\numberwithin{equation}{section} 
\numberwithin{figure}{section} 
\theoremstyle{plain}
\newtheorem{thm}{Theorem}
  \theoremstyle{plain}
  \newtheorem{cor}{Corollary}[section]
  \theoremstyle{plain}
  \newtheorem{lem}[cor]{Lemma}
  \theoremstyle{plain}
    \newtheorem{prop}[cor]{Proposition}
    \theoremstyle{remark}
       \newtheorem{rem}[cor]{Remark}
	 \theoremstyle{plain}
\theoremstyle{plain}
\DeclareMathOperator{\diam}{diam}
\renewcommand{\phi}{\varphi}
\renewcommand{\tilde}{\widetilde}
\def\diam {\mathop {\hbox{\rm diam}}}
\def\R{\mathbb{R}}
\def\N{\mathbb{N}}
\def\U{[0,1]}
\numberwithin{equation}{section} 
\numberwithin{figure}{section} 
  \theoremstyle{plain}
  \theoremstyle{plain}
\begin{document}

\title{On the derivative of the  $\alpha$-Farey-Minkowski function}

\author{Sara Munday}

\address{Fachbereich 3 - Mathematik und Informatik, Universität Bremen, Bibliothekstr.
1, D-28359 Bremen, Germany}

\email{smunday@math.uni-bremen.de}

\

\begin{abstract}
In this paper we  study the family of  $\alpha$-Farey-Minkowski functions $\theta_\alpha$, for an arbitrary countable partition $\alpha$
of the unit interval with atoms which accumulate only at the origin, which are the conjugating homeomorphisms between each of the $\alpha$-Farey systems and the tent map. We first show that each function $\theta_\alpha$ is singular with respect to the Lebesgue measure and then demonstrate that the unit interval can be written as the disjoint union of the following three sets: $\Theta_0:=\{x\in\U:\theta_\alpha'(x)=0\}, \ \Theta_\infty:=\{x\in\U:\theta_\alpha'(x)=\infty\}\ \text{ and } \Theta_\sim:=\U\setminus(\Theta_0\cup\Theta_\infty)$. The main result is that
\[
\dim_{\mathrm{H}}(\Theta_\infty)=\dim_{\mathrm{H}}(\Theta_\sim)=\sigma_\alpha(\log2)<\dim_{\mathrm{H}}(\Theta_0)=1,
\]
where $\sigma_\alpha(\log2)$ is the Hausdorff dimension of the level set $\{x\in \U:\Lambda(F_\alpha, x)=s\}$, where $\Lambda(F_\alpha, x)$ is the Lyapunov exponent of the map $F_\alpha$ at the point $x$. The proof of the theorem employs the multifractal formalism for $\alpha$-Farey systems.
\end{abstract}

\maketitle

\vspace{-10mm}

\section{Introduction and Statement of Results}

The aim of this paper is to study the family of  $\alpha$-Farey-Minkowski maps, which we denote by $\theta_\alpha$, where $\alpha:=\{A_n:n\in\N\}$ denotes a countable partition of the unit interval into  non-empty, right-closed and left-open intervals. These maps were first introduced in \cite{KMS}.  In that paper, the $\alpha$-Farey and $\alpha$-L\"uroth systems were also introduced and investigated. We will provide some details of these systems in Section 2, but let us simply mention now that for a given partition $\alpha$, the  $\alpha$-Farey-Minkowski map $\theta_\alpha$ is the conjugating homeomorphism between the $\alpha$-Farey map $F_\alpha$ and the tent map $T$. This means that $\theta_\alpha$ is a homeomorphism of the unit interval such that $\theta_\alpha\circ F_\alpha= T\circ \theta_\alpha$.

Our first result is that for every partition $\alpha$ (with the exception of the dyadic partition $\alpha_D$, which is defined by $\alpha_D:=\{(1/2^n, 1/2^{n-1}]:n\in\N\}$), if the derivative  $\theta_\alpha'(x)$ exists {\em in a generalised sense}, meaning that it either exists or we have that $\theta_\alpha'(x)=\infty$, then
\[
\theta_\alpha'(x)\in \{0, \infty\}.
\]
For the dyadic partition, since the map $F_{\alpha_D}$ can easily be seen to coincide with the tent map, the map $\theta_{\alpha_D}$ is nothing other than the identity map on $[0,1]$. We then show that from this it follows that for an arbitrary non-dyadic partition $\alpha$,
the map $\theta_\alpha$ is singular with respect to the Lebesgue measure $\lambda$.  In other words, we have that for $\lambda$-a.e. $x\in\U$, the derivative $\theta_\alpha'(x)$ exists and is equal to zero.
Consequently, the unit interval can be split into three pairwise disjoint sets $\Theta_0, \Theta_\infty$ and $\Theta_{\sim}$, which are defined as follows:
\[
\Theta_0:=\{x\in\U:\theta_\alpha'(x)=0\}, \ \Theta_\infty:=\{x\in\U:\theta_\alpha'(x)=\infty\}\ \text{ and } \Theta_\sim:=\U\setminus(\Theta_0\cup\Theta_\infty).
\]
It is immediate from the results stated above that
\[
\lambda(\Theta_0)=\dim_{\mathrm{H}}(\Theta_0)=1,
\]
where $\dim_{\mathrm{H}}(A)$ denotes the Hausdorff dimension of a  set $A\subseteq \R$.

For all the remaining results of the paper, we must restrict  the class of partitions to those that are either expanding or expansive of exponent $\tau\geq 0$ and eventually decreasing (the relevant definitions are given in Section 3 below). The first main result of the paper is concerned with relating the derivative of $\theta_\alpha$ to the sets $\mathcal{L}(s)$, which are defined as follows:
\[
\mathcal{L}(s):=\left\{x\in \U:\lim_{n\to\infty}\frac{\log(\lambda(I^{(\alpha)}_n(x)))}{-n}=s\right\},
\]
where $I_n^{(\alpha)}(x)$ refers to the unique $\alpha$-Farey cylinder set containing the point $x$  (see Section 2 for the precise definition).
These sets are only non-empty for $s$ inside the interval $[s_-, s_+]$, where $s_-:=\inf\{-\log(a_n)/n:n\in\N\}$ and $s_+:=\sup\{-\log(a_n)/n:n\in\N\}$. We obtain that if $s\in [s_-, \log2)$, then $\mathcal{L}(s)\subset \Theta_\infty$, whereas if $s\in(\log 2, s_+]$, then $\mathcal{L}(s)\subset \Theta_0$.
The significance of the $\log2$ is that this is the value of the topological entropy of each map $F_\alpha$.

The second main result of this paper is to employ the multifractal results obtained in \cite{KMS} to calculate the Hausdorff dimensions of the sets $\Theta_\infty$ and $\Theta_\sim$.
%
%
We have the following theorem.
\begin{thm}\label{mainthm}
\[
\dim_{\mathrm{H}}(\Theta_\infty)=\dim_{\mathrm{H}}(\Theta_\sim)=\mathcal{L}(\log2)<\dim_{\mathrm{H}}(\Theta_0)=1.
\]
\end{thm}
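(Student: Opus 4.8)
Throughout, write $\sigma_\alpha(s):=\dim_{\mathrm{H}}(\mathcal L(s))$ for the $\alpha$-Farey Lyapunov spectrum, so that the right-hand side of Theorem~\ref{mainthm} is $\sigma_\alpha(\log 2)$; put $u_n(x):=-\tfrac1n\log\lambda(I_n^{(\alpha)}(x))$, so $\mathcal L(s)=\{x\in\U:u_n(x)\to s\}$; and for an $F_\alpha$-invariant probability measure $\mu$ let $\Lambda(\mu):=\int\log|F_\alpha'|\,\mathrm d\mu$ be its Lyapunov exponent. The identity $\dim_{\mathrm{H}}(\Theta_0)=1$ is already recorded, and $\Theta_0,\Theta_\infty,\Theta_\sim$ partition $\U$, so the plan is to prove
\[
\sigma_\alpha(\log 2)<1,\qquad \dim_{\mathrm{H}}(\Theta_\infty)=\sigma_\alpha(\log 2),\qquad \dim_{\mathrm{H}}(\Theta_\sim)=\sigma_\alpha(\log 2).
\]
The first gives the strict inequality; the others, together with $\dim_{\mathrm{H}}(\Theta_\infty\cup\Theta_\sim)=\dim_{\mathrm{H}}(\U\setminus\Theta_0)$, will come from separate lower and upper bounds.

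For the strict inequality I would use the multifractal formalism for $\alpha$-Farey systems of \cite{KMS}: $\sigma_\alpha$ is continuous and unimodal on $[s_-,s_+]$, attains its maximal value $1$ exactly at $s=s_\ast$, the Lyapunov exponent of the $F_\alpha$-invariant measure $\mu_0$ in the Lebesgue class, and is non-increasing on $[s_\ast,s_+]$. It then suffices to check $s_\ast<\log 2<s_+$. The measure of maximal entropy of $F_\alpha$ is the pull-back $\mu_\ast:=(\theta_\alpha^{-1})_\ast\lambda$ of Lebesgue measure (the latter being the measure of maximal entropy of the tent map), so $h_{\mu_\ast}=\log 2=h_{\mathrm{top}}(F_\alpha)$; for non-dyadic $\alpha$ the map $\theta_\alpha$ is singular (first part of the paper), hence $\mu_\ast\perp\lambda$, and Ruelle's inequality --- whose equality case would force $\mu_\ast\ll\lambda$ --- gives $\Lambda(\mu_\ast)>\log 2$, whence $s_+\ge\Lambda(\mu_\ast)>\log 2$. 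On the other side $\mu_0\neq\mu_\ast$: in the expanding case $\mu_0$ is a finite absolutely continuous invariant measure, so by Rokhlin's formula $s_\ast=\Lambda(\mu_0)=h_{\mu_0}<h_{\mathrm{top}}(F_\alpha)=\log 2$; in the expansive--parabolic case the Lebesgue-class invariant measure is infinite, $\lambda$-almost every point has vanishing Lyapunov exponent, and $s_\ast=0=s_-<\log 2$. Either way $\sigma_\alpha(\log 2)<\sigma_\alpha(s_\ast)=1$.

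For the lower bounds, $\dim_{\mathrm{H}}(\Theta_\infty)\ge\sigma_\alpha(\log 2)$ is immediate from the inclusion $\mathcal L(s)\subseteq\Theta_\infty$ established above for $s\in(\log 2,s_+]$: the supremum of the continuous function $\sigma_\alpha$ over any set having $\log 2$ in its closure is $\ge\sigma_\alpha(\log 2)$. For $\Theta_\sim$ I would show that the dimension-maximising measure $\mu_{\log 2}$ on $\mathcal L(\log 2)$ --- an equilibrium state supplied by \cite{KMS} with $\dim_{\mathrm{H}}(\mu_{\log 2})=\sigma_\alpha(\log 2)$ --- gives full mass to $\Theta_\sim$. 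For $\mu_{\log 2}$-almost every $x$ one has $\Lambda(F_\alpha,x)=\log 2$, so $\log\bigl(2^{-n}/\lambda(I_n^{(\alpha)}(x))\bigr)=\sum_{j=0}^{n-1}\log|F_\alpha'|(F_\alpha^{j}x)-n\log 2$ is a sublinearly growing Birkhoff sum of a potential of $\mu_{\log 2}$-mean zero; since $\alpha$ is non-dyadic this potential is not cohomologous to a constant, hence has positive asymptotic variance, and by the central limit theorem (indeed the law of the iterated logarithm) for the Gibbs state $\mu_{\log 2}$ the sum has $\limsup=+\infty$ and $\liminf=-\infty$ almost surely. Transferring this oscillation of the cylinder ratios $2^{-n}/\lambda(I_n^{(\alpha)}(x))$ to the difference quotients of $\theta_\alpha$ at $x$ --- via the one-sided $\alpha$-Farey cylinders at $x$ and the renewal structure of the $\alpha$-Farey partition --- shows that $\theta_\alpha'(x)$ exists neither finitely nor as $+\infty$, so $x\in\Theta_\sim$. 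Hence $\mu_{\log 2}(\Theta_\sim)=1$ and $\dim_{\mathrm{H}}(\Theta_\sim)\ge\sigma_\alpha(\log 2)$.

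For the matching upper bound it is enough, since $\Theta_\infty\cup\Theta_\sim=\U\setminus\Theta_0$, to prove $\dim_{\mathrm{H}}(\U\setminus\Theta_0)\le\sigma_\alpha(\log 2)$. The crux is the inclusion
\[
\U\setminus\Theta_0\ \subseteq\ \bigl\{\,x\in\U:\ \textstyle\limsup_{n\to\infty}u_n(x)\ge\log 2\,\bigr\}\,;
\]
equivalently: if $x$ lies, for all large $n$, only in $\alpha$-Farey cylinders of length at least $\mathrm e^{-n(\log 2-\varepsilon)}$, then $\theta_\alpha'(x)=0$. This is the contrapositive of a quantitative comparison of $\theta_\alpha'(x)$ with the ratios $2^{-n}/\lambda(I_n^{(\alpha)}(x))$, obtained from the $\{0,\infty\}$-dichotomy of the earlier section together with a one-sided analysis of cylinder boundaries, as in the classical study of the Minkowski question-mark function. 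Granting it, fix $\varepsilon>0$ with $\log 2-\varepsilon>s_\ast$: the right-hand set is covered, for every $N$, by all $\alpha$-Farey cylinders of level $n\ge N$ and length at most $\mathrm e^{-n(\log 2-\varepsilon)}$; grouping these by the value of $u_n$ and bounding, via the thermodynamic formalism of \cite{KMS}, the number of level-$n$ cylinders with $u_n\approx t$ by $\mathrm e^{\,n\,t\,\sigma_\alpha(t)+o(n)}$ (a count that is negligible for $t>s_+$), the $d$-dimensional Hausdorff sum of this cover is dominated by
\[
\sum_{n\ge N}\ \sum_{t\ge\log 2-\varepsilon}\mathrm e^{\,n\,t(\sigma_\alpha(t)-d)+o(n)},
\]
which tends to $0$ as $N\to\infty$ whenever $d>\sup_{t\ge\log 2-\varepsilon}\sigma_\alpha(t)=\sigma_\alpha(\log 2-\varepsilon)$, the last equality because $\sigma_\alpha$ is non-increasing on $[s_\ast,s_+]$ and $\log 2-\varepsilon>s_\ast$. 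Letting $\varepsilon\to 0$ and using the continuity of $\sigma_\alpha$ gives $\dim_{\mathrm{H}}(\U\setminus\Theta_0)\le\sigma_\alpha(\log 2)$, and combined with the lower bounds this yields $\dim_{\mathrm{H}}(\Theta_\infty)=\dim_{\mathrm{H}}(\Theta_\sim)=\sigma_\alpha(\log 2)$. The step I expect to require the most work is precisely this passage between the analytic quantity $\theta_\alpha'(x)$ and the combinatorial quantity $\lambda(I_n^{(\alpha)}(x))$: as $\theta_\alpha$ is itself singular it is comparable to no affine map on cylinders, and $x$ may sit arbitrarily close to cylinder endpoints, so the heuristic $\theta_\alpha'(x)\asymp 2^{-n}/\lambda(I_n^{(\alpha)}(x))$ must be justified through a careful one-sided argument exploiting the renewal-type geometry of the $\alpha$-Farey cylinders; this underlies both the lower bound for $\Theta_\sim$ and the upper bound, and is also where the standing hypothesis on $\alpha$ --- that it be expanding, or expansive of exponent $\tau\ge 0$ and eventually decreasing --- together with the countable-alphabet thermodynamic estimates of \cite{KMS} are used.
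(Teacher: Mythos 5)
Your overall architecture is the same as the paper's: sandwich $\Theta_\infty$ and $\Theta_\sim$ between level sets of the cylinder-decay exponent using the comparison results of Section 3 (your ``crux'' inclusion $\U\setminus\Theta_0\subseteq\{\limsup_n u_n\geq\log 2\}$ is exactly the contrapositive of Proposition \ref{limitto0}, and the upper-bound covering argument is essentially the paper's proof of Proposition \ref{6.4}~(a)), then read off dimensions from the multifractal formalism of \cite{KMS} together with continuity of the spectrum at $\log 2$. You genuinely diverge in two places. For the strict inequality $\sigma_\alpha(\log 2)<1$ you argue via singularity of $\theta_\alpha$, Ruelle's inequality and Rokhlin's formula to place $\log 2$ strictly between $\Lambda(\mu_0)$ and $\Lambda(\mu_\alpha)$; the paper simply cites \cite[Theorem 3]{KMS}, and your argument, while morally right, imports smooth-ergodic machinery (the equality case of Ruelle, Rokhlin for an a.c.i.m.\ that is infinite in the expansive case with $\tau\leq 1$) that would need justification for these countable-branch piecewise-linear maps. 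The more substantive difference is the lower bound for $\dim_{\mathrm{H}}(\Theta_\sim)$: you push the single Gibbs measure $\mu_{\log 2}$ onto $\Theta_\sim$ via a CLT/LIL for the mean-zero potential $\log|F_\alpha'|-\log 2$, whereas the paper proves instead that $\dim_{\mathrm{H}}(\mathcal L(s_0,s_1))=\dim_{\mathrm{H}}(\mathcal L(s_1))$ for $s_0<\log 2<s_1$ (Proposition \ref{6.4}~(b)) by an explicit Moran-type construction: Egoroff sets for two Bernoulli measures $\mathbb P_{s_0},\mathbb P_{s_1}$, concatenated cylinder families $\mathcal D_k$, and a Kolmogorov-consistency measure satisfying the mass distribution principle. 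Your route is slicker when it applies, but it has a technical soft spot you should acknowledge: on the induced (L\"uroth) Bernoulli system the observable $G(\ell)=-\ell\log 2-\log a_\ell$ is unbounded, and the second-moment condition $\sum_n p_n(\log 2^{-n}/a_n)^2<\infty$ needed for the LIL is not automatic for arbitrary expanding/expansive partitions. This is repairable without extra hypotheses: for i.i.d.\ mean-zero non-degenerate increments with only a first moment, Chung--Fuchs/Hewitt--Savage already give $\limsup S_NG=+\infty$ and $\liminf S_NG=-\infty$ almost surely, and non-degeneracy of $G$ is exactly non-dyadicity of $\alpha$ (if $2^{-n}/a_n=e^{cn}$ for all $n$ then $\sum a_n=1$ forces $c=0$); combined with the contrapositive of Lemma \ref{lemma1}~(a) and Proposition \ref{0inf} this puts $\mu_{\log 2}$-a.e.\ point in $\Theta_\sim$. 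With that repair your argument is a correct and arguably shorter alternative to Proposition \ref{6.4}~(b); the paper's construction has the advantage of being elementary and self-contained, and of proving the stronger statement about $\mathcal L(s_0,s_1)$ for all admissible pairs $(s_0,s_1)$.
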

This theorem is proved by employing the results obtained for the Hausdorff dimension of the Lyapunov spectrum of $F_\alpha$ in \cite{KMS}, after first observing that the set $\mathcal{L}(s)$ coincides, up to a countable set of points, with the set $\{x\in [0,1]:\Lambda(F_\alpha, x)=s\}$, where $\Lambda(F_\alpha, x)$ refers to the Lyapunov exponent of the map $F_\alpha$ at the point $x$. All the necessary definitions and results are recalled at the start of Section 4.


\section{The $\alpha$-L\"uroth and  $\alpha$-Farey systems, and the function $\theta_\alpha$}

In this section, we wish to remind the reader of the definition and some basic properties of the $\alpha$-L\"uroth and $\alpha$-Farey systems, which were introduced in \cite{KMS} (let us also  mention that the $\alpha$-L\"uroth systems are a particular class of generalised L\"uroth system, as introduced in \cite{BBDK}).

Recall from the introduction that $\alpha:=\{A_n:n\in\N\}$ denotes a countably infinite partition of the unit interval $\U$, consisting of non-empty, right-closed and left-open intervals, and let $a_n:=\lambda(A_n)$ and $t_n:=\sum_{k=n}^\infty a_k$. It is assumed throughout that the elements of $\alpha$ are ordered from right to left, starting from $A_1$, and that these elements accumulate only at the origin. Then, for a given partition $\alpha$, the {\em $\alpha$-L\"uroth map} $L_\alpha:\U\to\U$ is defined to be
\[
L_{\alpha}(x):=
\left\{
	\begin{array}{ll}
	    ({t_n-x})/a_n & \text{ for }x\in A_n,\ n\in\N;\\
	  0 & \hbox{ if } x=0.
	\end{array}
      \right.
\]

Each map $L_\alpha$ allows us to obtain a representation of the numbers in $[0,1]$. We will refer to this expansion as the {\em $\alpha$-L\"uroth expansion}. As shown in \cite{KMS}, for each $x\in (0, 1]$, the finite or infinite sequence
$(\ell_k)_{k\geq1}$ of positive integers is determined by $L_{\alpha}^{k-1}(x) \in
A_{\ell_{k}}$, where the sequence terminates in $k$ if and only if  $L_{\alpha}^{k-1}(x)=t_n$, for some $n\geq2$.
Then the $\alpha$-L\"{u}roth expansion of  $x$ is
given as follows, where the sum is supposed to be finite if the
sequence is finite:
\[
x=
\sum_{n=1}^\infty(-1)^{n-1}\left(\textstyle\prod\limits_{i<n}a_{\ell_i}\right)
t_{\ell_n}=t_{\ell_1}-a_{\ell_1}t_{\ell_2}+a_{\ell_1}a_{\ell_2}t_{\ell_3}+\cdots.
\]
 In this situation we then write
$x=[ \ell_1, \ell_2,
\ell_3, \ldots]_{\alpha}$ for a point $x\in\U$ with an infinite $\alpha$-L\"uroth expansion and $x=[\ell_1, \ldots, \ell_k]_\alpha$ for a finite $\alpha$-L\"uroth expansion.
It is easy to see
that every infinite expansion is unique, whereas each
$x\in(0,1)$ with a finite $\alpha$-L\"uroth expansion can be expanded
in exactly two ways. Namely, one immediately verifies that $x=[\ell_1, \ldots, \ell_k,
1]_\alpha=[\ell_1,  \ldots, \ell_{k-1}, (\ell_k +1)]_\alpha$. By
analogy with continued fractions, for which a number is rational if
and only if it has a finite continued fraction expansion, we say that
$x\in\U$ is an \textit{$\alpha$-rational number} when $x$ has a
finite $\alpha$-L\"uroth expansion and say that $x$ is an \textit{$\alpha$-irrational
number} otherwise.

\vspace{1mm}

We will now define the cylinder sets associated with the map $L_\alpha$. For each $k$-tuple $(\ell_1, \ldots, \ell_k)$ of positive integers, define the {\em $\alpha$-L\"uroth cylinder set} $C_\alpha (\ell_1, \ldots, \ell_k)$ associated with  the $\alpha$-L\"{u}roth expansion
 to be
\[
C_\alpha (\ell_1, \ldots, \ell_k):=\{[ y_1, y_2, \ldots
]_\alpha:y_i=\ell_i\text{ for } 1\leq i\leq k\}.
\]
Observe that these sets are closed intervals with endpoints
given by $[\ell_1, \ldots, \ell_k]_\alpha$ and
$[\ell_1,
\ldots, (\ell_k +1)]_\alpha$. If $k$ is even, it follows  that $[\ell_1, \ldots, \ell_k]_\alpha$ is the left endpoint of this interval. Likewise, if $k$ is odd, $[\ell_1, \ldots, \ell_k]_\alpha$ is the right endpoint. For the Lebesgue measure of these sets we have that
\[
\lambda(C_\alpha(\ell_1, \ldots, \ell_k))=a_{\ell_1}\ldots a_{\ell_k}.
\]

\vspace{1mm}

Let us now recall some details of the $\alpha$-Farey map, $F_\alpha:\U\to\U$. For a given partition  $\alpha$, the map $F_{\alpha}:\U \to \U$ is given by
\[
F_{\alpha}(x):=\left\{
        \begin{array}{ll}
          (1-x)/a_1 & \hbox{if $x\in A_1$,} \\
          {a_{n-1}}(x-t_{n+1})/a_{n}+t_n & \hbox{if $x\in A_n$, for  $n\geq2$,}
\\
	  0 & \hbox{if $x=0$. }
        \end{array}
      \right.
\]

An example of the graph of  an $\alpha$-Farey and an $\alpha$-L\"uroth map is shown in Figure 2.1, for the specific example of the harmonic partition, $\alpha_H:=\{(1/(n+1), 1/n]:n\in\N\}$. For another specific example, consider the dyadic partition
$\alpha_D:=\left\{\left(1/2^{n},1/2^{n-1}\right]:n\in\N\right\}$. One can  immediately verify that
the map  $F_{\alpha_D}$ coincides with  the tent map $T:[0,1]\to[0,1]$, which is given by \[T(x):=\left\{
                                                                                                   \begin{array}{ll}
                                                                                                     2x, & \hbox{for $x\in[0,1/2)$;} \\
                                                                                                     2-2x, & \hbox{for $x\in [1/2, 1]$.}
                                                                                                   \end{array}
                                                                                                 \right.
\]
To see this,  it is enough to note that for each $n\in\N$ we have that $a_n=2^{-n}$ and $t_n=2^{-(n-1)}$.

\begin{figure}[ht!]
\begin{center}
\includegraphics[width=0.42\textwidth]{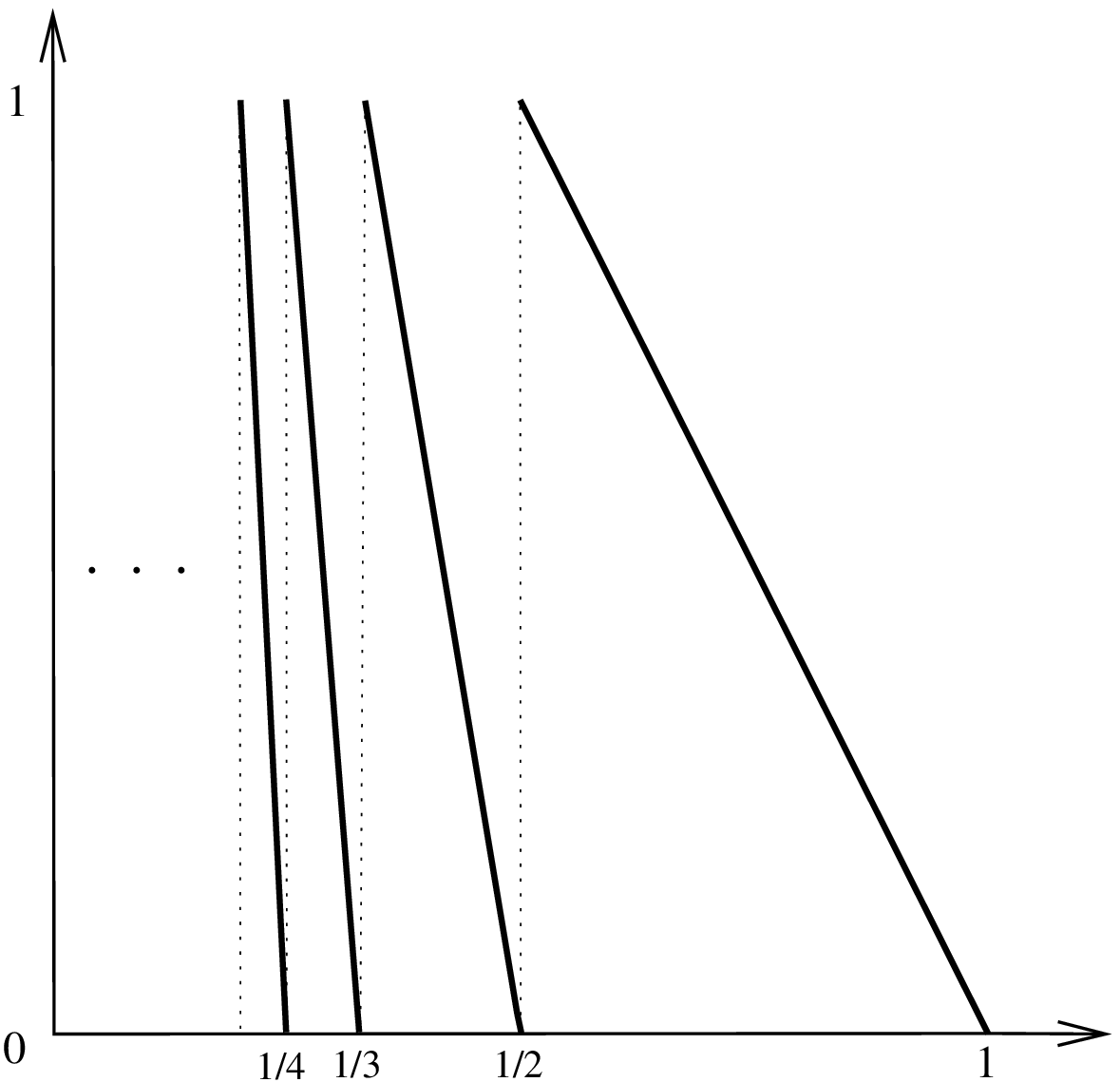}\hspace{0.1\textwidth}
\includegraphics[width=0.42\textwidth]{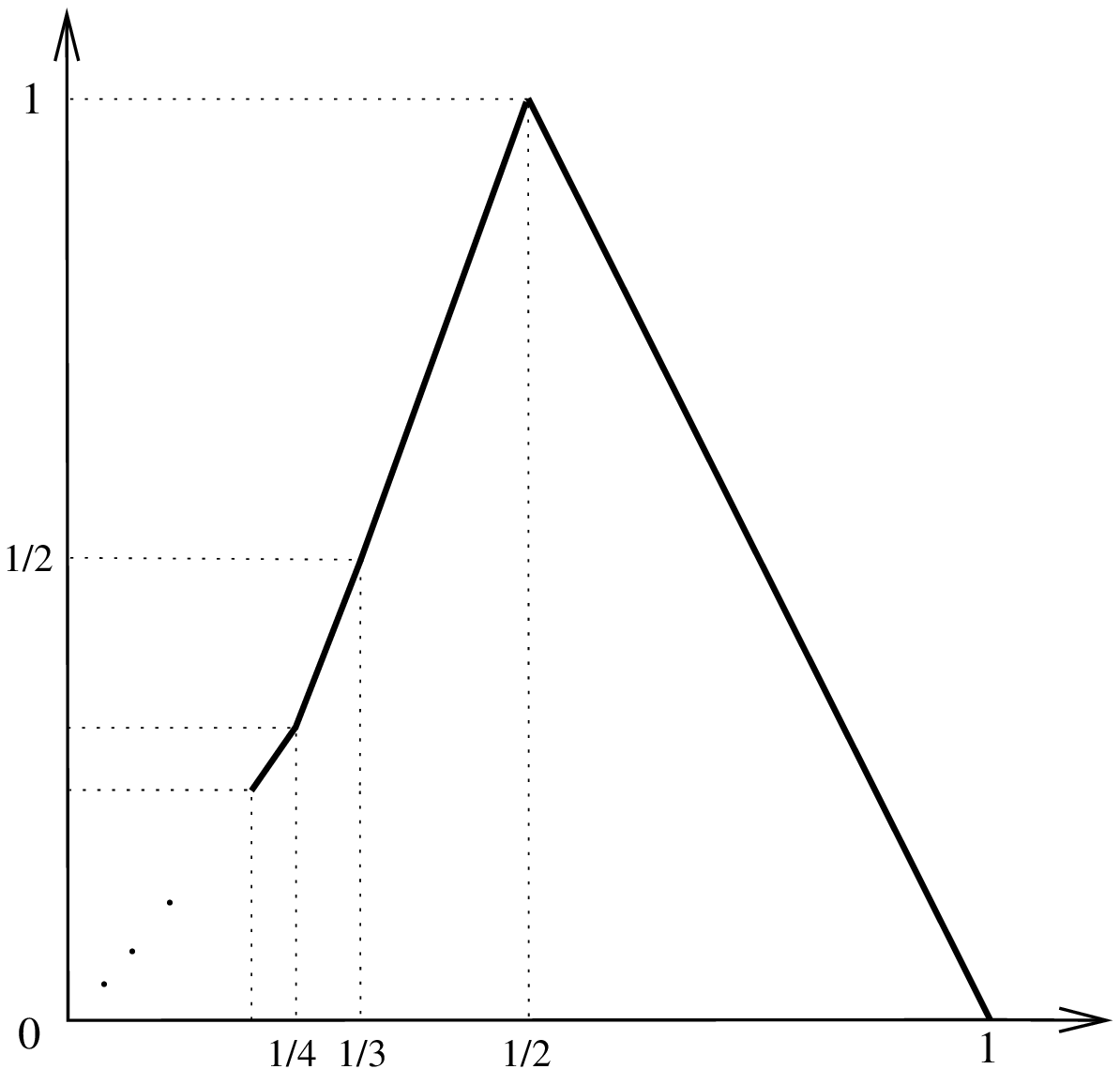}
\caption{The $\alpha_H$-L\"uroth and $\alpha_H$-Farey map, where $t_{n}=1/n$, $n\in \N$.}\end{center}
\end{figure}\label{fig:ClassicalLF}


Let us now describe how to construct a Markov partition ${\mathcal{A}}$ from the partition $\alpha$, and its associated coding for the map
$F_\alpha$. (For the definition of a Markov partition, see, for instance, \cite{tseng}.) The partition $\mathcal{A}$
is given by the closed intervals $\{A, B\}$, where $A:=\overline{A_1}$ and $B:={\U\setminus A_1}$. Each
$\alpha$-irrational number
in $\U$ has an infinite coding
$x=\langle x_{1}, x_{2},\ldots \rangle_{\alpha} \in \{0,1\}^{\N}$,
which is given by $x_k=1$ if and only if $F_{\alpha}^{k-1}(x)\in \mathrm{Int}(A)$
for each $k \in \N$.
This coding will be referred to
 as the $\alpha$-Farey coding.
If an $\alpha$-irrational number $x\in\U$ has $\alpha$-L\"uroth coding given by $x=[\ell_1, \ell_2, \ell_3, \ldots]_\alpha$, then the $\alpha$-Farey coding of $x$ is given by
$x=\langle0^{\ell_1-1},1,0^{\ell_2-1},1,0^{\ell_3-1},1,\ldots\rangle_\alpha$, where $0^{n}$ denotes the sequence of $n$ consecutive appearances of the symbol $0$, whereas for each $\alpha$-rational number $x=[\ell_1, \ell_2,\ldots, \ell_k]_\alpha$, one immediately verifies that this number has an $\alpha$-Farey coding given either by \[x=\langle 0^{\ell_1-1},1,0^{\ell_2-1},1,\ldots,0^{\ell_k-1},1,0,0,0,\ldots\rangle_\alpha\] or \[
x=\langle 0^{\ell_1-1},1,0^{\ell_2-1},1,\ldots,0^{\ell_k-2},1,1,0,0,0,\ldots\rangle_\alpha.\]
Let us now define the cylinder sets associated with the map $F_\alpha$. These coincide with the refinements $\mathcal{A}^n$ of the partition $\mathcal{A}$ for $F_\alpha$.
For each $n$-tuple $(x_1, \ldots, x_n)$ of positive integers, define the \textit{$\alpha$-Farey cylinder set }  $\widehat{C}_{\alpha}(x_{1},\ldots,x_{n})$ by setting
    \[
\widehat{C}_{\alpha}(x_{1},\ldots,x_{n}):=\{\langle y_{1},y_{2}, \ldots \rangle_{\alpha}: y_{k}=x_{k}, \text{ for }1\leq k\leq n\}.
\]
Notice that every $\alpha$-L\"uroth cylinder set is also an
$\alpha$-Farey cylinder set, whereas the converse of this statement is not true. The precise description of the correspondence is that   any
 $\alpha$-Farey cylinder set which has the form
$\widehat{C}_\alpha(0^{\ell_{1}-1},1,
\ldots ,0^{\ell_{k}-1},1)$ coincides with the
$\alpha$-L\"uroth cylinder set
$C_\alpha(\ell_1, \ldots, \ell_k)$ but if an
$\alpha$-Farey cylinder set is defined by a finite word ending in the symbol $0$, then it
 cannot be  translated to a single $\alpha$-L\"uroth cylinder set.  However, we do have the relation
\[
\widehat{C}_\alpha(0^{\ell_{1}-1},1,0^{\ell_{2}-1},1,
\ldots ,0^{\ell_{k}-1},1,0^{m})=\bigcup_{n\geq m+1}C_\alpha(\ell_1, \ell_2, \ldots, \ell_k, n).
\]
It therefore follows that for the Lebesgue measure of this interval we have that
\begin{eqnarray*}
\lambda(\widehat{C}_\alpha(0^{\ell_{1}-1},1,0^{\ell_{2}-1},1,
\ldots ,0^{\ell_{k}-1},1,0^{m}))&=& \sum_{n\geq m+1}\lambda(C_\alpha(\ell_1, \ell_2, \ldots, \ell_k, n))\\&=&a_{\ell_1}a_{\ell_2}\cdots a_{\ell_k}t_{m+1}.
\end{eqnarray*}
In addition, we can identify the endpoints of each $\alpha$-Farey cylinder set. If we consider the cylinder set  $\widehat{C}_\alpha(0^{\ell_{1}-1},1,\ldots, 0^{\ell_{k}-1},1)$, then we already know the endpoints of this interval (since it is also equal to an $\alpha$-L\"uroth cylinder set). On the other hand, the endpoints of the set $\widehat{C}_\alpha(0^{\ell_{1}-1},1,0^{\ell_{2}-1},1,\ldots,0^{\ell_{k}-1},1, 0^{m})$ are given by $[\ell_1, \ldots, \ell_{k}, m+1]_\alpha$ and $[\ell_1, \ldots, \ell_{k}]_\alpha$. 

The following result concerning the $\alpha$-Farey system and the tent system was obtained in \cite{KMS}. Before stating it, we remind the reader that
the measure of maximal entropy $\mu_{\alpha}$ for the system $F_\alpha$
is the measure that assigns mass $2^{-n}$ to each $n$-th level
$\alpha$-Farey cylinder set, for each $n\in\N$. Also, we recall that the distribution function $\Delta_\mu$ of a measure $\mu$ with support in $[0,1]$ is defined for each $x\in[0,1]$ by
   \[
\Delta_\mu(x):=\mu([0,x)).
\]

\vspace{1mm}

\begin{lem}[\cite{KMS}, Lemma 2.2]\label{KMSlem}
The dynamical systems $(\U, {F}_{\alpha})$ and
$(\U, T)$ are topologically conjugate and
the conjugating
homeomorphism is given, for each $x=[ \ell_1, \ell_2, \ldots]_{\alpha}$, by
\[
{\theta_{\alpha}}(x):=-2\sum_{k=1}^\infty(-1)^k2^{-\sum_{i=1}^k \ell_i}.
\]
Moreover, the map $\theta_{\alpha}$ is equal to the distribution function of
the measure of maximal entropy  $\mu_{\alpha}$  for the $\alpha$-Farey map.
\end{lem}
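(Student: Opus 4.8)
The plan is to recognise $\theta_\alpha$ as the map that ``keeps the digit string and replaces the underlying partition $\alpha$ by the dyadic partition $\alpha_D$'', and to read off both assertions of the lemma from this description. First I would note that the series in the statement is nothing but an $\alpha_D$-L\"uroth value: since $a_n^{(D)}=2^{-n}$ and $t_n^{(D)}=2^{-(n-1)}$, the L\"uroth formula of Section 2 gives, for any digit string $(\ell_1,\ell_2,\dots)$,
\[
[\ell_1,\ell_2,\dots]_{\alpha_D}=\sum_{n=1}^\infty(-1)^{n-1}\Big(\textstyle\prod_{i<n}2^{-\ell_i}\Big)2^{-(\ell_n-1)}=2\sum_{n=1}^\infty(-1)^{n-1}2^{-\sum_{i\le n}\ell_i}=-2\sum_{n=1}^\infty(-1)^{n}2^{-\sum_{i\le n}\ell_i},
\]
so that for $x=[\ell_1,\ell_2,\dots]_\alpha$ one has $\theta_\alpha(x)=[\ell_1,\ell_2,\dots]_{\alpha_D}$. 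The elementary properties of generalised L\"uroth expansions recalled in Section 2 hold verbatim for any partition, hence apply simultaneously to $\alpha$ and $\alpha_D$: infinite expansions are unique and describe exactly the irrational points; each $\alpha$-rational in $(0,1)$ has precisely two finite expansions, namely $[\ell_1,\dots,\ell_k,1]_\alpha=[\ell_1,\dots,\ell_{k-1},\ell_k+1]_\alpha$; the cylinders $C(\ell_1,\dots,\ell_k)$ are nested closed intervals of length $a_{\ell_1}\cdots a_{\ell_k}\to 0$; and the relative order of two points is governed by the first place their strings disagree, by a rule that does not depend on the partition. Granting these, $\theta_\alpha$ is a well-defined, strictly increasing, continuous bijection of $\U$ fixing $0$ and $1$, i.e.\ a homeomorphism, which carries $C_\alpha(\ell_1,\dots,\ell_k)$ onto $C_{\alpha_D}(\ell_1,\dots,\ell_k)$; combined with the dictionary between L\"uroth and Farey codings from Section 2 (again the same for every partition), it follows that $\theta_\alpha$ carries each level-$n$ $\alpha$-Farey cylinder $\widehat{C}_\alpha(x_1,\dots,x_n)$ onto $\widehat{C}_{\alpha_D}(x_1,\dots,x_n)$.

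I would then prove the intertwining $\theta_\alpha\circ F_\alpha=T\circ\theta_\alpha$ first on the $\alpha$-irrational points — a set whose complement is countable, hence dense — and extend it to all of $\U$ using that $F_\alpha$, $T$ and $\theta_\alpha$ are continuous. The key elementary fact is that $F_\alpha$ acts on $\alpha$-L\"uroth digit strings by ``delete the leading digit if it equals $1$, otherwise decrease it by $1$''. Indeed, if $x=[\ell_1,\ell_2,\dots]_\alpha$ with $\ell_1=1$ then $F_\alpha$ agrees with $L_\alpha$ on $A_1$, so $F_\alpha(x)=L_\alpha(x)=[\ell_2,\ell_3,\dots]_\alpha$; and if $\ell_1=n\ge 2$, writing $z:=L_\alpha(x)=[\ell_2,\ell_3,\dots]_\alpha$ we have $x=t_n-a_nz$, and substituting this into the second branch of $F_\alpha$ and using $t_n-t_{n+1}=a_n$ and $a_{n-1}+t_n=t_{n-1}$ gives $F_\alpha(x)=t_{n-1}-a_{n-1}z=[\,n-1,\ell_2,\ell_3,\dots]_\alpha$. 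Since $T=F_{\alpha_D}$ acts the same way on $\alpha_D$-L\"uroth strings and $\theta_\alpha$ preserves strings, both $\theta_\alpha(F_\alpha(x))$ and $T(\theta_\alpha(x))$ equal the $\alpha_D$-L\"uroth number with string $(\ell_2,\ell_3,\dots)$ if $\ell_1=1$ and $(\ell_1-1,\ell_2,\dots)$ if $\ell_1\ge 2$; one only has to check that the correct branch of the tent map is selected, and this is automatic, since an $\alpha$-irrational $x$ in the interior of $A_1$ satisfies $\theta_\alpha(x)\in\Int C_{\alpha_D}(1)=(1/2,1)$ while one in the interior of $A_n$ with $n\ge 2$ satisfies $\theta_\alpha(x)\in\Int C_{\alpha_D}(n)\subseteq(0,1/2)$. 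Unwinding the geometric series then gives $\theta_\alpha(F_\alpha(x))=2-2\theta_\alpha(x)=T(\theta_\alpha(x))$ in the first case and $\theta_\alpha(F_\alpha(x))=2\theta_\alpha(x)=T(\theta_\alpha(x))$ in the second.

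For the final clause I would show that $\theta_\alpha$ transports $\mu_\alpha$ to Lebesgue measure. Every level-$n$ $\alpha_D$-Farey cylinder has Lebesgue measure $2^{-n}$: this follows from the measure formula of Section 2, since $a_{m_1}^{(D)}\cdots a_{m_k}^{(D)}\,t_{m+1}^{(D)}=2^{-(m_1+\cdots+m_k+m)}$ and $m_1+\cdots+m_k+m$ is exactly the length of the defining word (equivalently, $T=F_{\alpha_D}$ expands by the constant factor $2$). As $\theta_\alpha$ maps $\widehat{C}_\alpha(x_1,\dots,x_n)$ onto $\widehat{C}_{\alpha_D}(x_1,\dots,x_n)$, we get $\lambda\big(\theta_\alpha(\widehat{C}_\alpha(x_1,\dots,x_n))\big)=2^{-n}=\mu_\alpha(\widehat{C}_\alpha(x_1,\dots,x_n))$ for every $\alpha$-Farey cylinder; since these generate the Borel $\sigma$-algebra of $\U$, the identity $\mu_\alpha(E)=\lambda(\theta_\alpha(E))$ holds for all Borel $E$. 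Taking $E=[0,x)$ and using that $\theta_\alpha$ is an increasing homeomorphism with $\theta_\alpha(0)=0$, so $\theta_\alpha([0,x))=[0,\theta_\alpha(x))$, we conclude $\Delta_{\mu_\alpha}(x)=\mu_\alpha([0,x))=\lambda\big([0,\theta_\alpha(x))\big)=\theta_\alpha(x)$.

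The only genuinely delicate point is the bookkeeping at the $\alpha$-rational numbers: one must check that the defining series does not depend on which of the two admissible finite expansions is used (which reduces to the same identity for $\alpha_D$) and that the monotonicity argument correctly tracks the orientation reversals caused by the alternating signs in the L\"uroth expansion, i.e.\ by the parity of the cylinder level. Everything else is a one-line manipulation of a geometric series or a direct substitution into the definitions of $F_\alpha$, $L_\alpha$ and $\theta_\alpha$.
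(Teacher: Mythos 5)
Your proposal is correct, but there is nothing in this paper to measure it against: Lemma~\ref{KMSlem} is imported verbatim from \cite{KMS} (Lemma 2.2) and no proof is given here, so what you have written is a self-contained substitute for the cited argument. Your organising idea --- that the series in the statement is exactly $[\ell_1,\ell_2,\ldots]_{\alpha_D}$, so that $\theta_\alpha$ is the digit-preserving change of partition from $\alpha$ to the dyadic partition --- is sound, and it reduces the two assertions to two partition-independent facts: every $\alpha$-Farey map acts on $\alpha$-L\"uroth strings by ``delete a leading $1$, otherwise decrement the leading digit'' (your computation $F_\alpha(x)=t_{n-1}-a_{n-1}L_\alpha(x)$ on $A_n$, $n\ge 2$, using $t_n-t_{n+1}=a_n$ and $a_{n-1}+t_n=t_{n-1}$, is right, as is the branch selection for $T$), and every level-$n$ $\alpha_D$-Farey cylinder has Lebesgue measure $2^{-n}$, which gives $\mu_\alpha(E)=\lambda(\theta_\alpha(E))$ on the $\pi$-system of Farey cylinders and hence $\Delta_{\mu_\alpha}=\theta_\alpha$. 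The places where you gesture rather than prove --- strict monotonicity of $\theta_\alpha$ via the parity-dependent but partition-independent ordering of L\"uroth cylinders, independence of the series from the choice of finite expansion at $\alpha$-rationals, and the continuity-plus-density extension of $\theta_\alpha\circ F_\alpha=T\circ\theta_\alpha$ from the $\alpha$-irrationals to all of $[0,1]$ --- are routine, are exactly the points you flag yourself, and do not constitute gaps; in a final write-up the monotonicity claim deserves a short induction on the level of the first digit disagreement, and the measure identification should invoke uniqueness of extension for finite measures agreeing on a generating $\pi$-system. The benefit of your route is that it makes the paper independent of the external reference and exhibits $\theta_\alpha$ concretely as the map that renormalises $F_\alpha$ to the tent map cylinder by cylinder, which is also the picture used implicitly throughout Sections 3 and 4.
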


Let us remark that  this map should be seen as an analogue of Minkowski's question-mark function, which was originally introduced by Minkowski \cite{min} in order to illustrate the Lagrange property of algebraic numbers of degree two. Indeed, all that is different in the definition of each is that in Minkowski's function the continued fraction entries appear and in the function $\theta_\alpha$, these are replaced by the $\alpha$-L\"uroth entries. For this reason,  we refer to the map $\theta_\alpha$ as the {\em $\alpha$-Farey-Minkowski function}.

\section{Differentiability properties of $\theta_\alpha$}

In this section we will give a series of simple lemmas that describe the differentiability properties of the function $\theta_\alpha$ for an arbitrary partition $\alpha$. The results turn out to match the results for the Minkowski question-mark function, although a little care must be taken when dealing with certain partitions. Most of the proofs here are modelled after the corresponding proofs in \cite{mink?}.  Before we begin, though, let us point out that (as mentioned above) the tent map itself is an example of an $\alpha$-Farey map, coming from the dyadic partition $\alpha_D:=\{(1/2^n, 1/2^{n-1}]:n\in\N\}$. Obviously, then, the map $\theta_{\alpha_D}$ which conjugates the map $F_{\alpha_D}$ and the tent map is simply the identity. So, in this case, the derivative of $\theta_{\alpha_D}$ is clearly identically equal to 1. So, in what follows,  unless otherwise stated, $\alpha$ is understood to be an arbitrary partition of the form detailed in the introduction but we also assume that $\alpha$ is {\em non-dyadic}, that is, we assume that $\alpha$ is not equal to the partition $\alpha_D$.

\vspace{1mm}

In order to state the first lemma, we must first make the following definition. For an $\alpha$-irrational number $x\in\U$ and for each $n\in\N$,
define the interval $I_n^{(\alpha)}(x)$ to be the unique $n$-th level $\alpha$-Farey cylinder set that contains the point $x$.  Let us also remind the reader here that we use the phrase ``exists in a generalised sense'' to mean ``exists or is equal to infinity''.

\begin{lem}\label{lemma1}
Suppose that $x\in[0,1]$ is such that $\theta_\alpha'(x)$ exists in a generalised sense. We then have that:
\begin{itemize}
  \item [(a)] If $x=[\ell_1, \ell_2, \ldots]_\alpha$ is an $\alpha$-irrational number, then
  \[
  \theta_\alpha'(x)=\lim_{n\to\infty}\frac{2^{-n}}{\lambda\left(I_n^{(\alpha)}(x)\right)}.
  \]
  \item [(b)] If $x=[\ell_1, \ldots, \ell_k]_\alpha$ is an $\alpha$-rational number, then
  \[
  \theta_\alpha'(x)=\frac{2\cdot 2^{-(\ell_1+\cdots +\ell_k)}}{a_{\ell_1}\ldots a_{\ell_k}}\lim_{m\to\infty}\frac{2^{-m}}{t_m}.
  \]
\end{itemize}
\end{lem}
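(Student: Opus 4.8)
The key observation is that $\theta_\alpha$ is the distribution function of the measure of maximal entropy $\mu_\alpha$ (Lemma \ref{KMSlem}), and $\mu_\alpha$ assigns mass $2^{-n}$ to every $n$-th level $\alpha$-Farey cylinder set. So the natural strategy is the standard one for derivatives of measure distribution functions: compare the increment of $\theta_\alpha$ across a small interval with the Lebesgue length of that interval. Concretely, if $\theta_\alpha'(x)$ exists in the generalised sense, then it must equal the limit of $(\theta_\alpha(y)-\theta_\alpha(z))/(y-z)$ along \emph{any} sequence of intervals $[z,y]\ni x$ shrinking to $x$; in particular I may take the nested sequence of $\alpha$-Farey cylinder sets $I_n^{(\alpha)}(x)$ containing $x$, whose lengths tend to $0$. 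Since $\theta_\alpha$ maps $I_n^{(\alpha)}(x)$ onto an interval of $\mu_\alpha$-mass (equivalently, Lebesgue length, since $\theta_\alpha$ pushes $\mu_\alpha$ to Lebesgue/the m.o.e.\ of the tent map) equal to $2^{-n}$, and $\lambda(I_n^{(\alpha)}(x))$ is exactly the length of $I_n^{(\alpha)}(x)$, the difference quotient of $\theta_\alpha$ over the endpoints of $I_n^{(\alpha)}(x)$ is precisely $2^{-n}/\lambda(I_n^{(\alpha)}(x))$. This yields (a) directly: the generalised derivative, if it exists, is forced to equal $\lim_{n\to\infty} 2^{-n}/\lambda(I_n^{(\alpha)}(x))$.

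For part (b), the point is that an $\alpha$-rational number $x=[\ell_1,\dots,\ell_k]_\alpha$ sits on the common boundary of a countable family of cylinders rather than in the interior of a nested sequence, so one must be a little more careful. Here I would use the explicit cylinder description from Section 2: the $\alpha$-Farey cylinder set $\widehat C_\alpha(0^{\ell_1-1},1,\dots,0^{\ell_k-1},1,0^m)$ has $x$ as an endpoint, has Lebesgue measure $a_{\ell_1}\cdots a_{\ell_k}\,t_{m+1}$, and corresponds to the $(\ell_1+\cdots+\ell_k+m)$-th level of the Markov refinement, hence has $\mu_\alpha$-mass $2^{-(\ell_1+\cdots+\ell_k+m)}$. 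These cylinders shrink to $x$ from one side as $m\to\infty$. Taking the difference quotient of $\theta_\alpha$ across $\widehat C_\alpha(0^{\ell_1-1},1,\dots,0^{\ell_k-1},1,0^m)$ gives
\[
\frac{2^{-(\ell_1+\cdots+\ell_k+m)}}{a_{\ell_1}\cdots a_{\ell_k}\,t_{m+1}}
= \frac{2^{-(\ell_1+\cdots+\ell_k)}}{a_{\ell_1}\cdots a_{\ell_k}}\cdot\frac{2^{-m}}{t_{m+1}},
\]
and, since the generalised derivative (if it exists) equals the limit of this along $m\to\infty$, after re-indexing $t_{m+1}\to t_m$ one gets the claimed formula, up to the harmless factor of $2$ coming from the exact normalisation convention in the definition of $\theta_\alpha$ (the ``$-2\sum$'' prefactor). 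I would also need to check the other side: the sibling cylinders approaching $x$ from the opposite side (those ending $\dots 0^{\ell_k-2},1,1,0^m$) give the same limit, which is automatic because a two-sided generalised derivative exists by hypothesis, so it suffices to compute one side.

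\textbf{The main obstacle.} The routine part is the arithmetic with the cylinder lengths and the entropy weights; the part that needs genuine care is the logical step ``if the generalised derivative exists, it is computed by this particular sequence of difference quotients.'' For an ordinary finite derivative this is immediate, but when $\theta_\alpha'(x)=\infty$ one must argue that the value $\infty$ is still recovered along the cylinder sequence — this uses that the intervals $I_n^{(\alpha)}(x)$ (resp.\ the one-sided cylinders in case (b)) contain $x$ and have diameter tending to zero, so they are admissible test intervals in the definition of the (generalised, possibly one-sided) derivative; some monotonicity of $\theta_\alpha$ makes the one-sided and two-sided versions compatible. A second small subtlety, specific to (b), is bookkeeping the index shift and the factor $2$ so that the stated formula matches the normalisation in Lemma \ref{KMSlem}; I would verify this on the simplest $\alpha$-rational points (e.g.\ $x=t_n$) as a sanity check. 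Neither of these is deep, so the lemma is essentially a careful unwinding of the identification $\theta_\alpha=\Delta_{\mu_\alpha}$ together with the cylinder dictionary of Section 2.
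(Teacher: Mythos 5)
Your proposal is correct and follows essentially the same route as the paper: part (a) is the mediant/convex-combination argument over the nested cylinders $I_n^{(\alpha)}(x)$ using $\theta_\alpha=\Delta_{\mu_\alpha}$ and $\mu_\alpha(I_n^{(\alpha)}(x))=2^{-n}$, and part (b) is the one-sided difference quotient along the approximants $[\ell_1,\ldots,\ell_k,m]_\alpha$ (your cylinders $\widehat C_\alpha(0^{\ell_1-1},1,\ldots,0^{\ell_k-1},1,0^m)$ are exactly the intervals between $x$ and these points, so the computations coincide, and the factor $2$ does come out correctly from the index shift $t_{m+1}\mapsto t_m$).
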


\begin{proof}
To prove part (a), let $x$ be an $\alpha$-irrational number such that $\theta_\alpha'(x)$ exists in a generalised sense. Then for every sequence $(y_n)_{n\geq1}$ decreasing or increasing to $x$, we have that
\[
\lim_{n\to\infty}\frac{\theta_\alpha(y_n)-\theta_\alpha(x)}{y_n-x}=\theta_\alpha'(x).
\]
In particular this holds if we consider the sequences of endpoints of the intervals $I_n^{(\alpha)}(x):=[L_n, R_n]$ which approach $x$ from the left and right, respectively. So, letting $A_n:=\theta_\alpha(x)-\theta_\alpha(L_n)$, $B_n:=x-L_n$, $C_n:=\theta_\alpha(R_n)-\theta_\alpha(x)$ and $D_n:=R_n-x$, we have that
\[
\lim_{n\to\infty}\frac{A_n}{B_n}=\lim_{n\to\infty}\frac{C_n}{D_n}=\theta_\alpha'(x).
\]
It then follows easily that
\[
\lim_{n\to\infty}\frac{2^{-n}}{\lambda(I_n^{\alpha}(x))}=\lim_{n\to\infty}\frac{\theta_\alpha(R_n)-\theta_\alpha(L_n)}{R_n-L_n}=\lim_{n\to\infty}\frac{A_n+C_n}{B_n+D_n}=\theta_\alpha'(x).
\]
This finishes the proof of part (a).

For part (b), let $x=[\ell_1, \ldots, \ell_k]_\alpha$ and again suppose that $\theta_\alpha'(x)$ exists in a generalised sense. Then, just as in the $\alpha$-irrational case above, for the sequence $([\ell_1, \ldots, \ell_k, m]_\alpha)_{m\geq1}$ which approaches the point $x$, we have that
\[
\lim_{m\to\infty}\frac{\theta_\alpha([\ell_1, \ldots, \ell_k, m]_\alpha)-\theta_\alpha([\ell_1, \ldots, \ell_k]_\alpha)}{[\ell_1, \ldots, \ell_k, m]_\alpha-[\ell_1, \ldots, \ell_k]_\alpha} =\lim_{m\to\infty}\frac{2\cdot 2^{-(\ell_1+\cdots\ell_k+m)}}{a_{\ell_1}\ldots a_{\ell_k}t_{m}}=\frac{2\cdot 2^{-(\ell_1+\cdots\ell_k)}}{a_{\ell_1}\ldots a_{\ell_k}}\lim_{m\to\infty}\frac{2^{-m}}{t_m}.
\]
\end{proof}

We now come to the question of the particular values the derivative of $\theta_\alpha$ may take, if it exists. The answer is given in Proposition \ref{0inf} below, but before we can get there we need the following two lemmas.

\begin{lem}\label{0infnot1/2}
Suppose that the partition $\alpha$ is such that $a_1\neq 1/2$. Let $x$ be an $\alpha$-irrational number with the property that $\theta_\alpha'(x)$ exists in a generalised sense. Then,
\[
\theta_\alpha'(x)\in\{0, \infty\}.
\]

\end{lem}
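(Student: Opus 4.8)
The plan is to combine Lemma~\ref{lemma1}(a) with the explicit product formulas for the Lebesgue measures of the $\alpha$-Farey cylinder sets recalled in Section~2. Write $x=[\ell_1,\ell_2,\ldots]_\alpha$ for the ($\alpha$-irrational) point in question, so that its $\alpha$-Farey coding is $x=\langle 0^{\ell_1-1},1,0^{\ell_2-1},1,\ldots\rangle_\alpha$ and, by Lemma~\ref{lemma1}(a),
\[
\theta_\alpha'(x)=L:=\lim_{n\to\infty}\frac{2^{-n}}{\lambda\!\left(I_n^{(\alpha)}(x)\right)}.
\]
If $L\in\{0,\infty\}$ there is nothing to prove, so I would argue by contradiction and suppose $0<L<\infty$. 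Setting $c_n:=2^{-n}/\lambda(I_n^{(\alpha)}(x))$, the identity $\lambda(I_{n+1}^{(\alpha)}(x))/\lambda(I_n^{(\alpha)}(x))=\tfrac12\,c_n/c_{n+1}$ together with $c_n\to L\in(0,\infty)$ shows that the ratios of consecutive cylinder lengths along the nested sequence $\big(I_n^{(\alpha)}(x)\big)_{n\in\N}$ converge to $\tfrac12$.

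Next I would determine which ratios can actually occur, focusing on the levels $n_i:=\ell_1+\cdots+\ell_i$, $i\in\N$: these are exactly the levels at which the coding word $x_1\cdots x_{n_i}$ ends in the symbol $1$, so that $I_{n_i}^{(\alpha)}(x)=\widehat{C}_\alpha(0^{\ell_1-1},1,\ldots,0^{\ell_i-1},1)$ coincides with the $\alpha$-L\"uroth cylinder $C_\alpha(\ell_1,\ldots,\ell_i)$ and has length $a_{\ell_1}\cdots a_{\ell_i}$. There are infinitely many such levels, because $x$ is $\alpha$-irrational. Appending the next coding symbol $x_{n_i+1}$ yields $I_{n_i+1}^{(\alpha)}(x)$, which is either $C_\alpha(\ell_1,\ldots,\ell_i,1)$ (when $x_{n_i+1}=1$, i.e.\ $\ell_{i+1}=1$), of length $a_{\ell_1}\cdots a_{\ell_i}\,a_1$, or $\widehat{C}_\alpha(0^{\ell_1-1},1,\ldots,0^{\ell_i-1},1,0)$ (when $x_{n_i+1}=0$), of length $a_{\ell_1}\cdots a_{\ell_i}\,t_2$. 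Hence the subsequence $\big(\lambda(I_{n_i+1}^{(\alpha)}(x))/\lambda(I_{n_i}^{(\alpha)}(x))\big)_{i\in\N}$ takes values in the two-element set $\{a_1,\,t_2\}=\{a_1,\,1-a_1\}$, using $t_2=t_1-a_1=1-a_1$.

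Being a subsequence of the sequence of consecutive-length ratios, it must also converge to $\tfrac12$; but a convergent sequence taking only finitely many values is eventually equal to its limit, so $\tfrac12\in\{a_1,1-a_1\}$, which forces $a_1=\tfrac12$ and contradicts the hypothesis $a_1\neq\tfrac12$. Therefore $L\notin(0,\infty)$, that is, $\theta_\alpha'(x)\in\{0,\infty\}$. The only step I would slow down on is the computation of the two admissible ``post-one'' ratios, which uses the identification of $\widehat{C}_\alpha(0^{\ell_1-1},1,\ldots,0^{\ell_i-1},1,1)$ with the $\alpha$-L\"uroth cylinder $C_\alpha(\ell_1,\ldots,\ell_i,1)$ and the length formula $\lambda(\widehat{C}_\alpha(\ldots,1,0^m))=a_{\ell_1}\cdots a_{\ell_i}t_{m+1}$; I do not anticipate any genuine obstacle, and the hypothesis $a_1\neq\tfrac12$ enters precisely because it keeps both admissible ratios, $a_1$ and $1-a_1$, away from the target value $\tfrac12$.
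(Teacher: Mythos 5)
Your proposal is correct and follows essentially the same route as the paper: apply Lemma \ref{lemma1}(a), assume $0<\theta_\alpha'(x)<\infty$ so that consecutive cylinder-length ratios tend to $2$ (equivalently, their reciprocals tend to $\tfrac12$), and then inspect the infinitely many levels $\ell_1+\cdots+\ell_i$ where the $\alpha$-Farey cylinder is an $\alpha$-L\"uroth cylinder, observing that the next-level ratio can only be $a_1$ or $t_2=1-a_1$, neither of which equals $\tfrac12$ when $a_1\neq\tfrac12$. The only difference from the paper is the cosmetic one of working with reciprocal ratios, so no further comment is needed.
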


\begin{proof}Let $x$ be an $\alpha$-irrational number and suppose that $\theta_\alpha'(x)$ exists  in a generalised sense.
By Lemma \ref{lemma1}, it follows that
\[
  \theta_\alpha'(x)=\lim_{n\to\infty}\frac{2^{-n}}{\lambda\left(I_n^{(\alpha)}(x)\right)}.
  \]
Suppose, by way of contradiction, that $\theta_\alpha'(x)=c$, for $0<c<\infty$. Then, it follows that
\[
\lim_{n\to\infty}\frac{2^{-n}}{\lambda\left(I_n^{(\alpha)}(x)\right)}\cdot\frac{\lambda\left(I_{n-1}^{(\alpha)}(x)\right)}{2^{-(n-1)}}=1
\]
and consequently,
\begin{eqnarray}\label{star}
\lim_{n\to\infty}\frac{\lambda\left(I_{n-1}^{(\alpha)}(x)\right)}{\lambda\left(I_n^{(\alpha)}(x)\right)}=2.
\end{eqnarray}
Since $x$ is an $\alpha$-irrational number, it follows that, infinitely often, the $n$-th $\alpha$-Farey cylinder set containing the point $x$ is also an $\alpha$-L\"uroth cylinder set. More specifically, where $x=[\ell_1, \ell_2, \ell_3, \ldots]_\alpha$,  these are the following sets:
\[
I_{\ell_1}^{(\alpha)}(x),I_{\ell_1+\ell_2}^{(\alpha)}(x), I_{\ell_1+\ell_2+\ell_3}^{(\alpha)}(x), \ldots.
\]
Recall that we have $\lambda\left(I_{\sum_{i=1}^n\ell_i}^{(\alpha)}(x)\right)=a_{\ell_1}\ldots a_{\ell_n}$. Furthermore,
\[
\lambda\left(I_{(\sum_{i=1}^n\ell_i)+1}^{(\alpha)}(x)\right):=\left\{
                                                                \begin{array}{ll}
                                                                  a_{\ell_1}\ldots a_{\ell_n}a_1, & \hbox{if $\ell_{n+1}=1$;} \\
                                                                  a_{\ell_1}\ldots a_{\ell_n}t_2, & \hbox{if $\ell_{n+1}>1$.}
                                                                \end{array}
                                                              \right.
\]
 Thus, for each $n\in\N$, the quotient $\lambda\left(I_{\sum_{i=1}^n\ell_i}^{(\alpha)}(x)\right)/\lambda\left(I_{(\sum_{i=1}^n\ell_i)+1}^{(\alpha)}(x)\right)$ is either equal to $1/a_1$ or $1/t_2$. Given that $a_1\neq1/2$, neither $1/a_1$ nor $1/t_2$ can be equal to 2. This contradicts (\ref{star}) and the proof is finished.
\end{proof}

The above proof is closely modelled on the corresponding result for the Minkowski question-mark function given in \cite{mink?}. The problem with the situation where $a_1=1/2$ can be overcome with the help of the next lemma.

\begin{lem}\label{0inf1/2}
Suppose that there exists some proper subset $M\subset\N$ such that for all $i\in M$, the partition $\alpha$ satisfies $a_i=2^{-i}$ and for all $i\in \N\setminus M$, we have that $a_i\neq 2^{-i}$. Define the sets
\[
B_{M, N}:=\{x\in[0,1]:\ell_i(x)\in M \text{ for all } i\geq N\} \ \text{ and }\  B_M:=\bigcup_{N\in\N}B_{M, N}.
\]
Then, if $x\in B_M$, we have that $\theta_\alpha'(x)$ does not exist.
\end{lem}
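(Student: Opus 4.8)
Membership in $B_{M,N}$ presupposes that $\ell_i(x)$ is defined for every $i\ge N$, so it is enough to treat $\alpha$-irrational $x$. Fix such an $x=[\ell_1,\ell_2,\ldots]_\alpha\in B_M$, choose $N_0$ with $\ell_i\in M$ for all $i\ge N_0$, and write $S_k:=\ell_1+\cdots+\ell_k$. Set $p:=\min(\N\setminus M)$; this is a well-defined positive integer because $M$ is a proper subset of $\N$. By the defining property of $M$ we have $a_m=2^{-m}$ for $1\le m<p$, hence $t_j=2^{-(j-1)}$ for $1\le j\le p$, whereas $a_p\ne2^{-p}$, so that $t_{p+1}=t_p-a_p\ne2^{-p}$. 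Since the $S_k$-th $\alpha$-Farey cylinder containing $x$ is the $\alpha$-L\"uroth cylinder $C_\alpha(\ell_1,\ldots,\ell_k)$ and $a_{\ell_i}=2^{-\ell_i}$ for $i\ge N_0$, one checks that, for all sufficiently large $k$,
\[
\frac{2^{-S_k}}{\lambda\!\left(I_{S_k}^{(\alpha)}(x)\right)}=\frac{2^{-S_{N_0-1}}}{a_{\ell_1}\cdots a_{\ell_{N_0-1}}}=:c_0\in(0,\infty),
\]
a constant that does not depend on $k$. Hence, by Lemma~\ref{lemma1}(a), if $\theta_\alpha'(x)$ exists in the generalised sense then necessarily $\theta_\alpha'(x)=c_0$. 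The plan is to contradict this in each of the two complementary cases.

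\textbf{Case A: $\ell_{k+1}\ge p+1$ for infinitely many $k$.} For each such $k$ the $(S_k+p)$-th $\alpha$-Farey cylinder containing $x$ is $\widehat{C}_\alpha(0^{\ell_1-1},1,\ldots,0^{\ell_k-1},1,0^{p})$ (a legitimate cylinder because $p\le\ell_{k+1}-1$), whose Lebesgue measure equals $a_{\ell_1}\cdots a_{\ell_k}\,t_{p+1}$ by the formula recalled in Section~2. Therefore
\[
\frac{2^{-(S_k+p)}}{\lambda\!\left(I_{S_k+p}^{(\alpha)}(x)\right)}=c_0\cdot\frac{2^{-p}}{t_{p+1}}\ne c_0,
\]
since $t_{p+1}\ne2^{-p}$. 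Thus $\lim_{n\to\infty}2^{-n}/\lambda(I_n^{(\alpha)}(x))$ does not exist, and by Lemma~\ref{lemma1}(a) neither does $\theta_\alpha'(x)$. (This is essentially the argument of Lemma~\ref{0infnot1/2}, carried out one cylinder level deeper.)

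\textbf{Case B: $\ell_{k+1}\le p$ for all $k\ge N_1$, for some $N_1\ge N_0$.} Since $\ell_{k+1}\in M$ and $p\notin M$, in fact $\ell_{k+1}\le q:=p-1$ for all $k\ge N_1$, so $q\ge1$. A key point is that $\theta_\alpha$ acts as the identity on the set of points all of whose $\alpha$-L\"uroth digits are $\le q$: for such $z=[m_1,m_2,\ldots]_\alpha$ one has $a_{m_i}=2^{-m_i}$ and $t_{m_i}=2^{-(m_i-1)}$, and comparing the series for $z$ and for $\theta_\alpha(z)$ term by term gives $\theta_\alpha(z)=z$. For $k\ge N_1$ let $\xi_k$ be the point obtained from $x$ by changing the single digit $\ell_{k+1}$ to $p$, i.e.\ $\xi_k=[\ell_1,\ldots,\ell_k,p,\ell_{k+2},\ell_{k+3},\ldots]_\alpha$; then $\xi_k\ne x$ and $\xi_k\to x$. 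Using the self-similarity of the identity map and of $\theta_\alpha$ over the L\"uroth cylinder $C_\alpha(\ell_1,\ldots,\ell_k)$, together with $\theta_\alpha(x^{(k+2)})=x^{(k+2)}$ for the common tail $x^{(k+2)}:=[\ell_{k+2},\ell_{k+3},\ldots]_\alpha$ (whose digits are all $\le q$), a direct computation yields
\[
\frac{\theta_\alpha(\xi_k)-\theta_\alpha(x)}{\xi_k-x}=c_0\cdot g(\ell_{k+1},x^{(k+2)}),\qquad
g(\mu,w):=\frac{(2^{-p}-2^{-\mu})(2-w)}{(2^{-(p-1)}-2^{-(\mu-1)})-(a_p-2^{-\mu})\,w},
\]
and consequently
\[
g(\mu,w)-1=\frac{(a_p-2^{-p})\,w}{(2^{-(p-1)}-2^{-(\mu-1)})-(a_p-2^{-\mu})\,w}.
\]
The denominator here is nonzero for every $k$ (it equals $\xi_k-x$ up to the nonzero factor $(-1)^k a_{\ell_1}\cdots a_{\ell_k}$) and is bounded in absolute value by a constant, while $x^{(k+2)}\ge t_{q+1}>0$ because all its digits are $\le q$; since $a_p\ne2^{-p}$, it follows that $|g(\ell_{k+1},x^{(k+2)})-1|$ stays bounded below by a positive constant independent of $k$. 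Hence the difference quotient $(\theta_\alpha(\xi_k)-\theta_\alpha(x))/(\xi_k-x)$ stays bounded away from $c_0$ although $\xi_k\to x$, which contradicts $\theta_\alpha'(x)=c_0$. Therefore $\theta_\alpha'(x)$ does not exist.

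\textbf{The main obstacle.} Case~B is the genuinely delicate one: there the quotients $2^{-n}/\lambda(I_n^{(\alpha)}(x))$ actually converge (to $c_0$), so Lemma~\ref{lemma1}(a) alone cannot expose the failure of differentiability, and one must return to the definition of the derivative along a carefully chosen competitor sequence. The heart of the matter is the tension between the rigidity of $\theta_\alpha$ on the ``dyadic'' subsystem $\{m_i\le q\}$, where it is the identity, and the single non-dyadic atom $a_p\ne2^{-p}$; concretely, the work lies in verifying that the auxiliary function $g$ stays uniformly away from $1$, and the point that makes this possible is that $x^{(k+2)}$ cannot approach $0$ --- which is precisely why the bound $\ell_{k+1}\le q$ established at the start of Case~B is needed.
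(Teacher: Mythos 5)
Your argument is correct, but it is organized rather differently from the paper's proof. The paper splits according to whether $a_1\neq 1/2$ or $a_1=1/2$, and in the latter case further according to whether $t_{k+1}=2^{-k}$ for every $k\in M$, with sub-subcases on $M$ bounded or unbounded, using competitor points of the form $[\ell_1,\ldots,\ell_{2n-1},k+2]_\alpha$; its remaining cases are subsequence arguments via Lemma~\ref{lemma1}(a), in the spirit of your Case~A (with $p=1$, comparing consecutive levels and using $t_2\neq 1/2$). You instead anchor everything at $p:=\min(\N\setminus M)$, which simultaneously gives $a_m=2^{-m}$ for $m<p$, $t_m=2^{-(m-1)}$ for $m\le p$ and $t_{p+1}\neq2^{-p}$, and split on whether the digits exceed $p$ infinitely often. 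This buys a unified Case~A (levels $\ell_1+\cdots+\ell_k$ versus $p$ steps further on), and in Case~B --- the delicate case where the cylinder ratios genuinely converge to $c_0$ --- a single one-digit perturbation $\ell_{k+1}\mapsto p$ with a quantitative bound replacing the paper's several subcases. I checked the algebra for $g$ and $g-1$ via the self-similarity $[\ell_1,\ldots,\ell_k,w_1,w_2,\ldots]_\alpha=[\ell_1,\ldots,\ell_k]_\alpha+(-1)^k a_{\ell_1}\cdots a_{\ell_k}[w_1,w_2,\ldots]_\alpha$ and its dyadic analogue for $\theta_\alpha$; it is correct, and your bounds (denominator of modulus at most $1$, $x^{(k+2)}\ge t_p=2^{-(p-1)}$) indeed give $|g-1|\ge|a_p-2^{-p}|\,2^{-(p-1)}>0$ uniformly in $k$. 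Your two cases are exhaustive because the digits eventually lie in $M$ and hence never equal $p$, and your restriction to $\alpha$-irrational $x$ matches the paper's implicit reading of $B_{M,N}$.
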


\begin{proof}

We will prove the lemma by considering two separate  cases. The first case is that $a_1\neq 1/2$, or, in other words, the set $M$ does not contain the number 1. Fix $N\in\N$ and suppose, by way of contradiction, that $x\in B_{M, N}$ and that the derivative $\theta_\alpha'(x)$ does exist  in a generalised sense. Then, by Lemma \ref{lemma1}, we know that
\[
\theta_\alpha'(x)=\lim_{n\to\infty}\frac{2^{-n}}{\lambda\left(I_n^{(\alpha)}(x)\right)}.
\]
Also, since $a_1\neq 1/2$, we know that $\ell_i\neq 1$ for all $i\geq N$. Therefore, in the $\alpha$-Farey coding for $x$, after $\sum_{i=1}^{N-1}\ell_i$ entries, every occurrence of a 1 is followed directly by a 0. Let us choose two subsequences from the sequence $\left(2^{-n}/\lambda\left(I_n^{(\alpha)}(x)\right)\right)_{n\geq1}$. For the first, pick out every $n\geq \sum_{i=1}^{N-1}\ell_i$ such that
the $\alpha$-Farey interval $I_n^{(\alpha)}(x)$ is also an $\alpha$-L\"uroth interval (that is, pick out the elements of the sequence that correspond to stopping at every 1 in the $\alpha$-Farey code of $x$). For the second, take the subsequence that corresponds to shifting the first subsequence by exactly one place forward. Therefore we have the following two sequences, which, according to Lemma \ref{lemma1} ought to have the same limit:
\[
\frac{2^{-\sum_{i=1}^{N-1}\ell_i}}{a_{\ell_1}\ldots a_{\ell_N}}\left(\frac{2^{-\ell_N}}{a_{\ell_N}}, \frac{2^{-(\ell_N+\ell_{N+1})}}{a_{\ell_N}a_{\ell_{N+1}}},\frac{2^{-(\ell_N+\ell_{N+1}+\ell_{N+2})}}{a_{\ell_N}a_{\ell_{N+1}}a_{\ell_{N+2}}},  \ldots \right)
\]
and
\[
\frac{2^{-\sum_{i=1}^{N-1}\ell_i}}{a_{\ell_1}\ldots a_{\ell_N}}\left(\frac{2^{-(\ell_N+1)}}{a_{\ell_N}t_2}, \frac{2^{-(\ell_N+\ell_{N+1}+1)}}{a_{\ell_N}a_{\ell_{N+1}}t_2}, \frac{2^{-(\ell_N+\ell_{N+1}+\ell_{N+2}+1)}}{a_{\ell_N}a_{\ell_{N+1}}a_{\ell_{N+2}}t_2},\ldots \right).
\]
However,  notice that since $a_{\ell_{N+m}}=2^{-\ell_{N+m}}$ for all $m\geq0$,  we have that
\begin{eqnarray}\label{star3}
\lim_{m\to\infty}\frac{2^{-(\ell_N+\cdots+\ell_{N+m})}}{a_{\ell_N}\ldots a_{\ell_{N+m}}}=1,
\end{eqnarray}
whereas,
\[
\lim_{m\to\infty}\frac{2^{-(\ell_N+\cdots+\ell_{N+m}+1)}}{a_{\ell_N}\ldots a_{\ell_{N+m}}t_2}=  \frac{2^{-1}}{t_2}\neq 1.
\]
Consequently the derivative of $\theta_\alpha$ at $x$ does not exist.

To finish the proof, we consider the case where $a_1=1/2$.
First notice that the argument in (\ref{star3}) obviously still holds whenever a point $x$ is such that eventually all the $\alpha$-L\"uroth entries lie in the set $M$. Without loss of generality, we suppose that every $\ell_i(x)\in M$. This implies, in light of Lemma \ref{lemma1}, that if the derivative $\theta_\alpha'(x)$ exists in a generalised sense, then it must be equal to 1. We must again consider two further cases. The first is the case that for every $k\in M$, not only does $a_k=2^{-k}$, but also $t_{k+1}=2^{-k}$. The second case is that this is no longer true, in other words, there exists $k\in M$ such that $t_{k+1}\neq 2^{-k}$.

Consider first the situation that $t_{k+1}=2^{-k}$ for all $k\in M$. It then follows from a simple calculation that if $x$ is such that every $\alpha$-L\"uroth digit $\ell_i(x)$ of $x$ belongs to the set  $M$, then $\theta_\alpha(x)=x$. Now suppose that $M$ is a bounded set, with largest element $k$. It therefore follows that $a_{k+1}\neq 2^{-(k+1)}$ and $t_{k+2}\neq 2^{-(k+1)}$.  Since all entries of $x$ lie in $M$ and, in particular, $\ell_{2n}\leq k$, the sequence $([\ell_1, \ldots, \ell_{2n-1}, k+2]_\alpha)_{n\geq1}$ tends to $x$ from above. Therefore, we have (provided that the limit exists),
\begin{eqnarray*}
\lim_{n\to \infty}\frac{\theta_\alpha([\ell_1, \ldots, \ell_{2n-1}, k+2]_\alpha)-\theta_\alpha(x)}{[\ell_1, \ldots, \ell_{2n-1}, k+2]_\alpha-x}&=&\lim_{n\to\infty}\frac{[\ell_{2n}, \ell_{2n+1}, \ldots]_\alpha-2^{-(k+1)}}{[\ell_{2n}, \ell_{2n+1}, \ldots]_\alpha-t_{k+2}}\\&=&
1+\left(t_{k+2}-2^{-(k+1)}\right)\lim_{n\to\infty}\frac{1}{[\ell_{2n}, \ell_{2n+1}, \ldots]_\alpha-t_{k+2}}\neq1.
\end{eqnarray*}
Thus, in this case we also have that the derivative of $\theta_\alpha$ at $x$ does not exist.
Suppose now that $M$ is unbounded. Then, there must exist a smallest integer $k\geq1$ such that $a_{k+1}\neq 2^{-(k+1)}$. From this, it follows that also $t_{k+2}\neq 2^{-(k+1)}$. If $x=[\ell_1, \ell_2, \ldots]_\alpha$ is such that eventually all the digits of $x$ lie in $M$ but are at most equal to $k$, we can show that the derivative of $\theta_\alpha$ at $x$ does not exist exactly as above, where $M$ was assumed to be bounded. So, suppose that there exists a subsequence $(\ell_{i_j})_{j\geq1}$ of the entries of $x$ with each $\ell_{i_j}\in M$ and $\ell_{i_j}>k+1$. Further, suppose that each of these entries appear in even positions (if odd, the proof can be easily modified accordingly). Consider the sequence
 $(A_{i_n}:=[\ell_1, \ldots, \ell_{i_n-1}, k+2]_\alpha)_{n\geq1}$, which tends to $x$ from below. We then obtain that
\[
\frac{\theta_\alpha(x)-\theta_\alpha(A_{i_n})}{x-A_{i_n}}=\frac{2\cdot 2^{-(k+2)}-[\ell_{i_n}, \ell_{i_n+1}, \ldots]}{t_{k+2}-[\ell_{i_n}, \ell_{i_n+1}, \ldots]}
\]
and so (if the limit exists at all), $\lim_{n\to \infty}\frac{\theta_\alpha(x)-\theta_\alpha(A_{i_n})}{x-A_{i_n}}\neq 1$.
Therefore, in this second subcase we have also shown that the derivative of $\theta_\alpha$ at $x$ does not exist. This finishes the proof of the first subcase.

Finally, we must consider the situation where there exists at least one $k\in M$ such that $t_{k+1}\neq 2^{-k}$. Observe that if $k\in M$ is such that $t_{k+1}\neq 2^{-k}$, it follows that $t_k\neq 2^{-(k-1)}$ also. Recall that since we are assuming that $a_1=1/2$, that $t_2=1/2$ and so this $k$ cannot be equal to 1. This case only needs special consideration whenever the $\alpha$-L\"uroth code of $x$ contains a sequence of entries $\ell_{i_j}(x)$ with $t_{\ell_{i_j}+1}\neq 2^{-\ell_{i_j}}$ for all $j\in \N$. Suppose that this holds and, where we have set $n_j:=\left(\sum_{k=1}^{i_j}\ell_{k}(x)\right)-1$,  consider the following limit (provided that it exists):
\[
\lim_{j\to\infty} \frac{2^{-n_j}}{\lambda\left(I_{n_j}^{(\alpha)}(x)\right)}=\lim_{j\to \infty} \frac{2^{-(\ell_{i_j}-1)}}{t_{\ell_{i_j}}}\neq 1.
\]
Therefore, in this final subcase, we have demonstrated that the derivative $\theta_\alpha'(x)$ does not exist. This finishes the proof.
\end{proof}

We now give the main result of this section.

\begin{prop}\label{0inf}
For an arbitrary non-dyadic partition $\alpha$, if $x\in[0,1]$ is  such that the derivative $\theta_\alpha'(x)$ exists in a generalised sense, then
\[
\theta_\alpha'(x)\in\{0, \infty\}.
\]
\end{prop}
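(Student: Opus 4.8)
The argument is by contradiction: assume $\theta_\alpha'(x)$ exists in the generalised sense and equals some finite $c>0$. I would first record that, $\alpha$ being non-dyadic, there is some $r\geq 1$ with $t_{r+1}\neq 2^{-r}$: otherwise $a_1=1-t_2=1/2$ and $a_n=t_n-t_{n+1}=2^{-n}$ for all $n\geq 2$, forcing $\alpha=\alpha_D$. Put $M:=\{i\in\N: a_i=2^{-i}\}$, a proper subset of $\N$, and split according to whether $x$ is $\alpha$-irrational or $\alpha$-rational.

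\emph{Case 1: $x=[\ell_1,\ell_2,\dots]_\alpha$ is $\alpha$-irrational.} If $x\in B_M$ then Lemma~\ref{0inf1/2} says $\theta_\alpha'(x)$ does not exist, a contradiction; so infinitely many $\ell_i$ lie outside $M$. By Lemma~\ref{lemma1}(a) the whole sequence $2^{-n}/\lambda(I_n^{(\alpha)}(x))$ converges to $c$; restricting to the $\alpha$-L\"uroth levels $n_j:=\ell_1+\dots+\ell_j$, where $\lambda(I^{(\alpha)}_{n_j}(x))=a_{\ell_1}\cdots a_{\ell_j}$, this reads $\prod_{i=1}^{j}(2^{-\ell_i}/a_{\ell_i})\to c$, hence $2^{-\ell_j}/a_{\ell_j}\to 1$. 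If the digits $\ell_j$ are bounded, some $v\notin M$ recurs among them, so $2^{-\ell_j}/a_{\ell_j}=2^{-v}/a_v\neq 1$ infinitely often --- contradiction. If the digits are unbounded, then $\ell_j\geq r+1$ for infinitely many $j$, and for such $j$ the level-$(n_{j-1}+r)$ Farey cylinder containing $x$ is $\widehat C_\alpha(0^{\ell_1-1},1,\dots,0^{\ell_{j-1}-1},1,0^{r})$, of Lebesgue measure $a_{\ell_1}\cdots a_{\ell_{j-1}}t_{r+1}$; thus along these levels $2^{-n}/\lambda(I^{(\alpha)}_n(x))=\bigl(\prod_{i<j}2^{-\ell_i}/a_{\ell_i}\bigr)\cdot 2^{-r}/t_{r+1}\to c\cdot 2^{-r}/t_{r+1}\neq c$, again contradicting Lemma~\ref{lemma1}(a). (When $a_1\neq 1/2$ the first subcase is of course already subsumed by Lemma~\ref{0infnot1/2}.)

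\emph{Case 2: $x=[\ell_1,\dots,\ell_k]_\alpha$ is $\alpha$-rational} (the endpoints $x=0$ and $x=1$ being handled by the same computation with the obvious conventions). By Lemma~\ref{lemma1}(b), $c$ finite and positive forces $d:=\lim_{m}2^{-m}/t_m$ to exist in $(0,\infty)$, whence also $a_m/t_m=1-t_{m+1}/t_m\to 1/2$. Fix $n\geq 2$ with $t_n\neq 2^{1-n}$ and set $z_m:=[\ell_1,\dots,\ell_k,m,n]_\alpha$, so that $z_m\to x$ as $m\to\infty$. Using the $\alpha$-L\"uroth expansion formula together with the formula for $\theta_\alpha$, one computes
\[
\frac{\theta_\alpha(z_m)-\theta_\alpha(x)}{z_m-x}=\frac{2\cdot 2^{-(\ell_1+\dots+\ell_k)}(1-2^{-n})\,2^{-m}}{a_{\ell_1}\cdots a_{\ell_k}\,(t_m-a_m t_n)}\;\xrightarrow[m\to\infty]{}\;c\cdot\frac{2(1-2^{-n})}{2-t_n},
\]
and the right-hand side equals $c$ only when $t_n=2^{1-n}$. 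This contradicts $\theta_\alpha'(x)=c$, and completes the proof.

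\textbf{Main obstacle.} The one place where a contradiction is not simply handed to us by Lemma~\ref{lemma1} is the $\alpha$-irrational bounded-digit case with $x\in B_M$: there the difference quotients along cylinders can genuinely converge to a finite positive limit, so the only way I see to exclude it is to invoke the non-existence of the derivative established, through a somewhat intricate case analysis, in Lemma~\ref{0inf1/2}. Everything else is routine bookkeeping: the Farey-versus-L\"uroth cylinder dictionary, the non-dyadic choice of $r$ (respectively $n$), and the check that the subsequential limits of difference quotients above are taken along genuine sequences converging to $x$ (where the monotonicity of $\theta_\alpha$ is convenient if one wishes to upgrade ``along a subsequence'' to ``along every sequence'').
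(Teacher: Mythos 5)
Your proof is correct, and while it rests on the same two pillars as the paper's argument --- Lemma \ref{lemma1} (the cylinder-ratio formula for the generalised derivative) and Lemma \ref{0inf1/2} (non-existence of the derivative on $B_M$) --- the case analysis is organised genuinely differently. The paper first disposes of $a_1\neq 1/2$ via Lemma \ref{0infnot1/2}, and then, for $a_1=1/2$ and $x\notin B_M$, works with the smallest index $k\notin M$, splitting into the subcase where the digits lie in $M\cup\{k\}$ (so that the limit along L\"uroth levels is $\lim_n(2^{-k}/a_k)^n\in\{0,\infty\}$) and the subcase with infinitely many digits exceeding $k$, which is settled by the self-referential identity $\theta_\alpha'(x)=(2^{-k}/t_{k+1})\theta_\alpha'(x)$. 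You instead fix once and for all an index $r$ with $t_{r+1}\neq 2^{-r}$ (your derivation of its existence from non-dyadicity is the same observation the paper makes for $t_{k+1}\neq 2^{-k}$) and split on bounded versus unbounded digit sequences: in the bounded case the ratio of consecutive L\"uroth-level partial products forces $2^{-\ell_j}/a_{\ell_j}\to1$, against a recurring digit $v\notin M$ with $2^{-v}/a_v\neq 1$; in the unbounded case you compare the L\"uroth levels with the intermediate Farey level $n_{j-1}+r$, whose cylinder has measure $a_{\ell_1}\cdots a_{\ell_{j-1}}t_{r+1}$, obtaining two subsequential limits of $2^{-n}/\lambda(I_n^{(\alpha)}(x))$ that differ by the factor $2^{-r}/t_{r+1}\neq1$. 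This buys a shorter argument that needs neither Lemma \ref{0infnot1/2} nor the identity trick. You are also more careful than the paper on the $\alpha$-rational case: the paper dismisses it with a one-line appeal to Lemma \ref{lemma1}~(b), which by itself only pins the putative derivative to a positive multiple of $\lim_m 2^{-m}/t_m$ and does not exclude that this limit is finite and positive (which can happen, e.g.\ when $t_m$ is eventually a non-dyadic constant times $2^{-m}$); your computation with the approximants $z_m=[\ell_1,\ldots,\ell_k,m,n]_\alpha$, giving the second subsequential limit $c\cdot 2(1-2^{-n})/(2-t_n)\neq c$ whenever $t_n\neq 2^{1-n}$, closes exactly that gap.
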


\begin{proof}
In light of part (b) of Lemma \ref{lemma1}, it suffices to consider $\alpha$-irrational numbers. If the partition $\alpha$ is such that $a_1\neq1/2$, then the result follows by Lemma \ref{0infnot1/2}. It therefore only remains to consider the situation where $a_1=1/2$.  In this case  we can say that there exists a proper subset $M\subset \N$, which at least contains the point $1$, such that $a_i=2^{-i}$ for all $i\in M$. Since we do not allow $\alpha$ to be the dyadic partition, there exist at least two indices $i\in \N$ such that $a_i\neq 2^{-i}$.  By Lemma \ref{0inf1/2}, we know that if $x$ belongs to the set $B_M$ (which is defined in the statement of said lemma), then the derivative of $\theta_\alpha$ does not exist at the point $x$. This further reduces the remaining work to the situation where $x\notin B_M$.

To complete the proof, then, we consider a further two subcases. For the first, suppose that $k$ is the smallest integer such that $a_k\neq 2^{-k}$ and let $x\in[0,1]$ be such that $\theta_\alpha'(x)$ exists in a generalised sense and $\ell_i(x)\in M \cup \{k\}$, with infinitely many of the $\ell_i$ equal to $k$. Then, once again we use Lemma \ref{lemma1} to obtain
\[
\theta_\alpha'(x)=\lim_{n\to\infty}\frac{2^{-n}}{\lambda\left(I_n^{(\alpha)}(x)\right)}.
\]
In particular,
\[
\theta_\alpha'(x)=\lim_{n\to\infty}\frac{2^{-\sum_{i=1}^n\ell_i}}{a_{\ell_1}\ldots a_{\ell_{n}}}.
\]
However, for each $\ell_i(x)$ such that $\ell_i\in M$, we have that $2^{-\ell_i}/a_{\ell_i}=1$, so the above limit reduces to
\[
\lim_{n\to\infty} \left(\frac{2^{-k}}{a_k}\right)^n = \left\{
                                                        \begin{array}{ll}
                                                          0, & \hbox{if $2^{-k}<a_k$;} \\
                                                          \infty, & \hbox{$2^{-k}>a_k$.}
                                                        \end{array}
                                                      \right.
\]
Thus, in this first subcase, given that we assume that $a_k\neq 2^{-k}$,  if the derivative of $\theta_\alpha$ exists in a generalised sense at $x$, it must lie in the set $\{0, \infty\}$.

For the second subcase, again suppose that $k$ is the smallest integer such that $a_k\neq 2^{-k}$ and observe that this also means that $t_{k+1}\neq 2^{-k}$. Now suppose that $x$ is such that infinitely often $\ell_i(x)>k$ and that also infinitely many of the $\alpha$-L\"uroth entries of $x$ are outside the set $M$ (in case there are further indices $k+n$ with $a_{k+n}=2^{-(k+n)}$, since if the entries of $x$ would stay all but finitely often in $M$, we are back to the situation of Lemma \ref{0inf1/2}). Then, just as above, we have that
\[
\theta_\alpha'(x)=\lim_{n\to\infty}\frac{2^{-\sum_{i=1}^n\ell_i}}{a_{\ell_1}\ldots a_{\ell_{n}}}.
\]
Since each time $\ell_i\in M$, we are only multiplying by 1 in this sequence, without loss of generality, we may suppose that $\ell_i\notin M$ for all $i\in \N$. Given this assumption, we can write the $\alpha$-L\"uroth code for $x$ in the following way:
\[
x=[\underbrace{k, \ldots, k}_{n_1 \mbox{\scriptsize{ times}}}, \ell_{i_1},\underbrace{k, \ldots, k}_{n_2 \mbox{\scriptsize{ times}}}, \ell_{i_2}, \ldots, \underbrace{k, \ldots, k}_{n_j \mbox{\scriptsize{ times}}}, \ell_{i_j}, k, \ldots]_\alpha,
\]
where $n_j\in \N\cup\{0\}$ and  $\ell_{i_j}\geq k+1$, for all $j\in\N$. We then obtain the limit
\[
\theta_\alpha'(x)=\lim_{j\to\infty}\frac{2^{-((n_1+\cdots n_j)k+(\ell_{i_1}+\cdots+\ell_{i_{j-1}})+k)}}{(a_k)^{n_1+\cdots n_j}a_{\ell_{i_1}}\ldots a_{\ell_{i_{j-1}}}t_{k+1}}=\frac{2^{-k}}{t_{k+1}}\lim_{m\to \infty}\frac{2^{-\sum_{i=1}^m\ell_i}}{a_{\ell_1}\ldots a_{\ell_{m}}},
\]
where $m=(n_1+1)+(n_2+1)+\cdots + (n_{j-1}+1)+n_j$. Thus, we have that
\[
\theta_\alpha'(x)=\frac{2^{-k}}{t_{k+1}}\theta_\alpha'(x).
\]
It therefore follows that $\theta_\alpha'(x)$, whenever it exists in a generalised sense, belongs to the set $\{0, \infty\}$, since $2^{-k}/t_{k+1}\neq1$. This finishes the proof.
\end{proof}

We now give the result that for each non-dyadic partition $\alpha$, the $\alpha$-Farey-Minkowski function is singular with respect to the Lebesgue measure. Recall that this means that the derivative of $\theta_\alpha$ is Lebesgue-a.e. equal to zero. This mirrors the well-known result of Salem \cite{Salem} that Minkowski's question-mark function is singular.

\begin{prop}\label{singular}
For an arbitrary non-dyadic partition $\alpha$, the function $\theta_\alpha$ is singular with respect to the Lebesgue measure.
\end{prop}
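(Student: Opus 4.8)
The plan is to deduce singularity directly from Proposition~\ref{0inf} together with the classical differentiation theory of monotone functions, with essentially no further computation. By Lemma~\ref{KMSlem}, $\theta_\alpha$ is the distribution function $\Delta_{\mu_\alpha}$ of the measure of maximal entropy $\mu_\alpha$ of $F_\alpha$; being a homeomorphism of $\U$ it is continuous and strictly increasing. First I would invoke Lebesgue's theorem on the differentiability of monotone functions: the (two-sided, finite) derivative $\theta_\alpha'(x)$ exists for $\lambda$-almost every $x\in\U$.

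Next, at each such point $x$ the derivative in particular exists in the generalised sense, so Proposition~\ref{0inf} applies and gives $\theta_\alpha'(x)\in\{0,\infty\}$; since $\theta_\alpha'(x)$ is finite, necessarily $\theta_\alpha'(x)=0$. Combining the two observations, $\theta_\alpha'(x)=0$ for $\lambda$-a.e. $x\in\U$.

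Finally I would translate this into the statement $\mu_\alpha\perp\lambda$, which is precisely what it means for $\theta_\alpha$ to be singular with respect to $\lambda$. This is the standard fact that a continuous non-decreasing function on $\U$ is singular exactly when its derivative vanishes Lebesgue-almost everywhere: writing $\mu_\alpha=\mu_\alpha^{\mathrm{ac}}+\mu_\alpha^{\mathrm{s}}$ for the Lebesgue decomposition with respect to $\lambda$, the Lebesgue differentiation theorem identifies the Radon--Nikodym density of $\mu_\alpha^{\mathrm{ac}}$ with $\lim_{h\to 0}\mu_\alpha([x,x+h))/h=\theta_\alpha'(x)$ for $\lambda$-a.e. $x$; hence this density is $\lambda$-a.e. equal to $0$, i.e. $\mu_\alpha^{\mathrm{ac}}=0$.

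There is no genuine obstacle here: the substantive work has already been carried out in Proposition~\ref{0inf}, and the only point requiring (minor) care is the last step, where one must recall that $\theta_\alpha'=0$ $\lambda$-a.e. is not merely a consequence of, but also sufficient for, singularity of a monotone distribution function (this uses $\int_\U\theta_\alpha'\,\dd\lambda\le\theta_\alpha(1)-\theta_\alpha(0)$ together with the Lebesgue decomposition recalled above). Alternatively one could argue ergodic-theoretically by comparing $\mu_\alpha$ with any $F_\alpha$-absolutely continuous invariant measure, but the differentiation argument above is shorter and entirely self-contained given Proposition~\ref{0inf}.
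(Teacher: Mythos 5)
Your argument is correct and follows essentially the same route as the paper: strict monotonicity of $\theta_\alpha$ plus Lebesgue's theorem on monotone functions give a finite derivative $\lambda$-almost everywhere, and the dichotomy $\theta_\alpha'(x)\in\{0,\infty\}$ (you cite Proposition \ref{0inf}; the paper cites Lemmas \ref{0infnot1/2} and \ref{0inf1/2}) forces $\theta_\alpha'=0$ $\lambda$-a.e. Your closing step identifying this with $\mu_\alpha\perp\lambda$ is correct but not needed in the paper, which takes the $\lambda$-a.e. vanishing of the derivative as the definition of singularity.
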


\begin{proof}
In light of Lemma \ref{KMSlem}, we know that each function $\theta_\alpha$ is strictly increasing. Therefore, by a classical theorem (see, for instance, Theorem 5.3 in \cite{royden}), the derivative of $\theta_\alpha$ exists and is in particular finite $\lambda$-almost everywhere. Therefore, by Lemmas \ref{0inf1/2} and \ref{0infnot1/2}, it follows that the derivative is equal to 0 for $\lambda$-a.e. $x\in\U$.
\end{proof}

From this point on, we have to assume a little more information about the partitions $\alpha$. We will henceforth assume that all partitions $\alpha$ are  either {\em expansive of exponent $\tau>0$}, or, {\em expanding}.
Recall the definitions from \cite{KMS}: A partition $\alpha$ is said to be expansive of exponent $\tau>0$ if for the tails
of $\alpha$ we have that $t_n=n^{-\tau}\psi(n)$, for some slowly-varying\footnote{A measurable function $f:\R^{+} \to \R^{+}$ is said
to be
{\em slowly varying}  if $
\lim_{x\to\infty}f(x y)/f(x)=1$, for all $y>0$.} function $\psi:\N\to\R^+$, whereas $\alpha$ is said to be expanding if $\lim_{n\to \infty} t_{n}/t_{n+1}= \rho$, for some $\rho>1$.

Before stating the next proposition, let us define the $k$-th approximant $r_k^{(\alpha)}(x)$ to a point $x=[\ell_1,\ell_2,\ldots]_\alpha$ by setting $r_k^{(\alpha)}(x):=[\ell_1, \ldots, \ell_k]_\alpha$. We will use the notation $[a, b]_\pm$ to indicate that we either have the interval $[a, b]$ or the interval $[b, a]$, depending on which number is larger.  Finally, recall that the measure $\mu_\alpha$ by definition  gives mass $2^{-n}$ to each $n$-th level $\alpha$-Farey cylinder set.

\begin{prop}\label{liminfderinf}
Let $\alpha$ be a partition that is either { expansive of exponent $\tau>0$}  or { expanding}. Suppose that $x$ is such that
\[
\lim_{k\to\infty}\frac{\mu_\alpha\left(\left[r_k^{(\alpha)}(x), r_{k+1}^{(\alpha)}(x)\right]_{\pm}\right)}{\lambda\left(\left[r_k^{(\alpha)}(x), r_{k+1}^{(\alpha)}(x)\right]_{\pm}\right)}=\infty.
\]
Then $\theta_\alpha'(x)=\infty$.
\end{prop}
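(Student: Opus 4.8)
The plan is to verify directly the characterisation of the derivative: since, by Lemma~\ref{KMSlem}, $\theta_\alpha$ is a strictly increasing homeomorphism of $\U$ that coincides with the distribution function of the (non-atomic) measure $\mu_\alpha$, one has $\mu_\alpha([a,b))=|\theta_\alpha(b)-\theta_\alpha(a)|$, and $\theta_\alpha'(x)=\infty$ is equivalent to $\mu_\alpha(I)/\lambda(I)\to\infty$ as $\lambda(I)\to0$ along intervals $I$ having $x$ as one endpoint. So I fix $y\neq x$ close to $x$; since the two sides of $x$ play symmetric rôles I assume $y>x$ and bound $\mu_\alpha([x,y))/\lambda([x,y))$ from below. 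Writing $x=[\ell_1,\ell_2,\ldots]_\alpha$, recall that the approximants $r_k^{(\alpha)}(x)=[\ell_1,\ldots,\ell_k]_\alpha$ are the endpoints of the nested $\alpha$-L\"uroth cylinders $C_\alpha(\ell_1,\ldots,\ell_k)\ni x$; by the parity remarks of Section~2 the odd-indexed ones lie to the right of $x$ and decrease to $x$, the even-indexed ones lie to the left and increase to $x$. Hence every interval $J_k:=[r_k^{(\alpha)}(x),r_{k+1}^{(\alpha)}(x)]_\pm$ contains $x$ in its interior, the $J_k$ decrease to $\{x\}$, and comparing the two descriptions of $\alpha$-Farey cylinder sets in Section~2 one finds $J_k=\widehat C_\alpha(0^{\ell_1-1},1,\ldots,0^{\ell_k-1},1,0^{\ell_{k+1}-1})=I^{(\alpha)}_{(\ell_1+\cdots+\ell_{k+1})-1}(x)$, so that $\mu_\alpha(J_k)=2^{-(\ell_1+\cdots+\ell_{k+1})+1}$ and $\lambda(J_k)=a_{\ell_1}\cdots a_{\ell_k}t_{\ell_{k+1}}$; the hypothesis is therefore exactly $\mu_\alpha(J_k)/\lambda(J_k)\to\infty$.

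The key elementary estimate, needing no hypothesis on $\alpha$, is that for every $k$
\[
\frac{\mu_\alpha\!\big([x,r_k^{(\alpha)}(x)]_\pm\big)}{\lambda\!\big([x,r_k^{(\alpha)}(x)]_\pm\big)}\ \geq\ \frac12\cdot\frac{\mu_\alpha(J_k)}{\lambda(J_k)}.
\]
For the denominator, $[x,r_k^{(\alpha)}(x)]_\pm\subseteq J_k$ gives $\lambda([x,r_k^{(\alpha)}(x)]_\pm)\le\lambda(J_k)$. For the numerator, inside $C_\alpha(\ell_1,\ldots,\ell_k)$ the sub-cylinders $C_\alpha(\ell_1,\ldots,\ell_k,n)$ accumulate precisely at $r_k^{(\alpha)}(x)$, so $[x,r_k^{(\alpha)}(x)]_\pm$ contains the union $\bigcup_{n\geq\ell_{k+1}+1}C_\alpha(\ell_1,\ldots,\ell_k,n)=\widehat C_\alpha(0^{\ell_1-1},1,\ldots,0^{\ell_k-1},1,0^{\ell_{k+1}})$, an $\alpha$-Farey cylinder one level deeper than $J_k$ and hence of $\mu_\alpha$-mass $\tfrac12\mu_\alpha(J_k)$. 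Together with the hypothesis, this already shows that the difference quotient of $\theta_\alpha$ over the one-sided intervals joining $x$ to an approximant tends to $\infty$.

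It remains to handle an arbitrary $y>x$. For $y$ small there is a unique odd $k=k(y)$, with $k(y)\to\infty$ as $y\to x$, such that $r_{k+2}^{(\alpha)}(x)<y\le r_k^{(\alpha)}(x)$, i.e. $y\in J_k\setminus J_{k+1}$. Decomposing $[x,y)=[x,r_{k+2}^{(\alpha)}(x)]\,\sqcup\,[r_{k+2}^{(\alpha)}(x),y)$ and then splitting $[r_{k+2}^{(\alpha)}(x),y)$ along the $\alpha$-Farey cylinders it meets, a repeated application of the mediant inequality $\frac{a+b}{c+d}\ge\min(\frac ac,\frac bd)$ bounds $\mu_\alpha([x,y))/\lambda([x,y))$ below by a minimum of finitely many ratios: the one-sided quotient at $r_{k+2}^{(\alpha)}(x)$ treated above, together with quotients $2^{-m}/\lambda(I)$ for various $\alpha$-Farey cylinders $I$ of level $m\geq(\ell_1+\cdots+\ell_{k+1})+1$ lying between $r_{k+2}^{(\alpha)}(x)$ and $r_k^{(\alpha)}(x)$. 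Here I would invoke for the first time the standing assumption that $\alpha$ is expanding or expansive of exponent $\tau>0$: it controls the ratios of successive $\alpha$-Farey cylinder lengths (bounded in the expanding case, since $t_n/t_{n+1}\to\rho$ and $t_n/a_n\to\rho/(\rho-1)$; comparable to $1$ along the ``$0$-parts'' of the $\alpha$-L\"uroth blocks and only helpful at the transitions—where $\lambda$ drops by $t_n/a_n$ but $\mu_\alpha$ drops by exactly $2$—in the expansive case), and therefore lets one propagate the divergence of $\mu_\alpha(J_k)/\lambda(J_k)$, which a priori constrains only the ``block-end'' levels $m=(\ell_1+\cdots+\ell_{k+1})-1$, to all scales occurring in the decomposition. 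This gives $\mu_\alpha([x,y))/\lambda([x,y))\to\infty$, and the symmetric argument for $y<x$ then yields $\theta_\alpha'(x)=\infty$.

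\textbf{Main obstacle.} I expect the propagation in the last paragraph to be the real difficulty: the hypothesis bears only on the thin subsequence of scales at the ends of the $\alpha$-L\"uroth blocks, so one genuinely needs the expanding/expansive regularity—together with the exact halving of $\mu_\alpha$-mass under refinement of $\alpha$-Farey cylinders—to rule out the ratio $2^{-m}/\lambda(I^{(\alpha)}_m(\cdot))$ being small at intermediate scales and at the auxiliary cylinders hanging off $r_{k+2}^{(\alpha)}(x)$ when $y$ is not itself an approximant. The attendant bookkeeping with parities of the approximants and with the left/right orientation of the sub-cylinders of each $\alpha$-L\"uroth cylinder is routine but must be carried out with care.
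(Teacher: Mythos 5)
Your setup is sound and matches the paper's: the identification $[r_k^{(\alpha)}(x),r_{k+1}^{(\alpha)}(x)]_\pm=I^{(\alpha)}_{\ell_1+\cdots+\ell_{k+1}-1}(x)$, the reading of difference quotients of $\theta_\alpha$ as ratios $\mu_\alpha(I)/\lambda(I)$ via the distribution-function description, and your ``half-mass'' estimate showing that the one-sided quotients over $[x,r_k^{(\alpha)}(x)]_\pm$ diverge are all correct. But the proof stops exactly where the proposition's real content begins, and you say so yourself: the passage from the approximant intervals to an arbitrary $y$ with $r_{k+2}^{(\alpha)}(x)<y\le r_k^{(\alpha)}(x)$ is only gestured at (``I would invoke the standing assumption \dots to propagate the divergence''), with no actual estimate. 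That propagation is not a routine bookkeeping step; it is the entire body of the paper's proof, which proceeds by locating $y$ among the intermediary points $[\ell_1,\ldots,\ell_k,n]_\alpha$ (or $[\ell_1,\ldots,\ell_k+n]_\alpha$ when $\ell_{k+1}=1$) and, in each position, bounding $\theta_\alpha(y)-\theta_\alpha(x)$ below by the $\mu_\alpha$-mass of an explicit subinterval and $y-x$ above by an explicit length, so that the quotient is $\gg$ the block-end ratio at level $k$ or at level $k+1$. The expansive/expanding hypothesis enters there quantitatively, e.g.\ through lower bounds of the form $2^{\ell_{k+1}-(n+1)}t_{\ell_{k+1}}/t_n\gg1$ (valid when $\alpha$ is expansive or expanding with $\rho<2$), while the case $\rho>2$ requires pairing with the level-$k$ ratio instead; the degenerate situations $\ell_{k+1}=1$ and $\ell_{k+2}=1$, where there are no intermediate approximants or the point $[\ell_1,\ldots,\ell_{k+1},1]_\alpha$ falls on the wrong side of $x$, need separate subcases. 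None of this is supplied in your proposal.

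Moreover, the specific reduction you propose -- decomposing $[r_{k+2}^{(\alpha)}(x),y)$ into the $\alpha$-Farey cylinders it meets and applying the mediant inequality -- does not obviously close the gap, because it leaves you needing lower bounds on ratios $2^{-m}/\lambda(I^{(\alpha)}_m)$ for cylinders hanging off two different $\alpha$-L\"uroth blocks at intermediate levels $m$, and the hypothesis controls only the thin subsequence of block-end levels; whether those intermediate ratios can be compared to the block-end ones is precisely the question, and the answer depends on which regime ($\tau>0$, $\rho<2$, $\rho>2$) one is in. So as written the argument has a genuine gap at its main step; to complete it you would have to carry out essentially the case analysis of the paper (or an equivalent set of explicit comparisons), not merely cite the expanding/expansive assumption.
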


\begin{proof}
First, notice that we have $[r_k^{(\alpha)}(x), r_{k+1}^{(\alpha)}(x)]_\pm=I_{\ell_1+\cdots \ell_{k+1}-1}^{(\alpha)}$, from which we immediately deduce that
\[
\frac{\mu_\alpha\left(\left[r_k^{(\alpha)}(x), r_{k+1}^{(\alpha)}(x)\right]_\pm\right)}{\lambda\left(\left[r_k^{(\alpha)}(x), r_{k+1}^{(\alpha)}(x)\right]_\pm\right)}=\frac{2\cdot2^{-(\ell_1+\cdots +\ell_{k+1})}}{a_{\ell_1}\ldots a_{\ell_{k}}t_{\ell_{k+1}}}.
\]
Let $y>x$. Then, for all $y$ close enough to $x$, there exists an even positive integer $k$ such that
\begin{eqnarray}\label{star1}
y&\in& ([\ell_1, \ldots, \ell_{k+1}]_\alpha, [\ell_1, \ldots, \ell_{k-1}]_\alpha].
\end{eqnarray}
We will consider separately the cases $\ell_{k+1}>1$ and $\ell_{k+1}=1$. Suppose that we are the first of these cases, so $\ell_{k+1}>1$. Then we can split the interval in (\ref{star1}) up into smaller intervals to locate $y$ with greater precision. Between the points $[\ell_1, \ldots, \ell_{k+1}]_\alpha$ and $[\ell_1, \ldots, \ell_{k-1}]_\alpha$ lie the points (written in increasing order), $[\ell_1, \ldots, \ell_{k}, \ell_{k+1}-1]_\alpha$, $[\ell_1, \ldots, \ell_{k}, \ell_{k+1}-2]_\alpha$, \ldots, $[\ell_1, \ldots, \ell_{k}, 2]_\alpha$ and $[\ell_1, \ldots, \ell_{k}, 1]_\alpha$, as shown in Figure 3.1 below.

\begin{figure}[htbp]
\begin{center}
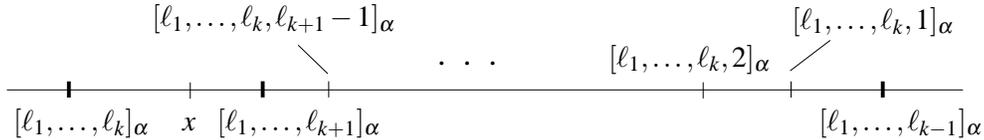\caption{The positions of the convergents (indicated with thicker lines) and intermediary points to $x$. }
\end{center}
\end{figure}

\noindent{\bf Case 1.1} $\ $Suppose first that
\[
[\ell_1, \ldots, \ell_{k}, 1]_\alpha<y\leq [\ell_1, \ldots, \ell_{k-1}]_\alpha.
\]
Immediately from the definition of $\theta_\alpha$ and the fact that $\theta_\alpha$ is an increasing function, we calculate that
\begin{eqnarray*}
\theta_\alpha(y)-\theta_\alpha(x)&\geq&\theta_\alpha([\ell_1, \ldots, \ell_{k}, 1]_\alpha)-\theta_\alpha([\ell_1, \ldots, \ell_{k+1}]_\alpha)\\
&=& 2\cdot 2^{-(\ell_1+\cdots +\ell_k)}(2^{-1}-2^{-\ell_{k+1}})\\
&\gg &2\cdot2^{-(\ell_1+\cdots +\ell_k)},
\end{eqnarray*}
where the last inequality comes from the fact that $\ell_{k+1}>1$. Moreover,
\[
y-x \leq [\ell_1, \ldots, \ell_{k-1}]_\alpha-[\ell_1, \ldots, \ell_{k}]_\alpha=a_{\ell_1}\ldots a_{\ell_{k-1}}t_{\ell_{k}}.
\]
Thus, in this first instance, we obtain that
\[
\frac{\theta_\alpha(y)-\theta_\alpha(x)}{y-x}\gg \frac{2\cdot2^{-(\ell_1+\cdots +\ell_k)}}{a_{\ell_1}\ldots a_{\ell_{k-1}}t_{\ell_{k}}}.
\]

\vspace{2mm}

\noindent{\bf Case 1.2} $\ $Now suppose that there exists a positive integer $n\in\{1, 2, \ldots, \ell_{k+1}-2\}$ such that
\[
[\ell_1, \ldots, \ell_{k}, n+1]_\alpha<y\leq [\ell_1, \ldots, \ell_{k}, n]_\alpha.
\]


At this point we have to split the argument up again. First suppose that the partition $\alpha$ is either expansive with exponent $\tau$, or, expanding with $\lim_{n\to \infty} t_{n}/t_{n+1}= \rho$ for $1<\rho<2$. We then  obtain that
\begin{eqnarray*}
\frac{\theta_\alpha(y)-\theta_\alpha(x)}{y-x}&\gg&\frac{2^{-(\ell_1+\cdots +\ell_{k+1})}}{a_{\ell_1}\ldots a_{\ell_{k}}t_{\ell_{k+1}}}\cdot\frac{2^{\ell_{k+1}-(n+1)}t_{\ell_{k+1}}}{t_n}\\
&\gg& \frac{2^{-(\ell_1+\cdots +\ell_{k+1})}}{a_{\ell_1}\ldots a_{\ell_{k}}t_{\ell_{k+1}}}.
\end{eqnarray*}

\vspace{1mm}

On the other hand, if $\alpha$ is expanding with $\lim_{n\to \infty} t_{n}/t_{n+1}= \rho$ for $\rho>2$, we have that
\begin{eqnarray*}
\frac{\theta_\alpha(y)-\theta_\alpha(x)}{y-x}&\gg&\frac{2^{-(\ell_1+\cdots +\ell_{k})}}{a_{\ell_1}\ldots a_{\ell_{k}}}\cdot \frac{2^{-n}(1-2^{n-\ell_{k+1}})}{t_{n}}\\
&\gg& \frac{2^{-(\ell_1+\cdots +\ell_{k})}}{a_{\ell_1}\ldots a_{\ell_{k-1}}t_{\ell_{k}}}.
\end{eqnarray*}

\vspace{2mm}

\noindent{\bf Case 1.3} $\ $For the final part of the first case, suppose that
\[
[\ell_1, \ldots, \ell_{k+1}]_\alpha<y \leq [\ell_1, \ldots, \ell_{k+1}-1]_\alpha.
\]
In this situation, the argument used in Case 1.2 will no longer suffice. We must consider a further two subcases.

\vspace{2mm}

\noindent{\bf Subcase 1.3.1} $\ell_{k+2}>1$.

\vspace{1mm}

\noindent In the event that $\ell_{k+2}>1$, the point $[\ell_1, \ldots, \ell_{k+1}, 1]_\alpha$ still lies to the right of the point $x$. Then,
\begin{eqnarray*}
\frac{\theta_\alpha(y)-\theta_\alpha(x)}{y-x}&\geq&\frac{\theta_\alpha([\ell_1, \ldots, \ell_{k+1}]_\alpha)-\theta_\alpha([\ell_1, \ldots, \ell_{k+1}, 1]_\alpha)}{[\ell_1, \ldots, \ell_{k+1}-1]_\alpha-[\ell_1, \ldots, \ell_k]_\alpha}\\&=&\frac{2\cdot 2^{-(\ell_1+\cdots +\ell_{k+1})}(1-1/2)}{a_{\ell_1}\ldots a_{\ell_k}t_{\ell_{k+1}-1}}\\&\gg&
\frac{2\cdot2^{-(\ell_1+\cdots +\ell_{k+1})}}{a_{\ell_1}\ldots a_{\ell_k}t_{\ell_{k+1}}},
\end{eqnarray*}
where the last inequality again comes from the fact that $\alpha$ is expansive of exponent $\tau\geq 0$ or expanding.

\vspace{1mm}

\noindent{\bf Subcase 1.3.2} $\ell_{k+2}=1$.

\vspace{1mm}

\noindent In the event that $\ell_{k+2}>1$, the point $[\ell_1, \ldots, \ell_{k+1}, 1]_\alpha$ lies to the left of $x$ (it is equal to the $(k+2)$-th convergent). So, we make a slightly different calculation:
\begin{eqnarray*}
\frac{\theta_\alpha(y)-\theta_\alpha(x)}{y-x}&\geq&\frac{\theta_\alpha([\ell_1, \ldots, \ell_{k+1}]_\alpha)-\theta_\alpha([\ell_1, \ldots, \ell_{k+1}, 1, \ell_{k+3}]_\alpha)}{[\ell_1, \ldots, \ell_{k+1}-1]_\alpha-[\ell_1, \ldots, \ell_k, 1]_\alpha}\\&=&\frac{2\cdot 2^{-(\ell_1+\cdots +\ell_{k+1})}(2^{-1}-2^{-(1+\ell_{k+3})})}{a_{\ell_1}\ldots a_{\ell_k}t_{\ell_{k+1}}}\\&\gg&
\frac{2\cdot2^{-(\ell_1+\cdots +\ell_{k+1})}}{a_{\ell_1}\ldots a_{\ell_k}t_{\ell_{k+1}}}.
\end{eqnarray*}

This finishes all the permutations of the case where $\ell_{k+1}>1$. We now come to the case $\ell_{k+1}=1$. Again, this will be split into various cases. First notice that we can split up the interval $([\ell_1, \ldots, \ell_k, 1]_{\alpha}, [\ell_1, \ldots, \ell_{k-1}]_\alpha]$ using the points $[\ell_1, \ldots, \ell_k+n]_\alpha$ for $n\in\N$, as shown in Figure 3.3.

\begin{figure}[htbp]
\begin{center}
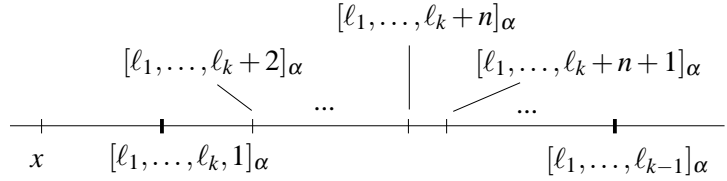\caption{Splitting up the interval $([\ell_1, \ldots, \ell_k, 1]_{\alpha}, [\ell_1, \ldots, \ell_{k-1}]_\alpha]$. }
\end{center}
\end{figure}

\noindent{\bf Case 2.1} Suppose that there exists $n\geq2$ such that
\[
[\ell_1, \ldots, \ell_k+n]_\alpha<y\leq[\ell_1, \ldots, \ell_k+n+1]_\alpha.
\]
Then,
\begin{eqnarray*}
\frac{\theta_\alpha(y)-\theta_\alpha(x)}{y-x}&\geq&\frac{\theta_\alpha([\ell_1, \ldots, \ell_k+n]_\alpha)-\theta_\alpha([\ell_1, \ldots, \ell_{k}, 1]_\alpha)}{[\ell_1, \ldots, \ell_{k}+n+1]_\alpha-[\ell_1, \ldots, \ell_k]_\alpha}\\
&=&\frac{2\cdot 2^{-(\ell_1+\cdots +\ell_{k})}(2^{-1}-2^{-n})}{a_{\ell_1}\ldots a_{\ell_{k-1}}(t_{\ell_{k}}-t_{\ell_{k}+n+1})}\\&\gg&
\frac{2\cdot2^{-(\ell_1+\cdots +\ell_{k})}}{a_{\ell_1}\ldots a_{\ell_{k-1}}t_{\ell_{k}}}.
\end{eqnarray*}

\vspace{2mm}

\noindent{\bf Case 2.2} Suppose that
\[
[\ell_1, \ldots, \ell_k, 1]_\alpha<y\leq[\ell_1, \ldots, \ell_k+2]_\alpha.
\]
We will again split this into two subcases.

\vspace{2mm}

\noindent{\bf Subcase 2.2.1} $\ell_{k+2}>1$.

\vspace{1mm}

\noindent In the event that $\ell_{k+2}>1$, the point $[\ell_1, \ldots,\ell_k,  1, 1]_\alpha$ lies to the right of the point $x$. Then,
\begin{eqnarray*}
\frac{\theta_\alpha(y)-\theta_\alpha(x)}{y-x}&\geq&\frac{\theta_\alpha([\ell_1, \ldots, \ell_k, 1]_\alpha)-\theta_\alpha([\ell_1, \ldots, \ell_k, 1, 1]_\alpha)}{[\ell_1, \ldots, \ell_k+2]_\alpha-[\ell_1, \ldots, \ell_k]_\alpha}\\&=&
\frac{2\cdot 2^{-(\ell_1+\cdots +\ell_{k})}(2^{-1}-2^{-1}+2^{-2})}{a_{\ell_1}\ldots a_{\ell_{k-1}}(t_{\ell_{k}}-t_{\ell_{k}+2})}\geq \frac{2\cdot2^{-(\ell_1+\cdots +\ell_{k})}}{a_{\ell_1}\ldots a_{\ell_{k-1}}t_{\ell_{k}}}.
\end{eqnarray*}

\vspace{1mm}

\noindent{\bf Subcase 2.2.2} $\ell_{k+2}=1$.

\vspace{1mm}

\noindent  We make a similar calculation as for Subcase 1.3.2.
\begin{eqnarray*}
\frac{\theta_\alpha(y)-\theta_\alpha(x)}{y-x}&\geq& \frac{\theta_\alpha([\ell_1, \ldots, \ell_k, 1]_\alpha)-\theta_\alpha([\ell_1, \ldots, \ell_k, 1, 1, \ell_{k+3}]_\alpha)}{[\ell_1, \ldots, \ell_k+2]_\alpha-[\ell_1, \ldots, \ell_k]_\alpha}\\&=&
\frac{2\cdot 2^{-(\ell_1+\cdots +\ell_{k})}(2^{-1}-2^{-1}+2^{-2}-2^{-(2+\ell_{k+3})})}{a_{\ell_1}\ldots a_{\ell_{k-1}}(t_{\ell_{k}}-t_{\ell_{k}+2})}\\
&\gg& \frac{2\cdot2^{-(\ell_1+\cdots +\ell_{k})}}{a_{\ell_1}\ldots a_{\ell_{k-1}}t_{\ell_{k}}}.
\end{eqnarray*}

This finishes Case 2. We have shown that for any $y>x$,
\[
\frac{\theta_\alpha(y)-\theta_\alpha(x)}{y-x}\gg \frac{2\cdot2^{-(\ell_1+\cdots +\ell_{k})}}{a_{\ell_1}\ldots a_{\ell_{k-1}}t_{\ell_{k}}}.
\]
A similar calculation can be done for $y<x$; we leave that to the reader. Thus the proof of the proposition is finished.
\end{proof}

\begin{rem}\label{limsupinf}
In \cite[Proposition 5.3~(i)]{mink?}, a similar result was proved for the Minkowski question mark function. However, the proof there contains a small mistake (the first inequality on page 2678 is incorrect) and is also incomplete (they do not consider the possibility that the  $(k+1)$-th continued fraction entry could equal one, in which case there are no intermediate approximants).
\end{rem}

The following corollary will be of use in the next section.

\begin{cor}\label{liminfderinfcor}
For each $x\in\U$, we have that
\[
\theta_\alpha'(x)=\infty\ \text{ if and only if }\ \lim_{k\to\infty}{\mu_\alpha\left(I^{(\alpha)}_k\right)}/{\lambda\Bigl(I^{(\alpha)}_k\Bigr)}=\infty.
\]
\end{cor}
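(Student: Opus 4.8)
The plan is to rephrase both conditions in terms of the single sequence $\bigl(2^{-n}/\lambda(I_n^{(\alpha)}(x))\bigr)_n$. Since $\mu_\alpha$ gives mass $2^{-n}$ to every $n$-th level $\alpha$-Farey cylinder set, we have $\mu_\alpha(I_n^{(\alpha)}(x))=2^{-n}$ for all $n$ and all $x$, so the right-hand condition of the corollary says precisely that $2^{-n}/\lambda(I_n^{(\alpha)}(x))\to\infty$. For the implication ``$\theta_\alpha'(x)=\infty\Rightarrow 2^{-n}/\lambda(I_n^{(\alpha)}(x))\to\infty$'' I would invoke Lemma \ref{lemma1}: if $\theta_\alpha'(x)=\infty$ then in particular $\theta_\alpha'(x)$ exists in a generalised sense, so for $\alpha$-irrational $x$ part (a) gives the claim at once, while for an $\alpha$-rational $x=[\ell_1,\dots,\ell_k]_\alpha$ part (b) forces $\lim_m 2^{-m}/t_m=\infty$. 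One then concludes by observing that for $n=(\ell_1+\dots+\ell_k)+m$ the cylinder $I_n^{(\alpha)}(x)$ equals $\widehat{C}_\alpha(0^{\ell_1-1},1,\dots,0^{\ell_k-1},1,0^{m})$, so that $2^{-n}/\lambda(I_n^{(\alpha)}(x))=\tfrac{2^{-(\ell_1+\dots+\ell_k)}}{a_{\ell_1}\cdots a_{\ell_k}}\cdot\tfrac{2^{-m}}{t_{m+1}}$, and $\lim_m t_m/t_{m+1}$ is finite and positive (it equals $1$ if $\alpha$ is expansive of exponent $\tau>0$ and $\rho$ if $\alpha$ is expanding), whence $2^{-m}/t_{m+1}\to\infty$ as well.

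For the converse implication, assume $2^{-n}/\lambda(I_n^{(\alpha)}(x))\to\infty$ and distinguish two cases. If $x=[\ell_1,\ell_2,\dots]_\alpha$ is $\alpha$-irrational, I would appeal to Proposition \ref{liminfderinf}. The key point, already recorded at the start of the proof of that proposition, is that $[r_k^{(\alpha)}(x),r_{k+1}^{(\alpha)}(x)]_\pm=I_{\ell_1+\dots+\ell_{k+1}-1}^{(\alpha)}(x)$; hence the quotient $\mu_\alpha([r_k^{(\alpha)}(x),r_{k+1}^{(\alpha)}(x)]_\pm)/\lambda([r_k^{(\alpha)}(x),r_{k+1}^{(\alpha)}(x)]_\pm)$ is exactly the term of index $n=\ell_1+\dots+\ell_{k+1}-1$ of our sequence. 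Because these indices strictly increase with $k$, they are cofinal, so this subsequence tends to $\infty$ whenever the whole sequence does, and Proposition \ref{liminfderinf} then yields $\theta_\alpha'(x)=\infty$.

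The remaining case — $x$ an $\alpha$-rational with $2^{-n}/\lambda(I_n^{(\alpha)}(x))\to\infty$ — is the one not covered by Proposition \ref{liminfderinf} (the approximants $r_k^{(\alpha)}(x)$ are not even defined there), and is where I expect the only real extra work. As in the first paragraph, the hypothesis forces $\lim_m 2^{-m}/t_m=\infty$, and I would then prove $\theta_\alpha'(x)=\infty$ by a short, direct version of the bookkeeping in Proposition \ref{liminfderinf}: writing $x=[\ell_1,\dots,\ell_k]_\alpha$ with, say, $k$ even, any $y>x$ close enough to $x$ lies in some cylinder $C_\alpha(\ell_1,\dots,\ell_k,n)$ with $n$ large, whence $\theta_\alpha(y)-\theta_\alpha(x)\ge\mu_\alpha\bigl(\widehat{C}_\alpha(0^{\ell_1-1},1,\dots,0^{\ell_k-1},1,0^{n})\bigr)=2^{-(\ell_1+\dots+\ell_k+n)}$ while $y-x\le[\ell_1,\dots,\ell_k,n]_\alpha-x=a_{\ell_1}\cdots a_{\ell_k}t_n$, so the difference quotient is $\gg 2^{-n}/t_n\to\infty$; the cases $y<x$ and $k$ odd are entirely analogous. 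The main obstacle is simply organising this last estimate so that, in every position of $y$ relative to the points $[\ell_1,\dots,\ell_k,n]_\alpha$, the numerator is bounded below by the $\mu_\alpha$-mass of a suitable Farey cylinder while the denominator stays $\ll a_{\ell_1}\cdots a_{\ell_k}t_n$ — the same kind of casework that makes Proposition \ref{liminfderinf} long, but here, with no further Farey digits to track, it should be routine.
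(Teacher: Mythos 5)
Your argument is essentially the paper's own proof: the forward implication via Lemma \ref{lemma1} and the backward implication by observing that the quotients along the approximants $[r_k^{(\alpha)}(x),r_{k+1}^{(\alpha)}(x)]_\pm$ form a subsequence of $\bigl(\mu_\alpha(I^{(\alpha)}_n)/\lambda(I^{(\alpha)}_n)\bigr)_n$ and invoking Proposition \ref{liminfderinf}. The only difference is that you spell out the $\alpha$-rational case (in both directions), which the paper passes over silently; that extra bookkeeping is correct and harmless.
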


\begin{proof}
If the derivative of $\theta_\alpha$ at $x$ exists in a generalised sense and $\theta_\alpha'(x)=\infty$, the conclusion of the corollary  follows directly from Lemma \ref{lemma1}.
For the other direction, recall that $[r_k^{(\alpha)}(x), r_{k+1}^{(\alpha)}(x)]_\pm=I_{\ell_1+\cdots \ell_{k+1}-1}^{(\alpha)}$ and so the sequence $\left(\mu_\alpha([r_k^{(\alpha)}, r_{k+1}^{(\alpha)}]_\pm)/\lambda([r_k^{(\alpha)}, r_{k+1}^{(\alpha)}]_\pm)\right)_{k\geq1}$ is a subsequence of the sequence $\left(\mu_\alpha(I^{(\alpha)}_k)/\lambda(I^{(\alpha)}_k)\right)_{k\geq1}$. Thus, the corollary is an immediate consequence of Proposition \ref{liminfderinf}.
\end{proof}

Let us now consider a condition which gives rise to points with derivative equal to zero (recall that almost every $x\in [0,1]$ is such that $\theta_\alpha'(x)=0$).

\begin{prop}\label{limitto0}
Suppose that $\alpha$ is either  {expansive of exponent $\tau>0$} or { expanding}. Let $x=[\ell_1, \ell_2, \ell_3, \ldots]_\alpha$ be such that
\[
\lim_{k\to\infty}\frac{\mu_\alpha\left(\left[r_k^{(\alpha)}(x), r_{k+1}^{(\alpha)}(x)\right]_\pm\right)}{\lambda\left(\left[r_k^{(\alpha)}(x), r_{k+1}^{(\alpha)}(x)\right]_\pm\right)}\cdot \frac{t_{\ell_{k+1}}}{a_{\ell_{k+1}}}=0.
\]
Then, $\theta_\alpha'(x)=0$.
\end{prop}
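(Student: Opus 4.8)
The plan is to mirror the structure of the proof of Proposition~\ref{liminfderinf}, but to bound the difference quotient from \emph{above} rather than below, over all $y \neq x$ sufficiently close to $x$. Recall from that proof that
\[
\frac{\mu_\alpha\left(\left[r_k^{(\alpha)}(x), r_{k+1}^{(\alpha)}(x)\right]_\pm\right)}{\lambda\left(\left[r_k^{(\alpha)}(x), r_{k+1}^{(\alpha)}(x)\right]_\pm\right)}=\frac{2\cdot2^{-(\ell_1+\cdots +\ell_{k+1})}}{a_{\ell_1}\ldots a_{\ell_{k}}t_{\ell_{k+1}}},
\]
so the hypothesis says precisely that
\[
\lim_{k\to\infty}\frac{2\cdot 2^{-(\ell_1+\cdots+\ell_{k+1})}}{a_{\ell_1}\ldots a_{\ell_k}a_{\ell_{k+1}}}=0,
\]
i.e.\ that $2^{-n}/\lambda(C_\alpha(\ell_1,\ldots,\ell_n))\to 0$ along the subsequence $n=\ell_1+\cdots+\ell_k$ of indices where the $\alpha$-Farey cylinder is an $\alpha$-L\"uroth cylinder. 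First I would fix $y > x$ close to $x$ and, exactly as in Figure~3.1 and Figure~3.3, locate $y$ in one of the intervals between consecutive intermediary points $[\ell_1,\ldots,\ell_k,n]_\alpha$ (or $[\ell_1,\ldots,\ell_k+n]_\alpha$ when $\ell_{k+1}=1$) for the appropriate even $k$. In each of the resulting cases, the numerator $\theta_\alpha(y)-\theta_\alpha(x)$ is at most the length of $\theta_\alpha$ applied to the enclosing interval $([\ell_1,\ldots,\ell_{k+1}]_\alpha,[\ell_1,\ldots,\ell_{k-1}]_\alpha]$, hence comparable to $2\cdot 2^{-(\ell_1+\cdots+\ell_{k-1})}t_{\ell_k}$ under $\mu_\alpha$; meanwhile $y - x$ is bounded below, in the relevant case, by a fixed fraction of the length of the smallest interval containing both $x$ and $y$, which is where the extra factor $t_{\ell_{k+1}}/a_{\ell_{k+1}}$ enters.

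More carefully, the key step is the following dichotomy. When $y$ is separated from $x$ by at least one intermediary convergent point (Cases 1.1, 1.2, 2.1, and the outer parts of 1.3, 2.2 in the earlier proof), one gets $\theta_\alpha(y)-\theta_\alpha(x)\ll 2\cdot 2^{-(\ell_1+\cdots+\ell_{k-1})}t_{\ell_k}$ (using $t_{\ell_k}\ll 1$ for expanding/expansive partitions — this is exactly where eventual monotonicity or the ratio condition on the tails is invoked) and $y-x\gg a_{\ell_1}\ldots a_{\ell_k}$, so the quotient is $\ll 2^{-(\ell_1+\cdots+\ell_k)}/(a_{\ell_1}\ldots a_{\ell_k})\cdot(t_{\ell_k}/a_{\ell_k})$, which is a term of a sequence tending to $0$ by a reindexed form of the hypothesis (note $2^{-n}/\lambda(C_\alpha(\ell_1,\ldots,\ell_{k-1}))\cdot t_{\ell_k}/a_{\ell_k}$ with $n=\ell_1+\cdots+\ell_{k-1}$ agrees with the hypothesis up to passing from index $k$ to $k-1$). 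When instead $y$ lies in the innermost interval adjacent to $x$ — i.e.\ $x<y\leq [\ell_1,\ldots,\ell_k,1]_\alpha$ in the notation of Case~1 with $\ell_{k+1}>1$, or the analogous innermost piece when $\ell_{k+1}=1$ — one recurses: the quotient for such $y$ is controlled by the quotient over the next level interval $I^{(\alpha)}_{\ell_1+\cdots+\ell_{k+1}-1}$, which is precisely $\mu_\alpha/\lambda$ of $[r_{k}^{(\alpha)},r_{k+1}^{(\alpha)}]_\pm$ times the correcting factor $t_{\ell_{k+1}}/a_{\ell_{k+1}}$ coming from replacing $y-x$ by a subinterval of length $\asymp a_{\ell_1}\ldots a_{\ell_k}a_{\ell_{k+1}}$ while the numerator is still $\asymp 2\cdot 2^{-(\ell_1+\cdots+\ell_k)}t_{\ell_{k+1}}$ under $\mu_\alpha$; this is exactly the quantity assumed in the hypothesis to tend to $0$.

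Assembling these bounds, for every $y>x$ close enough to $x$ one obtains $(\theta_\alpha(y)-\theta_\alpha(x))/(y-x)\leq \varepsilon_k$ where $\varepsilon_k\to 0$ as $y\to x$ (the index $k$ of the enclosing convergent block necessarily tends to infinity as $y\to x$, since $x$ is $\alpha$-irrational or, if $\alpha$-rational, part~(b) of Lemma~\ref{lemma1} handles it directly). The same argument with the obvious modifications handles $y<x$, using the left-hand intermediary points. Hence $\theta_\alpha'(x)=0$. The main obstacle I anticipate is bookkeeping in the case split for $y<x$ combined with the sub-cases $\ell_{k+1}=1$ versus $\ell_{k+1}>1$ and $\ell_{k+2}=1$ versus $\ell_{k+2}>1$: one has to check that in \emph{every} configuration the numerator of the difference quotient genuinely loses the factor $t_{\ell_{k+1}}/a_{\ell_{k+1}}$ relative to $2^{-n}/\lambda(I^{(\alpha)}_n)$, with no configuration producing a quotient that fails to be absorbed into the vanishing sequence; the expanding/expansive hypothesis on $\alpha$ is what guarantees the comparisons $t_n/t_{n+1}$ and $t_n - t_{n+m}\asymp t_n$ needed to make these estimates uniform, exactly as in Proposition~\ref{liminfderinf}.
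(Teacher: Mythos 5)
Your overall strategy is the same as the paper's: rewrite the hypothesis as $2\cdot2^{-(\ell_1+\cdots+\ell_{k+1})}/(a_{\ell_1}\cdots a_{\ell_{k+1}})\to0$ and bound the difference quotient from above, case by case, using the intermediary points between consecutive approximants. However, the central estimate you state in the ``separated'' case is wrong, and wrong in a way that matters exactly for the partitions the proposition is about. The $\mu_\alpha$-mass of the enclosing interval $([\ell_1,\ldots,\ell_{k+1}]_\alpha,[\ell_1,\ldots,\ell_{k-1}]_\alpha]$ is comparable to $2^{-(\ell_1+\cdots+\ell_k)}=2^{-(\ell_1+\cdots+\ell_{k-1})}\cdot2^{-\ell_k}$, not to $2\cdot2^{-(\ell_1+\cdots+\ell_{k-1})}t_{\ell_k}$: the measure $\mu_\alpha$ assigns dyadic weights to $\alpha$-Farey cylinders, so no tail $t_{\ell_k}$ can occur in its mass. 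Consequently the quantity you call a ``reindexed form of the hypothesis'', namely $\frac{2^{-(\ell_1+\cdots+\ell_{k-1})}}{a_{\ell_1}\cdots a_{\ell_{k-1}}}\cdot\frac{t_{\ell_k}}{a_{\ell_k}}$, is \emph{not} the hypothesis at index $k-1$: that hypothesis equals $\frac{2\cdot2^{-(\ell_1+\cdots+\ell_k)}}{a_{\ell_1}\cdots a_{\ell_k}}$, and your expression exceeds it by the factor $2^{\ell_k}t_{\ell_k}/2$. For an expansive partition (say $t_n=n^{-1}$, so $t_n/a_n=n+1$ and $2^n t_n\to\infty$) this factor is unbounded along any digit sequence with $\ell_k\to\infty$, and one can easily produce points satisfying the hypothesis (e.g.\ long blocks of a fixed digit with $2^{-\ell}/a_\ell<1$, interrupted by occasional enormous digits) for which your claimed upper bound $H_{k-1}\,t_{\ell_k}/a_{\ell_k}$ does not tend to zero even though $H_m:=2^{-(\ell_1+\cdots+\ell_m)}/(a_{\ell_1}\cdots a_{\ell_m})\to0$. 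So, as written, that branch of your dichotomy does not close the proof; the same $t$-versus-$2^{-\ell}$ confusion appears in your ``innermost'' case (``numerator $\asymp2\cdot2^{-(\ell_1+\cdots+\ell_k)}t_{\ell_{k+1}}$ under $\mu_\alpha$''), although there your final identification with the hypothesis happens to be correct.

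The gap is repairable within your own scheme, and the repair is exactly the paper's computation: for $[\ell_1,\ldots,\ell_k,1]_\alpha<y\leq[\ell_1,\ldots,\ell_{k-1}]_\alpha$ bound the numerator by $\theta_\alpha([\ell_1,\ldots,\ell_{k-1}]_\alpha)-\theta_\alpha([\ell_1,\ldots,\ell_k]_\alpha)=2\cdot2^{-(\ell_1+\cdots+\ell_k)}$ and the denominator below by $[\ell_1,\ldots,\ell_k,1]_\alpha-[\ell_1,\ldots,\ell_{k+1}]_\alpha=a_{\ell_1}\cdots a_{\ell_k}(1-t_{\ell_{k+1}})\geq a_{\ell_1}\cdots a_{\ell_k}a_1$, which yields $\frac{\theta_\alpha(y)-\theta_\alpha(x)}{y-x}\ll\frac{2\cdot2^{-(\ell_1+\cdots+\ell_k)}}{a_{\ell_1}\cdots a_{\ell_k}}$, i.e.\ precisely the hypothesis quantity shifted by one index, with no spurious $t_{\ell_k}/a_{\ell_k}$. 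The intermediate subcases $[\ell_1,\ldots,\ell_k,n+1]_\alpha<y\leq[\ell_1,\ldots,\ell_k,n]_\alpha$ are where the expansive/expanding assumption enters (with the case $\rho>2$ treated separately), and the remaining configurations ($\ell_{k+2}=1$ versus $\ell_{k+2}>1$, $\ell_{k+1}=1$, and $y<x$) go through as you anticipate once the $\mu_\alpha$-masses are written with dyadic weights throughout.
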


\begin{proof}
First, notice that
\[
\frac{\mu_\alpha\left(\left[r_k^{(\alpha)}(x), r_{k+1}^{(\alpha)}(x)\right]_\pm\right)}{\lambda\left(\left[r_k^{(\alpha)}(x), r_{k+1}^{(\alpha)}(x)\right]_\pm\right)}\cdot \frac{t_{\ell_{k+1}}}{a_{\ell_{k+1}}}=\frac{2\cdot 2^{-(\ell_1+\cdots +\ell_{k+1})}}{a_{\ell_1}\ldots a_{\ell_{k}}a_{\ell_{k+1}}}.
\]
The remainder of the proof consists of a series of simple calculations, as in the proof of Proposition \ref{liminfderinf}. We will make one case explicit and leave the rest to the reader. Let $y>x$. Then, for all $y$ close enough to $x$, there exists an even positive integer $k$ such that
$y\in ([\ell_1, \ldots, \ell_{k+1}]_\alpha, [\ell_1, \ldots, \ell_{k-1}]_\alpha]$. Suppose that $\ell_{k+1}>1$. As in Figure 3.2 in the proof of the previous proposition, we can locate $y$ with greater precision, as follows. First suppose that $[\ell_1, \ldots, \ell_{k}, 1]_\alpha<y\leq [\ell_1, \ldots, \ell_{k-1}]_\alpha$. Then we have that
\begin{eqnarray*}
\frac{\theta_\alpha(y)-\theta_\alpha(x)}{y-x}&\leq&\frac{\theta_\alpha([\ell_1, \ldots, \ell_{k-1}]_\alpha)-\theta_\alpha([\ell_1, \ldots, \ell_{k}]_\alpha)}{[\ell_1, \ldots, \ell_k, 1]_\alpha-[\ell_1, \ldots, \ell_k, \ell_{k+1}]_\alpha}\\
&=&  \frac{2\cdot2^{-(\ell_1+\cdots +\ell_k)}}{a_{\ell_1}\ldots a_{\ell_{k}}(1-t_{\ell_{k+1}})}\ll \frac{2\cdot2^{-(\ell_1+\cdots +\ell_k)}}{a_{\ell_1}\ldots a_{\ell_{k}}}.
\end{eqnarray*}
Now suppose that there exists a positive integer $n\in\{1, 2, \ldots, \ell_{k+1}-2\}$ such that $[\ell_1, \ldots, \ell_{k}, n+1]_\alpha<y\leq [\ell_1, \ldots, \ell_{k}, n]_\alpha$. In that case, we calculate
\begin{eqnarray*}
\frac{\theta_\alpha(y)-\theta_\alpha(x)}{y-x}&\leq&\frac{\theta_\alpha([\ell_1, \ldots, \ell_{k}, n]_\alpha)-\theta_\alpha([\ell_1, \ldots, \ell_{k}]_\alpha)}{[\ell_1, \ldots, \ell_k, n+1]_\alpha-[\ell_1, \ldots, \ell_k, \ell_{k+1}]_\alpha}\\
&=&  \frac{2\cdot2^{-(\ell_1+\cdots +\ell_k+n)}}{a_{\ell_1}\ldots a_{\ell_{k}}(t_{n+1}-t_{\ell_{k+1}})}\ll \frac{2\cdot2^{-(\ell_1+\cdots +\ell_k)}}{a_{\ell_1}\ldots a_{\ell_{k}}},
\end{eqnarray*}
where in this instance the final inequality holds in the case that $\alpha$ is expansive of exponent $\tau$ or $\alpha$ is expanding with $\lim_{n\to\infty}t_n/t_{n+1}=\rho$ and $1<\rho<2$. The case that $\alpha$ is expanding and $\rho>2$ must be considered separately, but the calculation is similar and we leave it to the reader.

Next, suppose that $[\ell_1, \ldots, \ell_{k+1}]_\alpha<y \leq [\ell_1, \ldots, \ell_{k+1}-1]_\alpha$ and $\ell_{k+2}>1$. In this case, we have that the point $[\ell_1, \ldots, \ell_{k+1},1 ]$ lies to the right of $x$ and we obtain that
\begin{eqnarray*}
\frac{\theta_\alpha(y)-\theta_\alpha(x)}{y-x}&\leq&\frac{\theta_\alpha([\ell_1, \ldots, \ell_{k+1}-1]_\alpha)-\theta_\alpha([\ell_1, \ldots, \ell_{k}]_\alpha)}{[\ell_1, \ldots, \ell_{k +1}]_\alpha-[\ell_1, \ldots, \ell_k, \ell_{k+1}, 1]_\alpha}\\
&\ll&  \frac{2\cdot2^{-(\ell_1+\cdots +\ell_{k+1})}}{a_{\ell_1}\ldots a_{\ell_{k+1}}}.
\end{eqnarray*}
Finally, if $[\ell_1, \ldots, \ell_{k+1}]_\alpha<y \leq [\ell_1, \ldots, \ell_{k+1}-1]_\alpha$ and $\ell_{k+2}=1$, we have that
\[
y-x\geq [\ell_1, \ldots, \ell_{k +1}]_\alpha-[\ell_1, \ldots, \ell_k, \ell_{k+1}, 1, \ell_{k+3}]_\alpha=a_{\ell_1}\ldots a_{\ell_{k+1}}(1-a_1t_{\ell_{k+3}})\gg a_{\ell_1}\ldots a_{\ell_{k+1}}.
\]
So, in this case too, we obtain that
\[
\frac{\theta_\alpha(y)-\theta_\alpha(x)}{y-x}\ll \frac{2\cdot2^{-(\ell_1+\cdots +\ell_{k+1})}}{a_{\ell_1}\ldots a_{\ell_{k+1}}}.
\]

To finish the proof, we must consider the case $\ell_{k+1}=1$ and also do similar calculations for points $y$ such that $x>y$. Both of these are similar to what we have done above, thus we leave the remaining details to the reader.
\end{proof}

\begin{rem}

Let us end this section with some remarks concerning the paper \cite{PVB}. In there, the authors consider first the function $\Phi_{2, \tau}$, which, although this is not made explicit, conjugates the tent system with the map $T_{\tau}$ which is given, for $\tau>1$, by
\[
T_\tau(x):=\left\{
             \begin{array}{ll}
               \tau x, & \hbox{for $x\in[0, 1/\tau)$;} \\
               \frac{\tau x -1}{\tau-1}, & \hbox{for $x\in [1/\tau, 1]$.}
             \end{array}
           \right.
\]
This is nothing other than an ``untwisted'' $\alpha$-Farey map, where ``untwisted'' means that the right-hand branch of the map has a positive slope. Let us denote such maps by $F_{\tilde{\alpha}}$. In this case, the partition in question, say $\widetilde{\alpha_\tau}$,  is given by $t_n:= \tau^{-(n-1)}$ and $a_n:=(\tau-1)/\tau^n$. Notice that this is simply a specific example of an expanding partition, since it certainly satisfies the condition $\lim_{n\to\infty}t_n/t_{n+1}=\rho>1$; in fact, here $\rho=\tau$.  The associated untwisted $\tilde{\alpha_\tau}$-L\"uroth map has all positive slopes. In this case the $\tilde{\alpha_\tau}$-L\"uroth coding is given by
\[
x=[\tilde{\ell}_1, \tilde{\ell}_2, \tilde{\ell}_3,\ldots]_{\tilde{\alpha_\tau}}=
t_{\tilde{\ell}_1+1}+a_{\tilde{\ell}_1}t_{\tilde{\ell}_2+1}+a_{\tilde{\ell}_1}a_{\tilde{\ell}_2}t_{\tilde{\ell}_3+1}+\cdots=
\frac{1}{\tau^{\tilde{\ell}_1}}+\frac{\tau-1}{\tau^{\tilde{\ell}_1+\tilde{\ell_2}}}+
\frac{(\tau-1)^2}{\tau^{\tilde{\ell}_1+\tilde{\ell_2}+\tilde{\ell}_3}}+\cdots
\]
The map equivalent to $\theta_\alpha$ in this positive slope situation is the map $\theta_{\tilde{\alpha}}$, which is defined by
\[
\theta_{\tilde{\alpha}}(x):=\sum_{k=1}^{\infty}2^{-\sum_{i=1}^k\tilde{\ell}_i(x)}.
\]
(For more details, we refer to \cite{sarathesis}.) The map $\Phi_{2, \tau}$ in the paper \cite{PVB} coincides with the inverse of the map $\theta_{\tilde{\alpha_\tau}}$. They first show that $\Phi_{2, \tau}$ is singular and then, assuming the derivative of $\Phi_{2, \tau}$ at a point $x$ exists in a generalised sense, give a condition in terms of a certain constant $K=K(\tau):=\frac{-\log(\tau-1)}{\log(2/\tau)}$ for which the derivative at the point $x$ is either equal to zero or is infinite. The proof boils down to an equivalent statement to Lemma \ref{lemma1}, which in their case states that if $\Phi_{2, \tau}'(x)$ exists it must satisfy
\[
\Phi_{2, \tau}'(x)=\lim_{n\to \infty} \frac{\lambda(C_{\tilde{\alpha_\tau}}(\tilde{\ell}_1, \ldots, \tilde{\ell}_n))}{2^{-\sum_{i=1}^n a_{\tilde{\ell}_i}}}=\lim_{n\to \infty} \frac{(\tau-1)^n\cdot 2^{\sum_{i=1}^n a_{\tilde{\ell}_i}}}{\tau^{{\sum_{i=1}^n {\tilde{\ell}_i}}}}=
\lim_{n\to \infty} \left(\left(\frac{2}{\tau}\right)^{{\sum_{i=1}^n {\tilde{\ell}_i}}/n}(\tau-1)\right)^n.
\]
Then the constant $K$ is just the boundary point between the term $\left(\frac{2}{\tau}\right)^{{\sum_{i=1}^n {\tilde{\ell}_i}}/n}(\tau-1)$ being strictly less than 1 or strictly greater than 1.

They then go on to generalise this by conjugating two expanding untwisted $\tilde{\alpha}$-Farey systems, one given by $\tilde{\alpha_\tau}$ with $t_n:=\tau^{-(n-1)}$ and the other given by $\tilde{\alpha_\beta}$ with $t_n:=\beta^{-(n-1)}$. They obtain a similar result for the map $\Phi_{\beta, \tau}$ which is the topological conjugacy map between the systems $F_{\tilde{\alpha_\beta}}$ and $F_{\tilde{\alpha_\tau}}$. Of course, $\Phi_{\beta, \tau}$ coincides with the composition $\theta_{\tilde{\alpha_\tau}}^{-1}\circ \theta_{\tilde{\alpha_\beta}}$. It may be interesting to consider conjugating homeomorphisms between two arbitrary $\alpha$-Farey maps, or even the case of two general expansive or expanding partitions (for either maps with positive or negative slopes).
\end{rem}

\section{Multifractal formalism for the $\alpha$-Farey system and the derivative of $F_\alpha$}

Let us now recall the outcome of the multifractal formalism for the $\alpha$-Farey system obtained in \cite{KMS}. Here, we must again assume that the partition $\alpha$ is either expanding or expansive of exponent $\tau\geq0$ and eventually decreasing (which means that for all sufficiently large $n$, we have that $a_n>a_{n+1}$), so this assumption will be made for every partition from here on.   For both the $\alpha$-L\"uroth and $\alpha$-Farey systems, the fractal-geometric description of the Lyapunov spectra were obtained by employing the general multifractal results of Jaerisch and Kesseb\"ohmer \cite{JaerischKess09}. First,  let the $\alpha$-Farey free-energy function $v:\R \to \R$ be defined by
\[
v(u):= \inf\left\{r\in\R: \sum_{n=1}^\infty a_n^u\exp(-rn)\leq 1\right\}.
\]
Let us also remind the reader that the Lyapunov exponent of a differentiable map $S:\U\to \U$ at a point $x\in\U$ is defined, provided the limit exists, by
\[
\Lambda(S, x):=\lim_{n\to \infty}\frac1n \sum_{k=0}^{n-1}\log|S'(S^k(x))|.
\]
The following result can be found in \cite{KMS}. (Here we have omitted the discussion of phase transitions and the boundary points of the spectrum, as they are not relevant to this paper.)

\noindent{\bf Theorem.} \cite[Theorem 3]{KMS} {\em Let $\alpha$ be either expanding or expansive of exponent $\tau\geq0$ and eventually decreasing. Then, where $s_-:=\inf\{-\log(a_n)/n:n\in\N\}$ and $s_+:=\sup\{-\log(a_n)/n:n\in\N\}$, we have that if $s\in (s_-, s_+)$, then
\[
\dim_{\mathrm{H}}(\{x\in \U:\Lambda(F_\alpha, x)=s\})=\inf_{u\in\R}\left\{u+s^{-1}v(u)\right\}.
\]}
We observe that it is equivalent to consider the free-energy function \[t(v):=\inf\left\{u\in\R: \sum_{n=1}^\infty a_n^u\exp(-nv)\leq 1\right\},\] in line with \cite{JaerischKess09}. The outcome then for the $\alpha$-Farey spectrum is that $\dim_{\mathrm{H}}(\{x\in \U:\Lambda(F_\alpha, x)=s\})=t^*(s):=\inf_{v\in\R}\{t(v)+vs^{-1}\}$.

In light of the results of the previous section, as already mentioned in the introduction, we can split the unit interval into three disjoint subsets, namely, $[0,1]=\Theta_0\cup \Theta_\infty\cup\Theta_\sim$. Recall that these sets are defined by $\Theta_0:=\{x\in \U: \theta_\alpha'(x)=0\}$, $\Theta_\infty:=\{x\in \U: \theta_\alpha'(x)=\infty\}$ and, finally, $\Theta_{\sim}:=\U\setminus \Theta_0\cup\Theta_\infty$. Observe that $\Theta_\sim$ can also be described as the set of points in $[0,1]$ at which the derivative of $\theta_\alpha$ does not exist. We already have that $\lambda(\Theta_0)=\dim_{\mathrm{H}}(\Theta_0)=1$. The aim of this section is to prove Theorem \ref{mainthm}, which describes the Hausdorff dimensions of the other two sets.
First, for $s\geq0$, recall the definition of the set $\mathcal{L}(s)$ from the introduction:
\[
\mathcal{L}(s):=\left\{x\in \U:\lim_{n\to\infty}\frac{\log(\lambda(I^{(\alpha)}_n(x)))}{-n}=s\right\}.
\]

Let us now prove the following useful lemma.

\begin{lem}\label{ctblsets}
For each $s\geq0$, we have that
\[
\dim_{\mathrm{H}}\left(\left\{x\in \U:\Lambda(F_\alpha, x)=s\right\}\right)=\dim_{\mathrm{H}}\left(\mathcal{L}(s)\right).
\]
\end{lem}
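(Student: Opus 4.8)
The plan is to show that the set $\mathcal L(s)$ and the Lyapunov level set $\{x\in\U:\Lambda(F_\alpha,x)=s\}$ differ only by a countable set, from which equality of Hausdorff dimensions is immediate (Hausdorff dimension ignores countable sets and is monotone). First I would compute the derivative of $F_\alpha$ along an orbit. Since $F_\alpha$ acts on each atom $A_n$ (for $n\ge 2$) as an affine map with slope $a_{n-1}/a_n$ and on $A_1$ with slope $1/a_1$, the Birkhoff sum $\sum_{k=0}^{n-1}\log|F_\alpha'(F_\alpha^k(x))|$ telescopes nicely when read through the $\alpha$-Farey coding. Concretely, for an $\alpha$-irrational $x=\langle x_1,x_2,\ldots\rangle_\alpha$, the first $n$ symbols determine the cylinder $I_n^{(\alpha)}(x)$, and one checks that $\prod_{k=0}^{n-1}|F_\alpha'(F_\alpha^k(x))| = c_n / \lambda(I_n^{(\alpha)}(x))$, where $c_n$ is a bounded factor (in fact $c_n\in\{1,a_1/t_2,\ldots\}$-type bounded ratios coming from whether the coding word ends in a $1$ or a $0$): more precisely, using the Lebesgue-measure formulas for $\alpha$-Farey cylinders recalled in Section 2, $\lambda(I_n^{(\alpha)}(x))$ equals $a_{\ell_1}\cdots a_{\ell_k}$ (if the word ends in a $1$) or $a_{\ell_1}\cdots a_{\ell_k}t_{m+1}$ (if it ends in $0^m$), and the derivative product matches this up to a factor that stays in a fixed compact subset of $(0,\infty)$ because $\alpha$ is eventually decreasing and expanding/expansive.

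Given that identification, for an $\alpha$-irrational $x$ we have
\[
\frac1n\sum_{k=0}^{n-1}\log|F_\alpha'(F_\alpha^k(x))| = \frac{\log c_n}{n} + \frac{-\log\lambda(I_n^{(\alpha)}(x))}{n},
\]
and since $\log c_n$ is bounded, $\log c_n/n\to 0$; hence the Lyapunov exponent $\Lambda(F_\alpha,x)$ exists if and only if $\lim_n \frac{-\log\lambda(I_n^{(\alpha)}(x))}{n}$ exists, and then they are equal. Therefore for every $s\ge 0$,
\[
\mathcal L(s)\cap\{\alpha\text{-irrationals}\} = \{x\in\U:\Lambda(F_\alpha,x)=s\}\cap\{\alpha\text{-irrationals}\}.
\]
The set of $\alpha$-rational numbers is countable, so $\mathcal L(s)$ and $\{x:\Lambda(F_\alpha,x)=s\}$ differ by at most a countable set. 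Since $\dim_{\mathrm H}$ is unchanged by adding or removing countably many points, the two Hausdorff dimensions coincide, which is the claim.

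The main obstacle, and the step deserving care, is the bookkeeping that shows the derivative product of $F_\alpha$ along an orbit equals $\lambda(I_n^{(\alpha)}(x))^{-1}$ up to a uniformly bounded multiplicative error. One has to track how the slopes on $A_1$ (namely $1/a_1$) and on $A_n$, $n\ge2$ (namely $a_{n-1}/a_n$) telescope across a block $0^{\ell-1}1$ in the $\alpha$-Farey code corresponding to one $\alpha$-L\"uroth digit $\ell$: the product over such a block should collapse to $1/a_\ell$, matching the $\alpha$-L\"uroth cylinder measure, while a trailing incomplete block $0^m$ contributes a factor comparable to $1/t_{m+1}$ up to constants. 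The eventually-decreasing hypothesis on $\alpha$ (plus expanding/expansive) is what keeps these correction factors — ratios like $a_{n-1}/a_n$ evaluated at the "current" digit — bounded above and below, so that their $n$-th root tends to $1$. Once this uniform comparison is established the rest is the soft countable-set argument above. I would also remark that on the countable exceptional set the limit defining $\mathcal L(s)$ may still be computed directly from the cylinder formula if desired, but it is unnecessary for the dimension statement.
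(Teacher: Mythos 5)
Your overall strategy is exactly the paper's: both arguments reduce the lemma to the observation that $\mathcal{L}(s)$ and $\left\{x:\Lambda(F_\alpha,x)=s\right\}$ coincide up to a countable set, after which invariance of Hausdorff dimension under countable modifications finishes the proof. The paper simply outsources the coincidence statement to \cite[Proposition 4.2 and Lemma 4.1]{KMS} (via the intermediate quantity $\Pi(L_\alpha,x)$), whereas you re-derive it by telescoping the derivative of $F_\alpha$ along the $\alpha$-Farey coding. The telescoping over a complete block $0^{\ell-1}1$ collapsing to $1/a_\ell$ is correct.

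However, your key intermediate claim --- that the correction factor $c_n$ relating $\prod_{k=0}^{n-1}|F_\alpha'(F_\alpha^k(x))|$ to $\lambda\bigl(I_n^{(\alpha)}(x)\bigr)^{-1}$ stays in a fixed compact subset of $(0,\infty)$ --- is false in general. Writing $n=\ell_1+\cdots+\ell_k+m$ with $0\le m<\ell_{k+1}$, the derivative product over the trailing incomplete block is $a_{\ell_{k+1}-m}/a_{\ell_{k+1}}$, while the cylinder contributes the factor $t_{m+1}$, so $c_n=a_{\ell_{k+1}-m}\,t_{m+1}/a_{\ell_{k+1}}$. For the harmonic partition (expansive of exponent $1$ and eventually decreasing, hence within the standing hypotheses) one has $t_j=1/j$ and $a_j=1/(j(j+1))$, so taking $m=\ell_{k+1}-1$ gives $c_n=a_1t_{\ell_{k+1}}/a_{\ell_{k+1}}\asymp\ell_{k+1}$, which is unbounded along any point with unbounded digits. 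So the step ``since $\log c_n$ is bounded, $\log c_n/n\to0$'' does not stand as written. The conclusion is nevertheless salvageable, because all you need is $n^{-1}\log c_n\to 0$: under the expansive/expanding and eventually decreasing assumptions one has $|\log a_j|=O(\log j)$ and $|\log t_j|=O(\log j)$, and since $n\ge m$, a short case analysis (according to whether $m$ is small or comparable to $\ell_{k+1}$) gives $|\log c_n|=O(\log n)+O(1)$ along the relevant indices, whence $n^{-1}\log c_n\to0$. You should replace the boundedness claim by this weaker estimate; this is in effect the content of the cited \cite[Lemma 4.1]{KMS}. With that repair your proof is a valid, self-contained version of the paper's argument.
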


\begin{proof}
Firstly, from Proposition 4.2 in \cite{KMS}, where \[\Pi(L_{\alpha},x):=\lim_{n\to\infty}\frac{\sum_{k=1}^n\log(a_{\ell_{k}(x)})}{\sum_{k=1}^n\ell_k(x)},\] we have that the sets
\[ \left\{ x\in\U:\Pi(L_{\alpha},x)=s\right\} \hbox{ and }
\left\{ x\in\U:\Lambda(F_{\alpha},x)=s\right\} \]
 coincide up to a countable set of points. An almost identical argument (using \cite[Lemma 4.1~(1)]{KMS} as opposed to \cite[Lemma 4.1~(3)]{KMS}), shows that the same statement is true with the set
$\left\{ x\in\U:\Lambda(F_{\alpha},x)=s\right\} $ replaced by the set $\mathcal{L}(s)$. Combining these two statements yields the result.
\end{proof}

\begin{rem}
Notice that it follows immediately from Proposition \ref{ctblsets} that $\dim_{\mathrm{H}}(\mathcal{L}(s))=t^*(s)$.
\end{rem}


\begin{prop}\label{6.1}
\
\begin{itemize}
  \item[(a)] If $s\in(\log2, s_+]$, then
  \[
  \mathcal{L}(s)\subset \Theta_\infty.
  \]
  \item[(b)] If $s\in[s_-, \log2)$,  then
  \[
  \mathcal{L}(s)\subset \Theta_0.
  \]
  \item[(c)]
  \[
  \left\{x\in\U:\liminf_{n\to\infty} \frac{\sum_{i=1}^n\log(a_{\ell_{i}(x)})}{-\sum_{i=1}^n \ell_{i}(x)}<\log2<\limsup_{n\to\infty} \frac{\sum_{i=1}^n\log(a_{\ell_{i}(x)})}{-\sum_{i=1}^n \ell_{i}(x)}\right\}\subset \Theta_\sim.
  \]
\end{itemize}
\end{prop}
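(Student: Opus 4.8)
The plan is to reduce all three parts to the asymptotics of the single ratio $2^{-n}/\lambda(I_n^{(\alpha)}(x))$, which by Lemma~\ref{lemma1}, Corollary~\ref{liminfderinfcor} and Proposition~\ref{limitto0} controls $\theta_\alpha'(x)$. First I would note that every set appearing in the statement consists of $\alpha$-irrational points: for $\mathcal{L}(s)$ this is forced because $I_n^{(\alpha)}(\cdot)$ is only defined at $\alpha$-irrational points, and for the set in part~(c) the L\"uroth sequence $(\ell_i(x))_{i\ge1}$ must be infinite. So I would fix an $\alpha$-irrational $x=[\ell_1,\ell_2,\ldots]_\alpha$, put $S_k:=\ell_1+\cdots+\ell_k$, abbreviate $\beta_n:=\log(\lambda(I_n^{(\alpha)}(x)))/(-n)$ (so that $x\in\mathcal{L}(s)$ exactly when $\beta_n\to s$), and record the identity
\[
\frac{2^{-n}}{\lambda(I_n^{(\alpha)}(x))}=\exp\!\bigl(n(\beta_n-\log2)\bigr).
\]
Since the $(S_k)$-th $\alpha$-Farey cylinder of $x$ equals the L\"uroth cylinder $C_\alpha(\ell_1,\ldots,\ell_k)$, one has $\lambda(I_{S_k}^{(\alpha)}(x))=a_{\ell_1}\cdots a_{\ell_k}$, hence $\beta_{S_k}=\bigl(\sum_{i=1}^k\log a_{\ell_i}\bigr)/(-S_k)$, which is precisely the quantity whose $\liminf$ and $\limsup$ occur in part~(c).

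For part~(a) I would argue as follows. Let $s\in(\log2,s_+]$ and $x\in\mathcal{L}(s)$; then $\beta_n\to s>\log2$, so $\beta_n-\log2\ge\tfrac12(s-\log2)>0$ for large $n$, and the displayed identity gives $2^{-n}/\lambda(I_n^{(\alpha)}(x))\to\infty$. Since $\mu_\alpha(I_n^{(\alpha)}(x))=2^{-n}$, Corollary~\ref{liminfderinfcor} then yields $\theta_\alpha'(x)=\infty$, i.e. $x\in\Theta_\infty$. For part~(b) I would verify the hypothesis of Proposition~\ref{limitto0}: let $s\in[s_-,\log2)$ and $x\in\mathcal{L}(s)$. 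By the computation carried out in the proof of that proposition,
\[
\frac{\mu_\alpha\!\bigl([r_k^{(\alpha)}(x),r_{k+1}^{(\alpha)}(x)]_\pm\bigr)}{\lambda\!\bigl([r_k^{(\alpha)}(x),r_{k+1}^{(\alpha)}(x)]_\pm\bigr)}\cdot\frac{t_{\ell_{k+1}}}{a_{\ell_{k+1}}}=\frac{2\cdot2^{-S_{k+1}}}{a_{\ell_1}\cdots a_{\ell_{k+1}}}=2\exp\!\bigl(S_{k+1}(\beta_{S_{k+1}}-\log2)\bigr);
\]
as $(\beta_{S_{k+1}})_k$ is a subsequence of $(\beta_n)_n$ it converges to $s<\log2$, while $S_{k+1}\to\infty$, so this quantity tends to $0$, and Proposition~\ref{limitto0} gives $\theta_\alpha'(x)=0$, i.e. $x\in\Theta_0$.

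For part~(c) I would take $x$ in the displayed set and write $q_k:=\beta_{S_k}$, so that $\liminf_k q_k<\log2<\limsup_k q_k$. Along a subsequence on which $q_k\le\log2-\delta$ for some $\delta>0$, the identity $2^{-S_k}/\lambda(I_{S_k}^{(\alpha)}(x))=\exp(S_k(q_k-\log2))$ gives values $\le e^{-\delta S_k}\to0$; along a subsequence on which $q_k\ge\log2+\delta'$ for some $\delta'>0$ (which exists since $\limsup_k q_k>\log2$, the value $+\infty$ being permitted) it gives values $\ge e^{\delta'S_k}\to\infty$. Thus $\bigl(2^{-n}/\lambda(I_n^{(\alpha)}(x))\bigr)_n$ has one subsequence tending to $0$ and another tending to $\infty$, hence no limit in $[0,\infty]$, so by the contrapositive of Lemma~\ref{lemma1}(a) the derivative $\theta_\alpha'(x)$ does not exist in a generalised sense, whence $x\in\U\setminus(\Theta_0\cup\Theta_\infty)=\Theta_\sim$.

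The work here is light because the substantive analytic content is already in Propositions~\ref{liminfderinf} and~\ref{limitto0}; the main (and really the only) obstacle is to route parts~(a) and~(b) through these \emph{unconditional} statements rather than through Lemma~\ref{lemma1}, whose conclusion presupposes that $\theta_\alpha'(x)$ exists. For part~(b) in particular one must notice that the factor $t_{\ell_{k+1}}/a_{\ell_{k+1}}$ built into the hypothesis of Proposition~\ref{limitto0} is exactly what converts the $\alpha$-Farey cylinder ratio at level $S_{k+1}-1$ into the $\alpha$-L\"uroth cylinder ratio at level $S_{k+1}$, so that the required limit becomes a transparent subsequential consequence of $\beta_n\to s$.
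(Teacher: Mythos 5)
Your proposal is correct and follows essentially the same route as the paper: part (a) via Corollary \ref{liminfderinfcor}, part (b) by verifying the hypothesis of Proposition \ref{limitto0} along the L\"uroth subsequence $S_k=\ell_1+\cdots+\ell_k$, and part (c) by producing subsequences of $2^{-n}/\lambda(I_n^{(\alpha)}(x))$ tending to $0$ and to $\infty$ and invoking the contrapositive of Lemma \ref{lemma1}(a). Your phrasing of (b) and (c) through the single quantity $\beta_n$ is in fact slightly cleaner than the paper's write-up, but the underlying argument is the same.
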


\begin{proof}
Let $x\in \mathcal{L}(s)$ be given. Then, for each $\varepsilon>0$, there exists $N_\varepsilon\in \N$ such that for all $n\geq N_\varepsilon$,
\[
n(s-\varepsilon)\leq \log\left(\frac1{\lambda(I_n^{(\alpha)}(x))}\right)\leq n(s+\varepsilon).
\]
In other words, recalling that $\mu_\alpha(I_n^{(\alpha)}(x))=2^{-n}$, we have that
\[
e^{-n(s+\varepsilon-\log2)}\leq \frac{\lambda(I_n^{(\alpha)}(x))}{\mu_\alpha(I_n^{(\alpha)}(x))}\leq e^{-n(s-\varepsilon-\log2)},
\]
for all $n\geq N_{\varepsilon}$. Thus, if $s\in (\log2, s_+]$, we deduce that
\[
\lim_{n\to\infty} \frac{\lambda(I_n^{(\alpha)}(x))}{\mu_\alpha(I_n^{(\alpha)}(x))}=0.
\]
By Corollary \ref{liminfderinfcor}, we then infer that $\theta_\alpha'(x)=\infty$ and so $x\in \Theta_\infty$. This proves part (a).

In order to prove part (b), first notice  (where the first equality can be proved similarly  to Lemma \ref{ctblsets} and the second comes from the proof of Lemma \ref{ctblsets}), that
\[
\lim_{n\to\infty} \frac{-\log(a_{\ell_1}\ldots a_{\ell_n}t_{\ell_{n+1}})}{\ell_1+\cdots +\ell_{n+1}}=\lim_{n\to\infty} \frac{-\log(a_{\ell_1}\ldots a_{\ell_{n+1}})}{\ell_1+\cdots +\ell_{n+1}}=\lim_{n\to\infty} \frac{\log(\lambda(I_n^{(\alpha)}(x)))}{-n}=s<\log2.
\]
Using this observation, a straightforward calculation along the lines of  that done for part (a) shows that we have $\lim_{n\to\infty}2^{-(\ell_1+\cdots +\ell_n)}/a_{\ell_1}\ldots a_{\ell_n}=0$. In light of Proposition \ref{limitto0}, we obtain that $\theta_\alpha'(x)=0$ and this finishes the proof of part (b).

Finally, to prove part (c), one immediately verifies that if $\liminf_{n\to\infty} \frac{\sum_{i=1}^n\log(a_{\ell_{i}(x)})}{-\sum_{i=1}^n \ell_{i}(x)}<\log2$, then there exists $0<c<1$ such that
\[
\liminf_{n\to\infty} \frac{a_{\ell_{1}(x)}\ldots a_{\ell_{n}(x)}}{2^{-\sum_{i=1}^n \ell_{i}(x)}}\leq e^c.
\]
Similarly, if $\limsup_{n\to\infty} {\sum_{i=1}^n\log(a_{\ell_{i}(x)})}/{(-\sum_{i=1}^n \ell_{i}(x))}>\log2$, then there exists $C>1$ such that
\[
\limsup_{n\to\infty} \frac{a_{\ell_{1}(x)}\ldots a_{\ell_{n}(x)}}{2^{-\sum_{i=1}^n \ell_{i}(x)}}\geq e^C.
\]
In other words, the limit as $n$ tends to infinity  of the sequence $\left(({a_{\ell_{1}(x)}\ldots a_{\ell_{n}(x)}})/(2^{-\sum_{i=1}^n \ell_{i}(x)})\right)_{n\geq1}$ does not exist. Therefore, the limit of the sequence $\left(\lambda(I_n^{(\alpha)}(x))/\mu_\alpha(I_n^{(\alpha)}(x))\right)_{n\geq1}$ does not exist either, and, in light of Lemma \ref{lemma1}, we have that the derivative $\theta_\alpha'(x)$ also cannot exist. This shows that $x\in\Theta_\sim$ and hence finishes the proof.
\end{proof}


For the next proposition, we define:
\begin{eqnarray*}
\mathcal{L}^{*}(s) & := & \left\{ x\in\mathcal{U}:\limsup_{n\rightarrow\infty}\frac{\log(a_{\ell_{1}(x)}\ldots a_{\ell_{n}(x)})}{{-\sum_{i=1}^n \ell_{i}(x)}}\geq s\right\} ,\\
\mathcal{L}_{*}(s) & := & \left\{ x\in\mathcal{U}:\liminf_{n\rightarrow\infty}\frac{\log(a_{\ell_{1}(x)}\ldots a_{\ell_{n}(x)})}{{-\sum_{i=1}^n \ell_{i}(x)}}\geq s\right\} ,\\
\mathcal{L}\left(s,t\right) & := & \left\{ x\in\mathcal{U}:\liminf_{n\to\infty}\frac{\log(a_{\ell_{1}(x)}\ldots a_{\ell_{n}(x)})}{{-\sum_{i=1}^n \ell_{i}(x)}}\leq s,\limsup_{n\to\infty}\frac{\log(a_{\ell_{1}(x)}\ldots a_{\ell_{n}(x)})}{{-\sum_{i=1}^n \ell_{i}(x)}}\geq t\right\} .\end{eqnarray*}

\begin{prop}\label{6.4}
\
\begin{itemize}
  \item [(a)] For each $s\in (s_-, s_+)$, we have that
\[
\dim_{\mathrm{H}}\left(\mathcal{L}_*(s)\right)=\dim_{\mathrm{H}}\left(\mathcal{L}^*(s)\right)=\dim_{\mathrm{H}}\left(\mathcal{L}(s)\right).
\]
  \item [(b)] For each $s_-<s_0\leq s_1<s_+$, we have that
\[
\dim_{\mathrm{H}}\left(\mathcal{L}(s_0, s_1)\right)=\dim_{\mathrm{H}}\left(\mathcal{L}(s_1)\right).
\]
\end{itemize}

\end{prop}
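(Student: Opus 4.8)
First I would record, via Lemma~\ref{ctblsets} and the Remark after it, that $\dim_{\mathrm H}\mathcal{L}(s)=t^{*}(s)$ and that the quantity $\Pi_{n}(x):=\log(a_{\ell_{1}(x)}\cdots a_{\ell_{n}(x)})/(-\sum_{i=1}^{n}\ell_{i}(x))$ underlying $\mathcal{L}_{*},\mathcal{L}^{*}$ and $\mathcal{L}(\cdot,\cdot)$ agrees, off a countable (hence dimension-irrelevant) set, with $\log(\lambda(I_{n}^{(\alpha)}(x)))/(-n)$; consequently $\mathcal{L}(s)\subseteq\mathcal{L}_{*}(s)\subseteq\mathcal{L}^{*}(s)$ and $\mathcal{L}(s_{0},s_{1})\subseteq\mathcal{L}^{*}(s_{1})$ up to countable sets. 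So the proposition reduces to two bounds: \textbf{(I)}~$\dim_{\mathrm H}\mathcal{L}^{*}(s)\le t^{*}(s)$, which gives all of (a) and, with the last inclusion, the upper bound in (b); and \textbf{(II)}~$\dim_{\mathrm H}\mathcal{L}(s_{0},s_{1})\ge t^{*}(s_{1})$. I would prove (I) for $s\ge s^{*}$ and (II) for $s^{*}\le s_{0}\le s_{1}$, where $s^{*}:=(\sum_{n}a_{n}\log(1/a_{n}))/(\sum_{n}na_{n})=-1/t'(0)$ is the point at which $t^{*}$ attains its maximum value~$1$; this is precisely the regime relevant to Theorem~\ref{mainthm}, since the Kullback--Leibler inequality $\sum_{n}a_{n}\log(a_{n}2^{n})\ge0$ forces $s^{*}<\log2$ for every non-dyadic admissible partition.

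\textbf{Estimate (I): a covering argument.} Fix $\varepsilon>0$ and set $E_{n}:=\{\omega\in\N^{n}:\log(a_{\omega_{1}}\cdots a_{\omega_{n}})/(-\sum_{i}\omega_{i})>s-\varepsilon\}$. Then $\mathcal{L}^{*}(s)\subseteq\bigcap_{N\ge1}\bigcup_{n\ge N}\bigcup_{\omega\in E_{n}}C_{\alpha}(\omega)$, and since $C_{\alpha}(\omega)$ is an interval of length $a_{\omega_{1}}\cdots a_{\omega_{n}}$, for each $N$ one has $\mathcal{H}^{u}_{\delta_{N}}(\mathcal{L}^{*}(s))\le\sum_{n\ge N}\sum_{\omega\in E_{n}}(a_{\omega_{1}}\cdots a_{\omega_{n}})^{u}$ with $\delta_{N}\to0$. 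For $\omega\in E_{n}$ one has $a_{\omega_{1}}\cdots a_{\omega_{n}}\le e^{-(s-\varepsilon)\sum_{i}\omega_{i}}$; combining this with $\sum_{i}\omega_{i}\ge n$, the normalisation $\sum_{\omega\in\N^{n}}(a_{\omega_{1}}\cdots a_{\omega_{n}})^{t(v)}e^{-v\sum_{i}\omega_{i}}=1$ (valid for every $v$ in the domain of $t$, there being no phase transition), and the splitting $(a_{\omega_{1}}\cdots a_{\omega_{n}})^{u}=(a_{\omega_{1}}\cdots a_{\omega_{n}})^{t(v)}(a_{\omega_{1}}\cdots a_{\omega_{n}})^{u-t(v)}$ gives $\sum_{\omega\in E_{n}}(a_{\omega_{1}}\cdots a_{\omega_{n}})^{u}\le\varrho^{n}$ with $\varrho<1$ whenever $t(v)\le u$ and $u>t(v)+v/(s-\varepsilon)$. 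Taking $v$ to be the minimiser in $t^{*}(s-\varepsilon)=\inf_{v}(t(v)+v/(s-\varepsilon))$, which is nonnegative and interior because $s-\varepsilon>s^{*}$ for $\varepsilon$ small (so automatically $u>t^{*}(s-\varepsilon)\ge t(v)$), the right-hand side tends to $0$ for every $u>t^{*}(s-\varepsilon)$, hence $\dim_{\mathrm H}\mathcal{L}^{*}(s)\le t^{*}(s-\varepsilon)$. Letting $\varepsilon\downarrow0$ and using continuity (real-analyticity, from \cite{KMS}) of $t^{*}$ on $(s_{-},s_{+})$ yields (I).

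\textbf{Estimate (II): a block construction.} Fix $\eta>0$. By the multifractal analysis of \cite{KMS}, pick an ergodic (sub-)Bernoulli measure $\mu_{1}$ on the Lüroth codes with ratio $\int\log a_{\ell_{1}}\,d\mu_{1}/(-\int\ell_{1}\,d\mu_{1})$ within $\eta$ of $s_{1}$ and $\dim_{\mathrm H}\mu_{1}\ge t^{*}(s_{1})-\eta$, and an ergodic $\mu_{0}$ with ratio $\le s_{0}$ and $\dim_{\mathrm H}\mu_{0}\ge t^{*}(s_{0})-\eta\ge t^{*}(s_{1})-\eta$ (the last inequality since $s^{*}\le s_{0}\le s_{1}$ and $t^{*}$ decreases past $s^{*}$). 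I would then construct a Cantor set $K\subseteq\mathcal{L}(s_{0},s_{1})$ whose points have Lüroth code alternating a $\mu_{1}$-generic block of length $N_{j}$ with a $\mu_{0}$-generic block of length $N'_{j}$, with $N_{j},N'_{j}\uparrow\infty$ arranged so that at the ends of the $\mu_{1}$-blocks the cumulative $\Pi_{n}$ exceeds $s_{1}-1/j$ (forcing $\limsup_{n}\Pi_{n}\ge s_{1}$ on $K$), at the ends of the $\mu_{0}$-blocks it drops below $s_{0}+1/j$ (forcing $\liminf_{n}\Pi_{n}\le s_{0}$ on $K$), and each family of blocks is the dominant one at its own scales. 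Equipping $K$ with the measure built from $\mu_{1}$ and $\mu_{0}$ along the respective blocks, a running-average estimate shows its lower local dimension is at least $t^{*}(s_{1})-2\eta$ at every point, so the mass distribution principle gives $\dim_{\mathrm H}K\ge t^{*}(s_{1})-2\eta$. Letting $\eta\downarrow0$ proves (II) and completes (b).

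\textbf{Expected main obstacle.} The hard part will be the bookkeeping in (II): to move the cumulative ratio $\Pi_{n}$ down to $s_{0}$ (or up to $s_{1}$), the block being appended must accumulate Farey time $\sum\omega_{i}$ comparable to everything before it, and one must verify that its contribution to $-\log$ of the cylinder diameter can nevertheless be kept a vanishing proportion of the total, so that neither alternating family degrades the other's dimension; this is precisely where $s^{*}\le s_{0}$ (hence $t^{*}(s_{0})\ge t^{*}(s_{1})$) is needed, for then the $\mu_{1}$-blocks are the bottleneck and $\dim_{\mathrm H}K=t^{*}(s_{1})$. One should note that without $s^{*}\le s_{0}$ the conclusion fails---e.g.\ $\mathcal{L}_{*}(s)\supseteq\mathcal{L}(s^{*})$ has full Lebesgue measure when $s<s^{*}$, so $\dim_{\mathrm H}\mathcal{L}_{*}(s)=1>t^{*}(s)$ there---so the statements are to be read for $s,s_{0},s_{1}\ge s^{*}$, which is all that is used in the sequel. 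A subsidiary point, invoked in (I), is checking that the minimiser of $t(v)+v/s$ is nonnegative exactly when $s\ge s^{*}$, which is what lets the covering exponent be taken arbitrarily close to $t^{*}(s)$.
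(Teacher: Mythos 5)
Your proposal follows essentially the same route as the paper's proof: part (a) via the inclusions $\mathcal{L}(s)\subseteq\mathcal{L}_*(s)\subseteq\mathcal{L}^*(s)$ together with a cylinder-covering estimate played off against the free energy $t(v)$, and part (b) via $\mathcal{L}(s_0,s_1)\subseteq\mathcal{L}^*(s_1)$ for the upper bound and, for the lower bound, a Cantor set of points whose L\"uroth code alternates $\mathbb{P}_{s_1}$- and $\mathbb{P}_{s_0}$-generic blocks, carrying a concatenated measure to which the mass distribution principle is applied --- which is exactly the paper's Egoroff/Kolmogorov-consistency construction. The one genuine difference is your restriction to $s\geq s^*:=\bigl(\sum_n a_n\log(1/a_n)\bigr)/\bigl(\sum_n na_n\bigr)$, and this is a sound refinement rather than a loss of generality: for expanding partitions and $s_-<s<s^*$ the statement as printed does fail, since $\mathcal{L}_*(s)$ then contains the Lebesgue-typical points and so has dimension $1>t^*(s)$, and the paper's own argument tacitly lives in your regime --- its covering step, replacing $t(v)+vs^{-1}+\varepsilon$ by $t(v)+v\,S_{n_k}\psi(x)/S_{n_k}\phi(x)+\varepsilon/2$ in the exponent, requires $v\geq0$, and its proof of (b) invokes $\dim_{\mathrm{H}}(\mathcal{L}(s_1))\leq\dim_{\mathrm{H}}(\mathcal{L}(s_0))$ without justification, which is precisely your hypothesis $t^*(s_0)\geq t^*(s_1)$. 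As you observe, the Kullback--Leibler inequality gives $s^*<\log2$ for every non-dyadic partition, so nothing is lost for Theorem \ref{mainthm} (and in the expansive case $t(v)=\infty$ for $v<0$, so no counterexample arises there). One small correction to your anticipated bookkeeping in (II): you cannot keep the $\mu_0$-block's contribution to $-\log$ of the cylinder diameter a vanishing proportion of the total, since dragging the running ratio down to $s_0$ forces that contribution to be comparable to everything preceding it; the resolution, as in the paper, is that $-\log\mu(C_{n_k}(x))/S_{n_k}\phi(x)$ is a convex combination of the previous ratio and the new block's ratio, and both are at least $t^*(s_1)-\varepsilon$ exactly because $t^*(s_0)\geq t^*(s_1)$, so no block degrades the estimate.
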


\begin{proof}
Towards part (a), the inequality $\dim_{\mathrm{H}}\left(\mathcal{L}_*(s)\right)\leq\dim_{\mathrm{H}}\left(\mathcal{L}^*(s)\right)$ is immediate from the fact that $\mathcal{L}_*(s)\subset\mathcal{L}^*(s)$. Also, notice that $\mathcal{L}(s)\subset\mathcal{L}_*(s)$, so we have the inequality $\dim_{\mathrm{H}}\left(\mathcal{L}(s)\right)\leq\dim_{\mathrm{H}}\left(\mathcal{L}_*(s)\right)$. To finish the proof of part (a), we will show, via a covering argument, that $\dim_{\mathrm{H}}\left(\mathcal{L}^*(s)\right)\leq t^*(s)$. For ease of exposition, let us define the two potential functions $\phi$ and $\psi$ by setting
\[
\phi(x):=\log(a_n)\ \text{ and }\ \psi(x):=-n, \ \text{ for }x\in A_n.
\]
Then,  for each $x\in \mathcal{L}^*(s)$ and every $\varepsilon>0$, we can choose $n_{k(x, \delta)}$ such that for all $k\geq k(x, \delta)$ we have that
\[
\diam(C_\alpha(\ell_1(x), \ldots, \ell_{n_k}(x)))=a_{\ell_1(x)}\ldots a_{\ell_{n_k}(x)}<\delta
\]
and
\[
0<\frac{S_{n_k}\psi(x)}{S_{n_k}\phi(x)}\leq \frac1s+\frac\varepsilon2,
\]
where the notation $S_n\phi$ denotes the $n$-th Birkhoff sum $\sum_{k=0}^{n-1}\phi\circ L_\alpha^k$. Thus, removing duplicates as necessary, we can cover the set $\mathcal{L}^*(s)$ with the family $\mathcal{A}_\delta$ of at most countably many cylinder sets, where
\[
\mathcal{A}_\delta:=\left\{C_i:=C_{\alpha}\left(\ell_1(x^{(i)}), \ldots, \ell_{n_{k(x^{(i)}, \delta)}}(x^{(i)})\right):i\in A\subseteq \N\right\}.
\]
Then, for all $\varepsilon>0$, where to shorten notation we have set $n_k:=n_{k(x, \delta)}$, we have that
\begin{eqnarray*}
\mathcal{H}_\delta^{t(v)+vs^{-1}+\varepsilon}\left(\mathcal{L}^*(s)\right)&\leq& \sum_{C_i\in \mathcal{A}_\delta}|C_i|^{t(v)+vs^{-1}+\varepsilon}\\
&=&\sum_{i\in A}\left(a_{\ell_1(x^{(i)})}\ldots a_{\ell_{n_k}(x^{(i)})}\right)^{t(v)+vs^{-1}+\varepsilon}\\
&=&\sum_{i\in A}\exp\left(S_{n_k}\phi(x^{(i)})(t(v)+vs^{-1}+\varepsilon)\right)\\
&\leq& \sum_{i\in A}\exp\left(S_{n_k}\phi(x^{(i)})\left(t(v)+v\frac{S_{n_k}\psi(x^{(i)})}{S_{n_k}\phi(x^{(i)})}+\frac{\varepsilon}{2}\right)\right)\\&\leq&
\sum_{n\in\N}\sum_{\ell_1, \ldots, \ell_n\in \N^n}\exp \sup_{y\in C_\alpha(\ell_1, \ldots, \ell_{n})}\left\{S_n\left(\left(t(v)+\frac\varepsilon2\right)\phi+v\psi\right)(y)\right\}.
\end{eqnarray*}
Recalling that the free-energy function $t$ is defined in terms of the pressure function $\mathcal{P}(t\phi+v\psi):=\log\sum_{n=1}^\infty a_n^t\exp(-vn)$ and that $\mathcal{P}$ is strictly decreasing as a function of $t$, from the definition of $t(v)$ it follows that $\mathcal{P}((t(v)+\varepsilon/2)\phi+v\psi)=\eta<0$. Consequently, for arbitrarily small $\delta$, we have that
\[
\mathcal{H}_\delta^{t(v)+vs^{-1}+\varepsilon}\left(\mathcal{L}^*(s)\right)\leq \sum_{n\in\N}e^{n\eta}<\infty,
\]
which is summable since $\eta<0$. Therefore, for every $\varepsilon>0$ and every $v\in \R$, we have that $\dim_{\mathrm{H}}\left(\mathcal{L}^*(s)\right)\leq t(v)+vs^{-1}+\varepsilon$. Finally, then,  we obtain  that \[
\dim_{\mathrm{H}}\left(\mathcal{L}^*(s)\right)\leq \dim_{\mathrm{H}}\left(\mathcal{L}(s)\right).
\]

Now, for the proof of part (b), first notice that since $\mathcal{L}(s_0, s_1)\subseteq\mathcal{L}^*(s_1)$ and $\dim_{\mathrm{H}}\left(\mathcal{L}^*(s_1)\right)=\dim_{\mathrm{H}}\left(\mathcal{L}(s_1)\right)$, it is clear that
\[
\dim_{\mathrm{H}}\left(\mathcal{L}(s_0, s_1)\right)\leq\dim_{\mathrm{H}}\left(\mathcal{L}(s_1)\right).
\]
To obtain the lower bound, where we denote by $C_n(x)$ the $n$-th level cylinder set containing the point $x$, it suffices to show (by, for instance, \cite[Proposition 2.3~(a)]{Fal2}), that there exists a finite measure $\mu$ such that
\begin{itemize}
\item[(i)] $\mu\left(\mathcal{L}\left(s_{0},s_{1}\right)\right)>0$,
\item[(ii)]  ${\displaystyle \liminf_{n\to\infty}\frac{-\log\mu\left(C_{n}(x)\right)}{S_{n}\phi(x)}\geq\dim_{H}\left(\mathcal{L}\left(s_{1}\right)\right)}$,
for all $x$ in a subset of $\mathcal{L}\left(s_{0},s_{1}\right)$ of positive $\mu$-measure.
\end{itemize}
In order to construct such a measure $\mu$, first note that it was shown in the proof of Theorem 3 in \cite{KMS} that for every $u<1$, there exists $v(u)$ such that \begin{eqnarray}\label{..}
\sum_{n=1}^\infty a_n^u \exp({-nv(u)})=1.
\end{eqnarray}
Therefore, for $s_0$ and $s_1$ we can find corresponding Bernoulli measures $\mathbb{P}_{s_0}$ and $\mathbb{P}_{s_1}$ which are defined by the probability vectors given by $p_n(s_0):=a_n^{u_{s_0}}\exp(-nv(u_{s_0}))$ and  $p_n(s_1):=a_n^{u_{s_1}}\exp(-nv(u_{s_1}))$, respectively.
Note that the relation between $u$ and $s$ is given by $-v'(u_{s_i})=s_i$, for $i=0,1$. It is then straightforward to show, by differentiating (\ref{..}) with respect to $u$, that $\int\phi\ \mathrm{d}\mathbb{P}_{s_i}/\int\psi\ \mathrm{d}\mathbb{P}_{s_i}=s_i$, again for $i=0,1$. We also have that for $\mathbb{P}_{s_i}$-a.e. $x\in \U$,
\[
\lim_{n\to\infty}\frac{1}{n}S_n\phi(x)=\int\phi\ \mathrm{d}\mathbb{P}_{s_i}\in(0, \infty)
\]
and
\[
\lim_{n\to\infty}\frac{-\log\mathbb{P}_{s_i}(C_n(x))}{S_n\phi(x)}=u_{s_i}+s_i^{-1}v(u_{s_i}).
\]
Therefore, by Egoroff's Theorem, there exists an increasing sequence of natural numbers $(m_k)_{k\geq1}$ and a sequence $(A_k)_{k\geq1}$ of Borel subsets of $\U$ such that $\mathbb{P}_{s_0}(A_{2k})\geq 1-2^{2k+1}$, $\mathbb{P}_{s_1}(A_{2k-1})\geq 1-2^{2k}$ and such that for all $x\in A_{2k}$ and all $n\geq m_{2k}$,
\begin{eqnarray}\label{eq4.2}
\left|\frac{1}{n}S_n\phi(x)-\int\phi\ \mathrm{d}\mathbb{P}_{s_0}\right|<\frac{1}{2k}\ \text{ and }\ \frac{-\log\mathbb{P}_{s_0}(C_n(x))}{S_n\phi(x)}>\dim_{\mathrm{H}}(\mathcal{L}(s_0))-\frac1{2k},
\end{eqnarray}
whereas for all $x\in A_{2k-1}$ and all $n\geq m_{2k-1}$,
\begin{eqnarray}\label{eq4.3}
\left|\frac{1}{n}S_n\phi(x)-\int\phi\ \mathrm{d}\mathbb{P}_{s_1}\right|<\frac{1}{2k-1}\ \text{ and }\ \frac{-\log\mathbb{P}_{s_1}(C_n(x))}{S_n\phi(x)}>\dim_{\mathrm{H}}(\mathcal{L}(s_1))-\frac1{2k-1}.
\end{eqnarray}

We now aim to use the sets $A_k$ to construct a set $\mathcal{M}\subset \mathcal{L}(s_0, s_1)$ by defining certain families of cylinder sets coded by increasingly long words and taking their intersection. To that end, set $n_0:=1+1/m_1$ and $n_k:=\prod_{i=1}^k(1+m_i)$, for each $k\geq1$. Then define the countable family of cylinder sets
\[
\mathcal{C}_k:=\{C_{n_{k-1}m_k}(x):x\in A_k\}, \ \text{ for each }k\geq1.
\]
Further define a second countable family of cylinder sets by setting $\mathcal{D}_1:=\mathcal{C}_1$ and setting
\[
\mathcal{D}_k:=\{DC:D\in \mathcal{D}_{k-1}, C\in \mathcal{C}_k\}, \ \text{ for each }k\geq2,
\]
where the cylinder set  $DC$ is obtained by concatenating the length $n_{k-1}$ word that defines $D$ and the length $n_{k-1}m_k$ word that defines $C$ and using this length $n_k$ word to define $DC$. Observe that if $x\in DC\in \mathcal{D}_k$, then $L_\alpha^{n_{k-1}}(x)\in C\in \mathcal{C}_k$. Finally, define
\[
\mathcal{M}:=\bigcap_{n\in\N}\bigcup_{I\in \mathcal{D}_k}I.
\]
Now, let $x\in \mathcal{D}_k$. Then,
\begin{eqnarray*}
\frac{S_{n_k}\phi(x)}{n_k}&=&\frac{S_{n_{k-1}}\phi(x)+S_{n_{k-1}m_k}\phi(L_\alpha^{n_{k-1}}(x))}{n_{k-1}(1+m_k)}\\
&=& \frac{1}{1+m_k}\cdot\frac{S_{n_{k-1}}\phi(x)}{n_{k-1}}+\frac{m_k}{1+m_k}\cdot\frac{S_{n_{k-1}m_k}\phi(L_\alpha^{n_{k-1}}(x))}{n_{k-1}m_k},
\end{eqnarray*}
and, since the latter equality is a convex combination, it follows immediately that the sequence $S_{n_k}\phi(x)/n_k$ is bounded. Therefore, where we have set $i(k):=k$ (mod 2), and recalling that $L_\alpha^{n_{k-1}}(x)\in A_k$,
\[
\lim_{k\to\infty}\left|\frac{S_{n_k}\phi(x)}{n_k}-\int\phi\ \mathrm{d}\mathbb{P}_{s_{i(k)}}\right|
=0
\]
This shows that for all $x\in \mathcal{M}$ we have two subsequences $(n_{2k})_{k\geq1}$ and $(n_{2k-1})_{k\geq1}$ along which we have that $\lim_{k\to\infty}S_{n_{2k}}\phi(x)/n_{2k}=\int\phi\ \mathrm{d}\mathbb{P}_{s_0}$ and $\lim_{k\to\infty}S_{n_{2k-1}}\phi(x)/n_{2k-1}=\int\phi\ \mathrm{d}\mathbb{P}_{s_1}$, which proves that $\mathcal{M}\in \mathcal{L}(s_0, s_1)$.

Now, using the Kolmogorov consistency theorem, define the probability measure $\mu$ on $\U$ by setting $\mu(C):=\mathbb{P}_{s_1}(C)$ for all length $n_1$ cylinder sets $C$ and, for all cylinder sets $I$ of the form $I=DC$, with $D$ of length $n_{k-1}$ and $C$ of length $n_{k-1}m_k$, setting $\mu(I):=\mu(D)\mathbb{P}_{s_{i(k)}}(C)$. Then, by construction,
\[
\mu(\mathcal{M})\geq \prod_{k\in \N}(1-2^{-k})>0.
\]
Thus, the measure $\mu$ satisfies condition (i).

To see that $\mu$ satisfies condition (ii), first note that every length $n_k$ cylinder set $C_{n_k}(x)$ for $x\in \mathcal{M}$ and $k\geq1$ can be split as follows: $C_{n_{k}}\left(x\right)=C_{n_{k-1}}\left(x\right)C_{m_{k}n_{k-1}}\left(L_\alpha^{n_{k-1}}(x)\right)$. Using this, we obtain that
\begin{eqnarray*}
\!\!\!\!\!\!\!\!\!\!\!\!\!\frac{-\log\left(\mu\left(C_{n_{k}}\left(x\right)\right)\right)}{S_{n_{k}}\phi\left(x\right)} & = & \frac{-\log\left(\mu\left(C_{n_{k-1}}\left(x\right)\right)\right)}{S_{n_{k-1}}\phi\left(x\right)}\cdot\frac{\frac{S_{n_{k-1}}\phi\left(x\right)}
{n_{k-1}}}{\frac{S_{n_{k}}\phi\left(x\right)}{n_{k}}}\cdot\frac{n_{k-1}}{n_{k}}\\
&  & +\frac{-\log\left(\mathbb{P}_{s_{i(k)}}\left(C_{m_{k}n_{k-1}}\left(L_\alpha^{n_{k-1}}(x)\right)\right)\right)}
{S_{m_{k}n_{k-1}}\phi\left(L_\alpha^{n_{k-1}}(x)\right)}\cdot{\frac{\frac{S_{m_{k}n_{k-1}}\phi\left(L_\alpha^{n_{k-1}}(x)\right)}{m_{k}n_{k-1}}}
{\frac{S_{n_{k}}\phi\left(x\right)}{n_{k}}}\frac{m_{k}n_{k-1}}{n_{k}}},\end{eqnarray*}
where the last ratio in the second term tends to 1 as $k$ tends to infinity.
This shows, similarly to the argument for condition (i), that since the above sum is a convex combination, the sequence $-\log\left(\mu\left(C_{n_{k}}\left(x\right)\right)\right)/S_{n_{k}}\phi\left(x\right)$ is also bounded. Therefore, given that $\dim_{\mathrm{H}}(\mathcal{L}(s_1))\leq \dim_{\mathrm{H}}(\mathcal{L}(s_0))$, we have that
\begin{equation}\label{eq6.3}
\liminf_{k\to\infty}\frac{-\log\left(\mu\left(C_{n_{k}}\left(x\right)\right)\right)}{S_{n_{k}}\phi\left(x\right)}\geq
\dim_{\mathrm{H}}(\mathcal{L}(s_1)).
\end{equation}
This shows that (ii) is satisfied along the subsequence $(n_k)_{k\geq1}$. To complete the proof, we must consider $n_k<n<n_{k+1}$. We will split this into two cases. Firstly, for $n_k<n<n_{k}+m_k$, one immediately verifies that
\[
\frac{-\log\left(\mu\left(C_{n}\left(x\right)\right)\right)}{S_{n}\phi\left(x\right)}\geq \frac{-\log\left(\mu\left(C_{n_k}\left(x\right)\right)\right)}{S_{n_k+m_k}\phi\left(x\right)}
=\frac{-\log\left(\mu\left(C_{n_{k}}\left(x\right)\right)\right)}
{S_{n_{k}}\phi(x)}{\cdot\frac{S_{n_{k}}\phi(x)/n_{k}}{S_{n_{k}+m_{k}}\phi(x)/(n_{k}+m_{k})}\cdot\frac{n_{k}}{n_{k}+m_{k}}},
\]
where again the last ratio on the right-hand side tends to 1 as $k$ (and therefore $n$) tends to infinity.
Secondly, if $n_{k}+m_{k}\leq n<n_{k+1}$ then $C_n(x)$ is equal to some length $n_k$ cylinder $D\in \mathcal{D}_k$ concatenated with the cylinder $C:=C_{n-n_k}(L_\alpha^{n_{k}})$, which has length at least equal to $m_k$. Since $x$ is assumed to belong to the set $\mathcal{M}$, the cylinder set $C$ contains some other cylinder set $I\in \mathcal{C}_{k+1}$. Thus,
\begin{eqnarray*}
\frac{-\log\left(\mu\left(C_{n}\left(x\right)\right)\right)}{S_{n}\phi\left(x\right)} &\geq& \frac{-\log\left(\mu\left(C_{n_{k}}\left(x\right)\right)\right)-
\log\mathbb{P}_{s_{i(k)}}\left(C_{n-n_k}\left(L_\alpha^{n_{k}}\left(x\right)\right)\right)}{S_{n}\phi\left(x\right)}\\
&\geq& \frac{-\log\left(\mu\left(C_{n_{k}}\left(x\right)\right)\right)}{S_{n_k}\phi(x)}\cdot\frac{S_{n_k}\phi(x)}{S_{n}\phi(x)}
+ \frac{-
\log\mathbb{P}_{s_{i(k)}}\left(C_{n-n_k}\left(L_\alpha^{n_{k}}\left(x\right)\right)\right)}{S_{n-n_k}\phi(L_\alpha^{n_{k}}\left(x\right))}
\cdot\frac{S_{n-n_k}\phi(L_\alpha^{n_{k}}\left(x\right))}{S_{n}\phi(x)}.
\end{eqnarray*}
Then, by (\ref{eq6.3}), for all $\varepsilon>0$ and all sufficiently large $k$ (and hence large $n$), we have that
\[
\frac{-\log\left(\mu\left(C_{n_{k}}\left(x\right)\right)\right)}{S_{n_k}\phi(x)}\geq \dim_{\mathrm{H}}(\mathcal{L}(s_1))-\varepsilon.
\]
Also, recalling that $n-n_k\geq m_k$, in light of (\ref{eq4.2}) and (\ref{eq4.3}), we obtain that
\[
\frac{-
\log\mathbb{P}_{s_{i(k)}}\left(C_{n-n_k}\left(L_\alpha^{n_{k}}\left(x\right)\right)\right)}{S_{n-n_k}\phi(L_\alpha^{n_{k}}\left(x\right))}
\geq \dim_{\mathrm{H}}(\mathcal{L}(s_{i(k)}))-\varepsilon\geq\dim_{\mathrm{H}}(\mathcal{L}(s_1))-\varepsilon.
\]
Finally, letting $\varepsilon$ tend to zero and combining (\ref{eq6.3}) with the calculations given  above for the two cases $n_k<n<n_{k}+m_k$ and $n_{k}+m_{k}\leq n<n_{k+1}$, we obtain that
\[
\liminf_{n\to\infty}\frac{-\log\left(\mu\left(C_{n}\left(x\right)\right)\right)}{S_{n}\phi(x)}\geq \dim_{\mathrm{H}}(\mathcal{L}(s_1)),
\]
which finishes the proof.
\end{proof}

\begin{rem}
The proof of the lower bound for Proposition \ref{6.4}~(b) follows along the same lines as the proof of \cite[Proposition 6.4]{mink?}, which in turn was inspired by the argument in \cite[Theorem 6.7(3)]{BS00}.
\end{rem}

We are now in a position to prove the main theorem.

\begin{proof}[Proof of Theorem \ref{mainthm}]
Firstly, that $\dim_{\mathrm{H}}(\mathcal{L}(\log2))<1$ follows immediately from the multifractal results in \cite[Theorem 3]{KMS}.

In order to prove that $\dim_{\mathrm{H}}\left(\Theta_\infty\right)=\dim_{\mathrm{H}}\left(\mathcal{L}\left(\log2\right)\right)$, it suffices to show that for every small enough $\delta>0$ we have
\[
\mathcal{L}(\log2 + \delta)\subset \Theta_\infty\subset \mathcal{L}_*(\log2).
\]
The first inclusion above is simply the statement of Proposition \ref{6.1}~(a). To demonstrate the second inclusion, let $x\in \Theta_\infty$ be given. Then, by Corollary \ref{liminfderinfcor}, we have that $\lim_{n\to\infty} 2^n\lambda(I^{(\alpha)}_n)=0$. Hence, for all $\varepsilon>0$ there exists $n_\varepsilon\in\N$ such that for all $n\geq n_\varepsilon$ we have that
\begin{eqnarray*}
2^n\lambda(I^{(\alpha)}_n)<\varepsilon& \Rightarrow & \log\left(\lambda(I^{(\alpha)}_n)\right)<-n\log2 +\log\varepsilon\\
& \Rightarrow &\frac{\log\left(\lambda(I^{(\alpha)}_n)\right)}{-n}>\log2 -\frac{\log\varepsilon}n.
\end{eqnarray*}
Therefore it follows that
\[
\liminf_{n\to\infty}\frac{S_n\phi(x)}{S_n\psi(x)}\geq \liminf_{n\to\infty}\frac{\log\left(\lambda(I^{(\alpha)}_n)\right)}{-n}\geq\log2,
\]
which shows that $x\in\mathcal{L}_*(\log2)$. Consequently, $\Theta_\infty\subset\mathcal{L}_*(\log2)$, as required.

To prove that $\dim_{\mathrm{H}}\left(\Theta_\sim\right)\leq\dim_{\mathrm{H}}\left(\mathcal{L}\left(\log2\right)\right)$, by Proposition \ref{6.4}~(a), it is enough to show that $\Theta_\sim\subset \mathcal{L}^*\left(\log2\right)$. Towards this end, let $x\in \Theta_\sim$. Hence $x\in \U\setminus \Theta_0$ and, according to Proposition \ref{limitto0}, we have that
\[
\limsup_{k\to\infty}\frac{\mu_\alpha\left(\left[r_k^{(\alpha)}(x), r_{k+1}^{(\alpha)}(x)\right]\right)}{\lambda\left(\left[r_k^{(\alpha)}(x), r_{k+1}^{(\alpha)}(x)\right]\right)}\cdot \frac{t_{\ell_{k+1}(x)}}{a_{\ell_{k+1}(x)}}=\limsup_{k\to\infty}\frac{2^{-(\ell_1(x)+\cdots +\ell_{k}(x))}}{a_{\ell_1(x)}\ldots a_{\ell_k(x)}}>0\Rightarrow\limsup_{n\to\infty}\frac{S_n\phi(x)}{S_n\psi(x)}\geq \log2.
\]
This implies that $x\in \mathcal{L}^*(\log2)$ and so $\Theta_\sim\subset \mathcal{L}^*\left(\log2\right)$.

For the lower bound, $\dim_{\mathrm{H}}\left(\Theta_\sim\right)\geq\dim_{\mathrm{H}}\left(\mathcal{L}\left(\log2\right)\right)$, recall that in Proposition \ref{6.1}~(c) we proved that
\[
  \left\{x\in\U:\liminf_{n\to\infty} \frac{\sum_{i=1}^n\log(a_{\ell_{i}(x)})}{-\sum_{i=1}^n \ell_{i}(x)}<\log2<\limsup_{n\to\infty} \frac{\sum_{i=1}^n\log(a_{\ell_{i}(x)})}{-\sum_{i=1}^n \ell_{i}(x)}\right\}\subset \Theta_\sim.
  \]
Then, due to Proposition \ref{6.4}~(b), we have that   $\dim_{\mathrm{H}}\left(\Theta_\sim\right)\geq\dim_{\mathrm{H}}\left(\mathcal{L}(s_1)\right)$ for all $s_1\in(\log2, s_+)$. This finishes the proof.
\end{proof}

\end{document}